\DeclareSymbolFont{rsfs}{U}{rsfs}{m}{n}
\DeclareSymbolFontAlphabet{\mathscrsfs}{rsfs}
\theoremstyle{definition}
\newtheorem{Def}{Definition}[section]
\newtheorem{Rmk}[Def]{Remark}
\theoremstyle{plain}
\newtheorem{Prop}[Def]{Proposition}
\newtheorem{Thm}[Def]{Theorem}
\newtheorem{Lemma}[Def]{Lemma}
\newtheorem{Cor}[Def]{Corollary}
\newcommand{\dbline}[2]{\genfrac{}{}{0pt}{}{#1}{#2}}
\newcommand{\R}{\mathbb{R}}
\newcommand{\Rd}{\R^d}
\newcommand{\Z}{\mathbb{Z}}
\newcommand{\N}{\mathbb{N}_0}
\newcommand{\Zd}{\Z^d}
\newcommand{\lp}{\ell}
\newcommand{\loc}{\text{\rm loc}}
\DeclareMathOperator{\supp}{supp}
\newcommand{\Lp}{L}
\newcommand{\leb}{\mathcal{L}}
\newcommand{\Lpl}{\Lp_\loc}
\newcommand{\ep}{T}
\newcommand{\Os}{\mathcal{O}}
\newcommand{\ha}{\dim_{\mathcal{H}}}
\newcommand{\ms}{D}
\newcommand{\dist}{\text{\rm dist}}
\newcommand{\fl}[1]{\lfloor #1 \rfloor}
\newcommand{\ce}[1]{\lceil #1 \rceil}
\newcommand{\seq}[1]{\boldsymbol{#1}}
\newcommand{\os}{\overline{s}}
\newcommand{\us}{\underline{s}}
\newcommand{\calD}{\mathscrsfs{D}}
\newcommand{\sign}{\text{\rm sign}}
\newcommand{\K}{\mathcal{K}}
\newcommand{\cs}{\ensuremath{\mathcal{S}}}
\newcommand{\ft}{\ensuremath{\mathcal{F}}}
\renewcommand{\phi}{\varphi}
\renewcommand{\epsilon}{\varepsilon}
\title{Generalized spaces of pointwise regularity:\\ To a general framework for the WLM}
\author{L. Loosveldt\footnote{Universit\'e de Li\`ege, D\'epartement de math\'ematique -- zone Polytech 1, 12 all\'ee de la D\'ecouverte, B\^at. B37, B-4000 Li\`ege} and S. Nicolay}
\begin{document}

\maketitle

\begin{abstract}
In this work we generalize the spaces $T^p_u$ introduced by Calder\'on and Zygmund using a pointwise version of conditions defining the generalized Besov spaces and give conditions binding the functions belonging to these spaces and the wavelet coefficients of such functions. Next, we propose a multifractal formalism based on the new spaces wich generalize the so-called wavelet leaders method and show that it is satisfied on a prevalent set.
\end{abstract}
\noindent \textit{Keywords}: Pointwise Regularity Spaces, Multifractal Formalisms, Wavelets

\noindent  \textit{2010 MSC}: 42C40, 26A16, 28A78
\section{Introduction}
The H\"older-regularity can be seen as a notion that fills gaps between being $n$ times continuously differentiable and $n+1$ times continuously differentiable. More precisely, a function $f$ from $\Lpl^p(\Rd)$ belongs to the space $T^p_u(x_0)$ (with $x_0\in \Rd$, $p\in [1,\infty]$ and $u>0$) if there exists a polynomial $P_{x_0}$ of degree strictly less than $u$ such that
\begin{equation}\label{eq:def hol}
 r^{-u} \|f- P_{x_0} \|_{\Lp^p(B(x_0,r))} \le C,
\end{equation}
for $r>0$, where $B(x_0,r)$ denotes the open ball centered at $x_0$ with radius $r$ (see \cite{Calderon:61}); $T^\infty_u(x_0)$ is called a H\"older space and is usually denoted by $\Lambda^u(x_0)$ \cite{Krantz:83}. These spaces are embedded and the H\"older exponent of $f$ at $x_0$ is defined as
\begin{equation}\label{eq:Hexp}
 h_\infty (x_0) := \sup \{u>0 : f\in T^\infty_u(x_0) \}.
\end{equation}

The discrete wavelet transform provides a useful tool for studying the H\"older space (for more details, see \cite{Jaffard:04}), since the condition on $f$ at $x_0$ can be transposed to a condition on some wavelet coefficients near $x_0$, the so-called wavelet leaders (see Definition~\ref{def:wleaders} with $p=\infty$). Indeed, if a function belongs to a space $\Lambda^u(x_0)$, the wavelet leaders of $x_0$ satisfy an inequality somehow similar to (\ref{eq:def hol}). Conversely, if this condition on the wavelet leaders is met, the corresponding function belongs to a space close to $T^\infty_u(x_0)$; more precisely, one has
\begin{equation}\label{eq:def hol gen}
 \theta_u^{-1}(r) \|f- P_{x_0} \|_{\Lp^\infty(B(x_0,r))} \le C,
\end{equation}
with $\theta_u(r) = r^u |\ln(r)|$. In other words, $f$ belongs to $T^\infty_u(x_0)$ up to a logarithmic correction. If such a result holds, we will say that we have a quasi-characterization of the space; let us remark that such a quasi-characterization provides an exact characterization of the H\"older-regularity, i.e.\ of the H\"older exponent $h_\infty(x_0)$.

This notion of regularity can be generalized in several ways. First, one can replace the expression $r^{-u}$ appearing in~(\ref{eq:def hol}) with a function $\theta_u(r)$ satisfying some requirements, as in inequality~(\ref{eq:def hol gen}). The space of functions satisfying (\ref{eq:def hol gen}) has been studied in \cite{Kreit:18}, where a quasi-characterization is obtained. One can also replace the H\"older space appearing in (\ref{eq:Hexp}) with a $T^p_u$ space, in order to study non-locally bounded functions. This approach has been undertaken in \cite{Jaffard:05}, where generalized wavelet leaders, called $p$-leaders, are introduced. However, this definition is not a direct generalization of the usual leaders and fails to quasi-characterize the $T^p_u(x_0)$ spaces, although they still can be used to study the corresponding generalized H\"older exponent. The first part of this paper consists in combining these two points of view, by considering the spaces of functions satisfying the condition
\begin{equation}
 \theta_u^{-1}(r) \|f- P_{x_0} \|_{\Lp^p(B(x_0,r))} \le C.
\end{equation}
Indeed, we consider an even larger class of spaces called spaces of generalized pointwise smoothness (see Definition~\ref{def:gensps}) corresponding, in some way, to a pointwise version of the generalized Besov spaces (see \cite{Loosveldt:19} and references therein). We obtain a quasi-characterization of such spaces by introducing a variant definition of the $p$-leaders that naturally extends the classical case where $p=\infty$.

The second part of this paper aims at providing a multifractal formalism suited for the spaces introduced here. A multifractal formalism is an empirical method that allows to estimate the quantity
\[
 \ha \{x_0\in \Rd: h_p(x_0)= h\},
\]
where $\ha$ denotes the Hausdorff dimension with the convention $\ha(\emptyset)=-\infty$ (see \cite{Falconer:03} for example) and $h_p(x_0)$ is the generalized H\"older obtained by replacing $T^\infty_u(x_0)$  with $T^p_u(x_0)$ in (\ref{eq:Hexp}). Usually, one requires such a method to be valid for a large class of functions. Such a multifractal formalism was first presented in \cite{Parisi:85} in the context of the analysis of fully developed turbulence velocity data. We show here that, from the prevalence point of view, almost every function belonging to a space of generalized smoothness satisfies a multifractal formalism derived from the formalism relying on the wavelet leaders; in other words, the generalized Besov spaces provide a natural framework for supporting this theory.

This paper can be seen as a generalization of the ideas and techniques employed in \cite{Jaffard:04b,Jaffard:05b,Fraysse:06,Kreit:18}.

The notations used here are rather standard. Throughout this paper, we will use Euler's notation for the derivatives, i.e.\ $D_j f$ designates the derivative of $f$ following the $j$-th component.

\section{Generalized spaces of pointwise smoothness}

\subsection{Admissible sequences}
Let us recall the notion of admissible sequence (see e.g.\ \cite{Kreit:12} and references therein).
\begin{Def}
A sequence $\seq{\sigma}=(\sigma_j)_j$ of real positive numbers is called admissible if there exists a positive constant $C$ such that
\[
 C^{-1} \sigma_j \le \sigma_{j+1} \le C \sigma_j,
\]
for any $j\in\N$.
\end{Def}
If $\seq{\sigma}$ is such a sequence, we set
\[
 \underline{\sigma}_j=\inf_{k\in\N} \frac{\sigma_{j+k}}{\sigma_k}
 \quad\text{and}\quad
 \overline{\sigma}_j=\sup_{k\in\N} \frac{\sigma_{j+k}}{\sigma_k}
\]
and define the lower and upper Boyd indices as follows,
\[
 \us(\seq{\sigma})=\lim_j \frac{\log_2 \underline{\sigma}_j}{j}
 \quad\text{and}\quad
 \os(\seq{\sigma})=\lim_j \frac{\log_2 \overline{\sigma}_j}{j}.
\]
Since $(\log \underline{\sigma}_j)_j$ is a subadditive sequence, such limits always exist. The following relations about such sequences are well known (see e.g.\ \cite{Kreit:12}). If $\seq{\sigma}$ is an admissible sequence, let $\epsilon>0$; there exists a positive constant $C$ such that
\begin{equation*}\label{eq:boyd}
 C^{-1} 2^{j(\us(\seq{\sigma}) -\epsilon)}
 \le \underline{\sigma}_j \le \frac{\sigma_{j+k}}{\sigma_k} \le \overline{\sigma}_j \le
 C 2^{j(\os(\seq{\sigma})+\epsilon)},
\end{equation*}
for any $j,k\in\N$.
In this paper, $\seq{\sigma}$ will always stand for an admissible sequence and, given $u>0$, we set $\seq{u}= (2^{ju})_j$. Of course, we have $\us(\seq{u})= \os(\seq{u}) =u$.

\subsection{Definition of the generalized spaces of pointwise smoothness}
\begin{Def}\label{def:gensps}
Let $p,q\in [1,\infty]$, $f\in \Lpl^p$ and $x_0\in \Rd$; $f$ belongs to $\ep_{p,q}^{\seq{\sigma}}(x_0)$ whenever
\[
 (\sigma_j 2^{j d/p} \sup_{|h|\le 2^{-j}} \| \Delta_h^{\fl{\os(\sigma)}+1} f\|_{\Lp^p(B_h(x_0,2^{-j}))})_j \in \lp^q,
\]
where, given $r>0$,
\[
 B_h(x_0,r) =\{ x: [x,x+(\fl{\os(\seq{\sigma})}+1)h] \subset B(x_0,r)\}.
\]
\end{Def}
It is easy to check that $\ep_{\infty,\infty}^{\seq{\sigma}}(x_0)$ is the generalized H\"older space $\Lambda^{\seq{\sigma}}(x_0)$ introduced in \cite{Kreit:18}. These spaces can also be seen as a generalization of the spaces $T^p_u(x_0)$ introduced by Calder{\'o}n and Zygmund in \cite{Calderon:61}, as Corollary~\ref{pro:rel cal} will show.

Let us give an alternative definition of $\ep_{p,q}^{\seq{\sigma}}(x_0)$.
\begin{Prop}\label{prop13}
Let $p,q\in [1,\infty]$, $f\in \Lpl^p$, $x_0\in \Rd$ and $\seq{\sigma}$ be an admissible sequence such that $\us(\seq{\sigma})>0$. We have $f\in \ep_{p,q}^{\seq{\sigma}}(x_0)$ if and only if there exists a sequence of polynomials $(P_{j,x_0})_j$ of degree less or equal to $\fl{\os(\seq{\sigma})}$ such that
\begin{equation}\label{eq:def:pol}
 (\sigma_j 2^{j d/p} \| f- P_{j,x_0} \|_{\Lp^p(B(x_0,2^{-j}))})_j \in \lp^q.
\end{equation}
\end{Prop}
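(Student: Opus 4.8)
The proof is an equivalence, so I would prove the two implications separately, and the real content is the "only if" direction. The "if" direction should be quick: if such polynomials $(P_{j,x_0})_j$ exist, I want to produce finite differences bounds. For fixed $j$, the finite difference $\Delta_h^{\fl{\os(\seq{\sigma})}+1} f$ annihilates polynomials of degree $\le \fl{\os(\seq{\sigma})}$, hence $\Delta_h^{\fl{\os(\seq{\sigma})}+1} f = \Delta_h^{\fl{\os(\seq{\sigma})}+1}(f - P_{j,x_0})$ on $B_h(x_0,2^{-j})$; expanding the finite difference operator as a sum of $2^{\fl{\os(\seq{\sigma})}+1}$ translates and using the triangle inequality in $\Lp^p$, each translate lives inside $B(x_0,2^{-j})$ by the definition of $B_h(x_0,2^{-j})$, so $\|\Delta_h^{\fl{\os(\seq{\sigma})}+1} f\|_{\Lp^p(B_h(x_0,2^{-j}))} \le 2^{\fl{\os(\seq{\sigma})}+1} \|f - P_{j,x_0}\|_{\Lp^p(B(x_0,2^{-j}))}$, and the $\ell^q$ membership transfers with a constant. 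This gives $f \in \ep_{p,q}^{\seq{\sigma}}(x_0)$.

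**The hard direction.** Conversely, suppose $f \in \ep_{p,q}^{\seq{\sigma}}(x_0)$; I must manufacture the polynomials. The natural candidate is a local polynomial approximation of $f$ on the ball $B(x_0,2^{-j})$ — e.g.\ an averaged Taylor polynomial (the Sobolev/Bramble--Hilbert "averaged Taylor polynomial" of degree $\fl{\os(\seq{\sigma})}$) associated to $f$ on that ball, call it $Q_j$. The classical Whitney-type / polynomial approximation estimate then controls $\|f - Q_j\|_{\Lp^p(B(x_0,2^{-j}))}$ by the $\Lp^p$-modulus of smoothness of order $\fl{\os(\seq{\sigma})}+1$ of $f$ at scale $2^{-j}$ on (a slightly enlarged) ball, which is exactly the quantity appearing in Definition~\ref{def:gensps} (up to the harmless enlargement of the ball, absorbed by admissibility of $\seq{\sigma}$ and the doubling of Lebesgue measure). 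So one gets
\[
 \sigma_j 2^{jd/p} \|f - Q_j\|_{\Lp^p(B(x_0,2^{-j}))} \lesssim \sigma_j 2^{jd/p} \sup_{|h|\le 2^{-j}} \|\Delta_h^{\fl{\os(\seq{\sigma})}+1} f\|_{\Lp^p(B_h(x_0,c2^{-j}))},
\]
and the right-hand side is in $\ell^q$. Choosing $P_{j,x_0} = Q_j$ finishes it, provided we can take the same modulus-of-smoothness input; a careful treatment of the enlargement constant $c$ (replacing $2^{-j}$ by $2^{-j+k}$ for a fixed $k$ shifting the sequence index) is where the hypothesis $\us(\seq{\sigma}) > 0$ is used, to guarantee $\sigma$ does not decay so fast that an index shift destroys $\ell^q$ membership — more precisely, one rewrites the enlarged-ball quantity at scale $2^{-j}$ as a constant multiple of the same-type quantity at a comparable scale and invokes the Boyd-index inequalities recalled after the definition of admissible sequences, together with the admissibility of $\seq{\sigma}$ (which makes $(\sigma_{j-k})_j \asymp (\sigma_j)_j$ for fixed $k$), to reabsorb everything.

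**Main obstacle.** The delicate point is the polynomial approximation lemma itself: one needs a version of the statement "$\|f - Q\|_{\Lp^p(B)}$ is bounded by the order-$(N+1)$ $\Lp^p$-modulus of smoothness of $f$ on a dilate of $B$" that is valid on balls (not cubes) with explicit scaling in the radius, for the full range $p \in [1,\infty]$, and crucially with the modulus measured via the restricted difference domains $B_h(x_0,r)$ used here — i.e.\ differences are only taken where the whole increment chain stays inside the ball. This is the technical heart: either cite a standard Whitney/averaged-Taylor estimate and check it adapts to the $B_h$ convention, or prove it by the usual route (write $f$ via an averaged Taylor expansion with integral remainder, bound the remainder by iterated differences, integrate). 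A secondary nuisance is the case $u = \fl{\os(\seq{\sigma})}$ being an integer or $p = \infty$, where one should double-check the degree count; but these are boundary bookkeeping, not genuine difficulties. Once the approximation lemma is in hand with the correct scaling, both implications are short.
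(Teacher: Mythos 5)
Your proposal is correct and follows essentially the same route as the paper: the easy implication is proved exactly as in the paper by noting that $\Delta_h^{\fl{\os(\seq{\sigma})}+1}$ annihilates polynomials of degree $\le\fl{\os(\seq{\sigma})}$ and that the increment chain stays in $B(x_0,2^{-j})$, while the hard implication is, in the paper too, simply delegated to the Whitney theorem (which you sketch in more detail via averaged Taylor polynomials, correctly identifying the ball/enlargement bookkeeping as the only technical point).
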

\begin{proof}
The necessity of the condition being a consequence of the Whitney Theorem, let us check the sufficiency. Let $j\in\N$; for any polynomial $P$ of degree less or equal to $n:= \fl{\os(\seq{\sigma})}$, we have, given $x,h\in \Rd$,
\[
 |\Delta_h^{n+1} f(x)|
 \le |\Delta_h^{n+1} \big( f(x) -P(x) \big)|
 \le C_n \sum_{k=0}^{n+1} | f(x+kh) - P(x+kh)|,
\]
for a constant $C_n$. Therefore, for $|h|\le 2^{-j}$ and $x\in B_h(x_0,2^{-j})$, we get
\[
 \| \Delta_h^{n+1} f\|_{\Lp^p(B_h(x_0,2^{-j}))}
 \le C_n (n+2) \| f-P \|_{\Lp^p(B(x_0,2^{-j}))},
\]
hence the conclusion.
\end{proof}

\subsection{Independence of the polynomial from the scale}
Under some additional assumptions on the admissible sequence $\seq{\sigma}$, the sequence of polynomials $(P_{j,x_0})_j$ appearing in inequality (\ref{eq:def:pol}) can be replaced by a unique polynomial $P_{x_0}$ independent from the scale $j$: $P_{x_0}= P_{j,x_0}$.

We first need some preliminary results. Let us first state a somehow standard result about inequalities on polynomials; we sketch a proof for the sake of completeness.
\begin{Lemma}
Given $x_0\in \Rd$, a radius $r>0$, $p\in(0,\infty]$ and a maximum degree $n$, there exist two constants $C,C'>0$ only depending on $p$ such that, for any polynomial $P$ of degree lower or equal to $n$,
\[
 \|D^\alpha P\|_{\Lp^p(B(x_0,r))} \le C r^{-|\alpha|} \|P\|_{\Lp^p(B(x_0,r))},
\]
for any multi-index $\alpha$
and
\[
 \sup_{x\in B(x_0,r)} |P(x)|
 \le C' r^{d/p} \| P \|_{\Lp^p(B(x_0,r))}.
\]
\end{Lemma}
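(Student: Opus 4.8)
The plan is to reduce both inequalities to the case of a fixed reference ball by an affine change of variables, and then to invoke the finite-dimensionality of the space of polynomials of degree at most $n$.

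First I would normalize: write $P(x) = Q(\frac{x-x_0}{r})$, so that $Q$ is again a polynomial of degree at most $n$ and $x \in B(x_0,r)$ corresponds to $y = \frac{x-x_0}{r} \in B(0,1)$. A substitution in the integral defining the $\Lp^p$-norm gives $\|P\|_{\Lp^p(B(x_0,r))} = r^{d/p} \|Q\|_{\Lp^p(B(0,1))}$ (with the obvious interpretation $r^{d/p}=1$ when $p=\infty$), and similarly the chain rule yields $D^\alpha P(x) = r^{-|\alpha|} (D^\alpha Q)(\frac{x-x_0}{r})$, hence $\|D^\alpha P\|_{\Lp^p(B(x_0,r))} = r^{d/p - |\alpha|} \|D^\alpha Q\|_{\Lp^p(B(0,1))}$. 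So both claimed inequalities follow once we establish them for the fixed ball $B(0,1)$, i.e.\ once we know there are constants $C, C'$ depending only on $p$, $n$, $d$ such that $\|D^\alpha Q\|_{\Lp^p(B(0,1))} \le C \|Q\|_{\Lp^p(B(0,1))}$ and $\sup_{B(0,1)}|Q| \le C' \|Q\|_{\Lp^p(B(0,1))}$ for every polynomial $Q$ of degree at most $n$.

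Next I would exploit that the space $\mathcal{P}_n$ of polynomials of degree at most $n$ in $d$ variables is finite-dimensional, so on it all norms are equivalent. Concretely, $Q \mapsto \|Q\|_{\Lp^p(B(0,1))}$ is a genuine norm on $\mathcal{P}_n$ (it is a seminorm since polynomials are continuous, and it vanishes only for $Q \equiv 0$ because a nonzero polynomial cannot vanish on an open set), and likewise $Q \mapsto \sup_{B(0,1)}|Q|$ and $Q \mapsto \|D^\alpha Q\|_{\Lp^p(B(0,1))} + \|Q\|_{\Lp^p(B(0,1))}$ are norms (or the latter is, once one adds back the lower-order term to make it definite). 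Equivalence of norms on $\mathcal{P}_n$ then gives the constants $C$ and $C'$ directly; they depend only on $p$, $n$ and the dimension $d$, as required. For the derivative bound one may also argue more explicitly by differentiation under a Cauchy-type integral or by expanding $Q$ in a fixed basis and bounding the coefficients by $\|Q\|_{\Lp^p(B(0,1))}$ (again using norm equivalence on the coefficient space), but invoking norm equivalence wholesale is cleanest for a sketch.

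**The main obstacle** is essentially bookkeeping rather than a genuine difficulty: one must be slightly careful with the endpoint $p=\infty$ throughout (where $r^{d/p}$ should read $1$), and, if one wants $C,C'$ to depend on $p$ only and not on $d$ or $n$, one should check that the scaling argument has already absorbed all the $r$-dependence so that the residual constants come from a single fixed ball. Since the statement as written allows dependence on $p$ (and implicitly on $n$, $d$, which are fixed data of the lemma), the norm-equivalence argument suffices with no real friction. I would present the change of variables carefully, then simply cite equivalence of norms on the finite-dimensional space $\mathcal{P}_n$ to conclude.
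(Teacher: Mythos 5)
Your proof is correct and reaches the same two fixed-ball estimates that the paper needs, but by a slightly different (and more self-contained) route: the paper cites the Markov inequality on a convex bounded set for the derivative bound and Sobolev's inequality for the sup bound, whereas you derive both from equivalence of norms on the finite-dimensional space $\mathcal{P}_n$ after the same rescaling to $B(0,1)$. The two arguments buy essentially the same thing; yours avoids external references at the cost of giving no explicit control on how the constant grows with $n$ (the Markov route gives the $(n+1)^2$ factor), which is irrelevant here since $n$ is fixed. Two small points. First, your own change of variables gives $\sup_{B(x_0,r)}|P| = \sup_{B(0,1)}|Q| \le C'\|Q\|_{\Lp^p(B(0,1))} = C' r^{-d/p}\|P\|_{\Lp^p(B(x_0,r))}$, i.e.\ the exponent is $-d/p$, not the $+d/p$ written in the statement; the statement (and the paper's proof, which repeats it) has a sign typo there, as one sees by testing $P\equiv 1$, and your argument in fact proves the correct version --- you should just say so explicitly rather than asserting that the stated inequality follows. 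Second, for $p\in(0,1)$ the quantity $\|\cdot\|_{\Lp^p(B(0,1))}$ is only a quasi-norm, so ``all norms are equivalent'' does not apply verbatim; but the compactness argument behind it (the map is continuous, homogeneous and positive on the unit sphere of any reference norm of $\mathcal{P}_n$, hence bounded above and below there) still goes through, so this is only a matter of phrasing.
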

\begin{proof}
For the first inequality, let us recall that the Markov inequality affirms that, given a convex bounded set $E$ of $\Rd$, there exists a constant $C_{E,p}> 0$ such that for any $n\in\N$ and $k\in \{1,\ldots,d\}$, we have
\[
 \|D_k P\|_{\Lp^p(E)} \le C_{E,p} (n+1)^2 \|P\|_{\Lp^p(E)},
\]
for any polynomial $P$ of degree less or equal to $n$. As a consequence, given $r>0$, there exists a constant $C>0$ depending on $n$ and $p$ such that, for any multi-index $\alpha$, we have
\[
 \|D^\alpha P\|_{\Lp^p(B(x_0,r))} \le C r^{-|\alpha|} \|P\|_{\Lp^p(B(x_0,r))}.
\]

That being done, using Sobolev's inequality, we can now write
\[
 \sup_{x\in B(x_0,r)} |P(x)|
 \le C' r^{d/p} \| P \|_{\Lp^p(B(x_0,r))},
\]
for a constant $C'>0$ which only depends on $n$ and $p$.
\end{proof}
\begin{Lemma}\label{lem:reste}
Let $m\in\N$, $\seq{\sigma}$ be an admissible sequence such that $\us(\seq{\sigma}^{-1})>m$ and $\seq{\epsilon} \in \lp^q$ with $q\in [1,\infty]$; there exists a sequence $\seq{\xi}\in \lp^q$ such that
\[
 \sum_{j=J}^\infty \epsilon_j 2^{jm} \sigma_j \le \xi_J 2^{Jm} \sigma_J,
\]
for all $J\in\N$.
\end{Lemma}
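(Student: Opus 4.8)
The plan is to control the tail sum $\sum_{j=J}^\infty \epsilon_j 2^{jm}\sigma_j$ by comparing it, term by term, with its leading term $\epsilon_J 2^{Jm}\sigma_J$ and the Boyd-index estimates, and then to read off the desired sequence $\seq\xi$. First I would write, for $j\ge J$,
\[
 2^{jm}\sigma_j = 2^{Jm}\sigma_J \cdot \frac{2^{(j-J)m}\sigma_j}{\sigma_J}
 = 2^{Jm}\sigma_J \cdot 2^{(j-J)m}\,\frac{\sigma_{J+(j-J)}}{\sigma_J}.
\]
By the Boyd-index inequalities recalled after the definition of admissible sequences, since $\us(\seq{\sigma}^{-1})>m$ we may fix $\epsilon>0$ so small that $m-\us(\seq{\sigma}^{-1})+\epsilon<0$; then there is a constant $C>0$ with $\sigma_{J+(j-J)}/\sigma_J\le \overline{\sigma}_{j-J}\le C\,2^{(j-J)(\os(\seq\sigma)+\epsilon)}$. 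Observe that $\os(\seq\sigma)=-\us(\seq{\sigma}^{-1})$, so this gives $2^{(j-J)m}\sigma_{J+(j-J)}/\sigma_J\le C\,2^{(j-J)\delta}$ with $\delta:=m-\us(\seq{\sigma}^{-1})+\epsilon<0$. Hence
\[
 2^{jm}\sigma_j \le C\,2^{(j-J)\delta}\,2^{Jm}\sigma_J,
 \qquad j\ge J.
\]

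Next I would define the candidate sequence. Set
\[
 \xi_J := C \sum_{j=J}^\infty \epsilon_j\, 2^{(j-J)\delta},
\]
so that $\sum_{j=J}^\infty \epsilon_j 2^{jm}\sigma_j \le \xi_J 2^{Jm}\sigma_J$ by the pointwise bound above; the defining series for $\xi_J$ converges because $\seq\epsilon\in\lp^q\subset\lp^\infty$ and $2^{\delta}<1$. It remains to check $\seq\xi\in\lp^q$. For $q=\infty$ this is immediate from $\|\seq\epsilon\|_\infty$ and the geometric sum. For $q<\infty$, I would recognize $\seq\xi$ (up to the constant $C$) as a discrete convolution $\seq\xi = C\,(\seq\epsilon * \seq{a})$ restricted to indices $\ge$ the running one, where $a_k = 2^{k\delta}$ for $k\ge 0$ and $a_k=0$ otherwise; since $\seq{a}\in\lp^1$ with $\|\seq a\|_1 = (1-2^\delta)^{-1}$, Young's inequality for sequences gives $\|\seq\xi\|_q\le C\,\|\seq a\|_1\,\|\seq\epsilon\|_q<\infty$. (Alternatively one argues directly: by Minkowski's inequality in $\lp^q$, $\|(\sum_{k\ge 0}\epsilon_{J+k}2^{k\delta})_J\|_q \le \sum_{k\ge 0}2^{k\delta}\|(\epsilon_{J+k})_J\|_q \le (1-2^\delta)^{-1}\|\seq\epsilon\|_q$.)

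I do not anticipate a genuine obstacle here; the only point requiring care is the bookkeeping with the Boyd indices, namely correctly identifying the sign condition. The hypothesis $\us(\seq{\sigma}^{-1})>m$ is exactly what is needed: it is equivalent to $\os(\seq\sigma)<-m$, i.e. to the geometric-type decay $2^{jm}\sigma_j\to 0$ at a definite exponential rate $2^{j\delta}$ with $\delta<0$, and this is what makes both the convergence of the defining series for $\xi_J$ and the $\lp^q$-boundedness (via Young/Minkowski) go through. One should also note the constant $C$ depends only on $m$, $\seq\sigma$ and the chosen $\epsilon$, not on $J$, so the claimed uniform inequality over all $J\in\N$ holds.
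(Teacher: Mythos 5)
Your proof is correct and follows essentially the same route as the paper: both arguments use the Boyd-index estimate $\sigma_j/\sigma_J\le C\,2^{(j-J)(\os(\seq{\sigma})+\epsilon)}$ with $m+\os(\seq{\sigma})+\epsilon<0$ to reduce the claim to bounding a geometrically weighted tail sum of $\seq{\epsilon}$. The only (cosmetic) difference is in the last step: you invoke Young's/Minkowski's inequality to get $\lp^q$-membership of the convolution with the summable geometric kernel, whereas the paper splits the decay factor in two and applies H\"older's inequality, taking $\xi_J$ to be the resulting $\lp^q$-weighted tail; both are standard and equivalent ways of establishing the same bound.
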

\begin{proof}
Let $\delta, \delta' >0$ be such that $-2\delta' >m + \os(\sigma)+ \delta$; given $J\in\N$, we have, using H\"older's inequality,
\begin{align*}
 \sum_{j=J}^\infty \epsilon_j 2^{jm} \sigma_j
 &\le C \sum_{j=J}^\infty \epsilon_j 2^{(j-J)(m+\os(\sigma)+\delta)} 2^{Jm} \sigma_J \\
 &\le C (\sum_{j=J}^\infty (\epsilon_j 2^{-\delta'(j-J)})^q)^{1/q} (\sum_{j=J}^\infty 2^{-p\delta'(j-J)})^{1/p} 2^{Jm} \sigma_J,
\end{align*}
where $p$ is the conjugate exponent of $q$ (with the usual modification if one of the indices is $\infty$). It remains to check that the sequence $\seq{\xi}$ defined by
\[
 \xi_j = C (\sum_{k=j}^\infty (\epsilon_j 2^{-\delta'(j-J)})^q)^{1/q}
\]
belongs to $\lp^q$, which is easy.
\end{proof}
In the same way, we can get the following result.
\begin{Lemma}\label{lem:reste2}
Let $m\in\N$, $\seq{\sigma}$ be an admissible sequence such that $\us(\seq{\sigma}^{-1})<m$ and $\seq{\epsilon} \in \lp^q$ with $q\in [1,\infty]$; there exists a sequence $\seq{\xi}\in \lp^q$ such that
\[
 \sum_{j=0}^J \epsilon_j 2^{jm} \sigma_j \le \xi_J 2^{Jm} \sigma_J,
\]
for all $J\in\N$.
\end{Lemma}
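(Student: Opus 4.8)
The plan is to run the proof of Lemma~\ref{lem:reste} in the opposite direction, exchanging the roles of the tail $\sum_{j\ge J}$ and the head $\sum_{j=0}^J$. There one bounded $\sigma_j$ in terms of $\sigma_J$ for $j\ge J$ through the upper Boyd index of $\seq{\sigma}$; here, for $0\le j\le J$, we bound $\sigma_j$ in terms of $\sigma_J$ through the \emph{lower} one, via $\sigma_J/\sigma_j=\sigma_{j+(J-j)}/\sigma_j\ge\underline{\sigma}_{J-j}$. Thus, fixing $\delta>0$, there is a constant $C=C(\delta,\seq{\sigma})$ with $\sigma_j\le C\,2^{(j-J)(\us(\seq{\sigma})-\delta)}\sigma_J$ for $0\le j\le J$, whence
\[
 2^{jm}\sigma_j\le C\,2^{(j-J)(m+\us(\seq{\sigma})-\delta)}\,2^{Jm}\sigma_J\qquad(0\le j\le J).
\]
Because $m+\us(\seq{\sigma})>0$ — which is the hypothesis read on the reciprocal sequence, $\os(\seq{\sigma}^{-1})=-\us(\seq{\sigma})<m$, mirroring the condition $\us(\seq{\sigma}^{-1})>m$ of Lemma~\ref{lem:reste} — we may fix $\delta,\delta'>0$ with $2\delta'<m+\us(\seq{\sigma})-\delta$; since $j-J\le0$ this gives $2^{jm}\sigma_j\le C\,2^{-2\delta'(J-j)}\,2^{Jm}\sigma_J$ for $0\le j\le J$.

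From here the argument is word for word that of Lemma~\ref{lem:reste}. Summing over $j\in\{0,\dots,J\}$, splitting $2^{-2\delta'(J-j)}=2^{-\delta'(J-j)}\cdot2^{-\delta'(J-j)}$ and applying H\"older's inequality (with $p'$ the conjugate exponent of $q$ and the usual modification when $q=\infty$),
\[
 \sum_{j=0}^J\epsilon_j 2^{jm}\sigma_j\le C\Big(\sum_{j=0}^J\big(\epsilon_j 2^{-\delta'(J-j)}\big)^q\Big)^{1/q}\Big(\sum_{j=0}^J 2^{-p'\delta'(J-j)}\Big)^{1/p'}2^{Jm}\sigma_J.
\]
The second factor is at most $\big(\sum_{k\ge0}2^{-p'\delta' k}\big)^{1/p'}$, a constant independent of $J$, which we absorb into $C$; then
\[
 \xi_J:=C\Big(\sum_{j=0}^J\big(\epsilon_j 2^{-\delta'(J-j)}\big)^q\Big)^{1/q}
\]
(read as $C\sup_{0\le j\le J}\epsilon_j 2^{-\delta'(J-j)}$ when $q=\infty$) satisfies the required estimate.

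It remains to see that $\seq{\xi}\in\lp^q$. For $q<\infty$, interchanging the order of summation,
\[
 \sum_{J}\xi_J^q=C^q\sum_{j}\epsilon_j^q\sum_{J\ge j}2^{-q\delta'(J-j)}=C^q\Big(\sum_{k\ge0}2^{-q\delta' k}\Big)\sum_j\epsilon_j^q<\infty;
\]
equivalently, $\seq{\xi}^q$ is the convolution of $\seq{\epsilon}^q\in\lp^1$ with the geometric (hence $\lp^1$) sequence $(2^{-q\delta' k})_{k\ge0}$, and one concludes by Young's inequality; for $q=\infty$ one simply has $\xi_J\le C\sup_j\epsilon_j<\infty$. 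I do not expect any genuine obstacle here: the whole content is the bookkeeping that converts the tail estimate of Lemma~\ref{lem:reste} into a head estimate, the only point to watch being that it is now the lower Boyd index $m+\us(\seq{\sigma})$ of $(2^{jm}\sigma_j)_j$ (and not $m+\os(\seq{\sigma})$) that must be positive for the geometric series above to converge.
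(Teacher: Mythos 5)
Your proof is correct and is exactly the mirror-image argument the paper intends: the paper gives no proof of Lemma~\ref{lem:reste2} beyond ``in the same way'', and your head-sum version of the proof of Lemma~\ref{lem:reste} (bounding $\sigma_j/\sigma_J$ by $1/\underline{\sigma}_{J-j}$ for $j\le J$ in place of $\sigma_j/\sigma_J\le\overline{\sigma}_{j-J}$ for $j\ge J$, then H\"older and an interchange of summations for the $\lp^q$ membership of $\seq{\xi}$) is the intended one.

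One point should be made explicit rather than glossed over. You assert that $m+\us(\seq{\sigma})>0$ ``is the hypothesis read on the reciprocal sequence''; it is not. The printed hypothesis $\us(\seq{\sigma}^{-1})<m$ translates to $m+\os(\seq{\sigma})>0$ (since $\us(\seq{\sigma}^{-1})=-\os(\seq{\sigma})$), whereas your argument genuinely needs the stronger condition $\os(\seq{\sigma}^{-1})=-\us(\seq{\sigma})<m$, i.e.\ $m+\us(\seq{\sigma})>0$, because it is $\underline{\sigma}_{J-j}$, hence the \emph{lower} Boyd index, that controls the head of the sum. You are right to use the stronger condition: under the literal hypothesis the statement fails. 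Take $m=1$ and an admissible $\seq{\sigma}$ with $\os(\seq{\sigma})=0$ and $\us(\seq{\sigma})=-2$, built from long alternating blocks on which $\sigma_{j+1}/\sigma_j$ equals $1$ or $2^{-2}$; along scales $J$ ending a sufficiently long decreasing block one has $2^{J}\sigma_J\to0$ while $\sum_{j=0}^J\epsilon_j2^{j}\sigma_j\ge\epsilon_0\sigma_0>0$, so no sequence $\seq{\xi}\in\lp^q$ can satisfy the required bound. The paper itself confirms that $\os(\seq{\sigma}^{-1})<m$ is the intended hypothesis: the Remark following the lemma states the model inequality $\sum_{j=0}^J2^{jm}\sigma_j\le C2^{Jm}\sigma_J$ under exactly that condition, and the places where the lemma is invoked (e.g.\ in the proof of Theorem~\ref{pro:wav2}) do so under ``$\os(\seq{\sigma})<n+1$''. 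So the printed $\us$ is a typo for $\os$; your proof establishes the corrected statement, but you should say that you are strengthening the stated hypothesis rather than claim to be using it.
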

\begin{Rmk}
 Lemma~\ref{lem:reste} generalizes the relation $\sum_{j=J}^\infty \sigma_j \le C \sigma_J$, satisfied whenever $\us(\seq{\sigma}^{-1})>0$, while Lemma~\ref{lem:reste2} should be compared with $\sum_{j=0}^J 2^{jm} \sigma_j \le C 2^{Jm} \sigma_J$, holding for $\os(\seq{\sigma}^{-1})<m$ ($m\in\N$) (see e.g.\ \cite{Kreit:12} for more details).
\end{Rmk}

The main theorem of this section relies on the following lemma.
\begin{Lemma}
Let $p,q \in [1,\infty]$, $f\in \Lpl^p$, $x_0\in \Rd$ and $\seq{\sigma}$ be an admissible sequence such that $0\le n:= \fl{\os(\seq{\sigma})} < \us(\seq{\sigma})$. If $f$ belongs to $\ep^{\seq\sigma}_{p,q}(x_0)$, the sequence of polynomials $(P_{j,x_0})_j$ satisfying (\ref{eq:def:pol}) is such that, given a multi-index $\alpha$ for which $|\alpha|\le n$, there exists a sequence $\seq{\xi} \in \lp^q$ satisfying
\[
 2^{-|\alpha|j} \sigma_j |D^\alpha (P_{j,x_0} - P_{k,x_0}) (x_0) | \le \xi_j,
\]
whenever $j<k$.

In particular, under the same hypothesis, the sequence $(D^\alpha P_{j,x_0}(x_0))_j$ is Cauchy and its limit does not depend on the chosen sequence of polynomials satisfying (\ref{eq:def:pol}).
\end{Lemma}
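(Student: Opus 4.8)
The plan is to run a \emph{telescoping} argument over consecutive dyadic scales, estimating each one-step difference with the polynomial inequalities of the previous Lemma and absorbing the resulting tail sum by Lemma~\ref{lem:reste}. Write $\epsilon_j:=\sigma_j2^{jd/p}\,\|f-P_{j,x_0}\|_{\Lp^p(B(x_0,2^{-j}))}$, so that $\seq{\epsilon}\in\lp^q$ by~(\ref{eq:def:pol}). Fix $j<k$ and a multi-index $\alpha$ with $|\alpha|\le n$, and decompose
\[
 D^\alpha(P_{j,x_0}-P_{k,x_0})(x_0)=\sum_{i=j}^{k-1}D^\alpha Q_i(x_0),\qquad Q_i:=P_{i,x_0}-P_{i+1,x_0},
\]
each $Q_i$ being a polynomial of degree at most $n$.

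First I would estimate $Q_i$ on the ball $B(x_0,2^{-i-1})$, which is contained in $B(x_0,2^{-i})$. By the triangle inequality, the definition of $\seq{\epsilon}$, and the admissibility of $\seq{\sigma}$,
\begin{align*}
 \|Q_i\|_{\Lp^p(B(x_0,2^{-i-1}))}
 &\le\|f-P_{i,x_0}\|_{\Lp^p(B(x_0,2^{-i}))}+\|f-P_{i+1,x_0}\|_{\Lp^p(B(x_0,2^{-i-1}))}\\
 &\le\frac{\epsilon_i}{\sigma_i2^{id/p}}+\frac{\epsilon_{i+1}}{\sigma_{i+1}2^{(i+1)d/p}}
 \le C\,\frac{\epsilon_i+\epsilon_{i+1}}{\sigma_i2^{id/p}}.
\end{align*}
Applying to $Q_i$ the two inequalities of the previous Lemma (with $r=2^{-i-1}$), we get $\sup_{B(x_0,2^{-i-1})}|D^\alpha Q_i|\le C\,(2^{-i-1})^{-d/p-|\alpha|}\,\|Q_i\|_{\Lp^p(B(x_0,2^{-i-1}))}$, whence, the powers of $2^{d/p}$ cancelling exactly,
\[
 |D^\alpha Q_i(x_0)|\le C\,2^{i|\alpha|}\,\sigma_i^{-1}(\epsilon_i+\epsilon_{i+1}).
\]

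Summing over $i\ge j$ (all terms nonnegative), $|D^\alpha(P_{j,x_0}-P_{k,x_0})(x_0)|\le C\sum_{i=j}^{\infty}2^{i|\alpha|}\sigma_i^{-1}(\epsilon_i+\epsilon_{i+1})$. Now $\seq{\sigma}^{-1}=(\sigma_i^{-1})_i$ is admissible, $(\epsilon_i+\epsilon_{i+1})_i\in\lp^q$, and, since $|\alpha|\le n<\us(\seq{\sigma})$, the hypothesis of Lemma~\ref{lem:reste} for the sequence $\seq{\sigma}^{-1}$ with $m=|\alpha|$ — namely $\us((\seq{\sigma}^{-1})^{-1})=\us(\seq{\sigma})>m$ — is satisfied. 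Hence there is $\seq{\xi}\in\lp^q$ with $\sum_{i=j}^{\infty}2^{i|\alpha|}\sigma_i^{-1}(\epsilon_i+\epsilon_{i+1})\le\xi_j\,2^{j|\alpha|}\sigma_j^{-1}$, the multiplicative constant being absorbed into $\seq{\xi}$; this is exactly $2^{-|\alpha|j}\sigma_j\,|D^\alpha(P_{j,x_0}-P_{k,x_0})(x_0)|\le\xi_j$.

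For the last assertion, the displayed bound gives $|D^\alpha P_{j,x_0}(x_0)-D^\alpha P_{k,x_0}(x_0)|\le\xi_j\,2^{j|\alpha|}/\sigma_j$ for every $k>j$; since $\seq{\xi}\in\lp^q$ is bounded and, choosing $\epsilon\in(0,\us(\seq{\sigma})-|\alpha|)$, $2^{j|\alpha|}/\sigma_j\le C\,2^{j(|\alpha|-\us(\seq{\sigma})+\epsilon)}\to0$, the sequence $(D^\alpha P_{j,x_0}(x_0))_j$ is Cauchy, hence convergent. If $(\tilde P_{j,x_0})_j$ is another sequence of polynomials of degree $\le n$ satisfying~(\ref{eq:def:pol}), with associated $(\tilde\epsilon_j)_j\in\lp^q$, applying the polynomial inequalities to $P_{j,x_0}-\tilde P_{j,x_0}$ on $B(x_0,2^{-j})$ yields $|D^\alpha(P_{j,x_0}-\tilde P_{j,x_0})(x_0)|\le C\,2^{j|\alpha|}(\epsilon_j+\tilde\epsilon_j)/\sigma_j\to0$, so the two limits coincide. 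The only genuine difficulty here is bookkeeping: a direct comparison of $P_{j,x_0}$ with $P_{k,x_0}$ would produce an uncontrollable factor $2^{(k-j)d/p}$, so the telescoping over consecutive scales is essential, and one must verify that the tail sum is precisely of the form covered by Lemma~\ref{lem:reste}, which is where the assumption $n<\us(\seq{\sigma})$ is used.
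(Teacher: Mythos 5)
Your argument is correct and is essentially the paper's own proof: both estimate the one-step differences $P_{i,x_0}-P_{i+1,x_0}$ on $B(x_0,2^{-i-1})$ via the triangle inequality and the two polynomial norm-comparison inequalities, then telescope and absorb the tail with Lemma~\ref{lem:reste} (applied to $\seq{\sigma}^{-1}$ with $m=|\alpha|$, using $|\alpha|\le n<\us(\seq{\sigma})$), and establish independence of the limit by comparing the two sequences directly at each scale. Your write-up is in fact slightly more careful than the paper's (correct sign $r^{-d/p}$ in the sup-versus-$L^p$ inequality, explicit verification of the hypothesis of Lemma~\ref{lem:reste}), but the route is the same.
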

\begin{proof}
Let $\seq{\epsilon} \in \lp^q$ be such that
\[
 \sigma_j 2^{j d/p} \| f- P_{j,x_0} \|_{\Lp^p(B(x_0,2^{-j}))} \le \epsilon_j,
\]
for any $j\in \N$. Given a multi-index $\alpha$ satisfying the hypothesis and $j\in \N$, we know that there exists a constant $C>0$ such that
\begin{align*}
 \lefteqn{\| D^\alpha (P_{j,x_0} - P_{j+1,x_0}) \|_{\Lp^p(B(x_0,2^{-(j+1)}))}} & \\
 &\le C2^{|\alpha| (j+1)} \|P_{j,x_0} - P_{j+1,x_0} \|_{\Lp^p(B(x_0,2^{-(j+1)}))} \\
 &\le C2^{|\alpha| (j+1)} \|P_{j,x_0} - f \|_{\Lp^p(B(x_0,2^{-(j+1)}))} + \|f - P_{j+1,x_0} \|_{\Lp^p(B(x_0,2^{-(j+1)}))} \\
 &\le C2^{|\alpha| (j+1)} (\epsilon_j 2^{j d/p} \sigma_j^{-1} + \epsilon_{j+1} 2^{(j+1) d/p} \sigma_{j+1}^{-1} ),
\end{align*}
which implies, from what we have obtained so far,
\[
 | D^\alpha (P_{j,x_0} - P_{j+1,x_0}) (x_0) | \le C' (\epsilon_j + \epsilon_{j+1}) 2^{|\alpha| j} \sigma_j^{-1}.
\]
For $j<k$, Lemma~\ref{lem:reste} then implies
\[
 | D^\alpha (P_{j,x_0} - P_{k,x_0}) (x_0) | \le \xi_j 2^{|\alpha| j} \sigma_j^{-1},
\]
for the right sequence $\seq{\xi} \in \lp^q$.

It remains to show that the limit $\calD^\alpha f(x_0)$ of the sequence $(D^\alpha P_{j,x_0}(x_0))_j$ is independent of the peculiar choice of the sequence $(D^\alpha P_{j,x_0}(x_0))_j$; let $(Q_{j,x_0})_j$ be another sequence of polynomials satisfying (\ref{eq:def:pol}). With the same reasoning as before, we get
\[
 | D^\alpha (P_{j,x_0} - Q_{j,x_0})(x_0) | \le C 2^{|\alpha| j} \sigma_j^{-1},
\]
for $j$ large enough, which is sufficient to assert that
\[
 |D^\alpha Q_{j,x_0} (x_0) - \calD^\alpha f(x_0)|
\]
tends to zero as $j$ tends to infinity.
\end{proof}
We are now able to show the existence of the unique polynomial $P_{x_0}$ introduced in the beginning of this section.
\begin{Thm}\label{pro:uniq pol}
Let $p,q\in [1,\infty]$, $f\in \Lpl^p$, $x_0\in\Rd$ and $\seq{\sigma}$ be an admissible sequence such that $0\le n:= \fl{\os(\seq{\sigma})}< \us(\seq{\sigma})$. The following assertions are equivalent:
\begin{itemize}
 \item $f$ belongs to $\ep^{\seq{\sigma}}_{p,q}(x_0)$,
 \item there exists a unique polynomial $P_{x_0}$ of degree less or equal to $n$ such that 
 \begin{equation}\label{eq:unic P}
  (\sigma_j 2^{j d/p} \| f- P_{x_0} \|_{\Lp^p(B(x_0,2^{-j}))})_j \in \lp^q.
 \end{equation}
\end{itemize}
\end{Thm}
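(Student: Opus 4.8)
The implication ``second assertion $\Rightarrow$ first assertion'' is immediate: if such a $P_{x_0}$ exists, set $P_{j,x_0}:=P_{x_0}$ for every $j\in\N$; then $(\sigma_j2^{jd/p}\|f-P_{j,x_0}\|_{\Lp^p(B(x_0,2^{-j}))})_j\in\lp^q$, and since $\us(\seq{\sigma})>n\ge0$, Proposition~\ref{prop13} yields $f\in\ep^{\seq{\sigma}}_{p,q}(x_0)$. The content of the statement is the converse together with uniqueness, so I focus on these. Assume $f\in\ep^{\seq{\sigma}}_{p,q}(x_0)$. By Proposition~\ref{prop13} there is a sequence $(P_{j,x_0})_j$ of polynomials of degree $\le n$ satisfying (\ref{eq:def:pol}); fix $\seq{\epsilon}\in\lp^q$ with $\sigma_j2^{jd/p}\|f-P_{j,x_0}\|_{\Lp^p(B(x_0,2^{-j}))}\le\epsilon_j$. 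By the preceding lemma, for each multi-index $\alpha$ with $|\alpha|\le n$ the sequence $(D^\alpha P_{j,x_0}(x_0))_j$ converges to a value $\calD^\alpha f(x_0)$ that does not depend on the chosen sequence of polynomials; I would then set
\[
 P_{x_0}(x):=\sum_{|\alpha|\le n}\frac{\calD^\alpha f(x_0)}{\alpha!}(x-x_0)^\alpha,
\]
a polynomial of degree $\le n$ with $D^\alpha P_{x_0}(x_0)=\calD^\alpha f(x_0)$ for $|\alpha|\le n$, and check that it fulfills (\ref{eq:unic P}). Since
\[
 \|f-P_{x_0}\|_{\Lp^p(B(x_0,2^{-j}))}\le\|f-P_{j,x_0}\|_{\Lp^p(B(x_0,2^{-j}))}+\|P_{j,x_0}-P_{x_0}\|_{\Lp^p(B(x_0,2^{-j}))}
\]
and the first term already contributes $\epsilon_j$ to the left-hand side of (\ref{eq:unic P}), everything reduces to estimating the polynomial difference $R_j:=P_{j,x_0}-P_{x_0}$.

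For $R_j$, the key steps are: (i) a trivial volume bound together with the exact Taylor expansion of $R_j$ at $x_0$, which for any polynomial of degree $\le n$ gives
\[
 \|R_j\|_{\Lp^p(B(x_0,2^{-j}))}\le C\,2^{-jd/p}\sum_{|\alpha|\le n}2^{-j|\alpha|}\,|D^\alpha R_j(x_0)|;
\]
and (ii) the bound supplied by the preceding lemma, namely $|D^\alpha(P_{j,x_0}-P_{k,x_0})(x_0)|\le\xi_j2^{|\alpha|j}\sigma_j^{-1}$ for $j<k$, with $\seq{\xi}\in\lp^q$ (after replacing $\seq{\xi}$ by the maximum over the finitely many $\alpha$ with $|\alpha|\le n$). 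Since this last bound is uniform in $k$, letting $k\to\infty$ yields $|D^\alpha R_j(x_0)|\le\xi_j2^{|\alpha|j}\sigma_j^{-1}$, and substituting into (i) gives $\sigma_j2^{jd/p}\|R_j\|_{\Lp^p(B(x_0,2^{-j}))}\le C\xi_j$. Together with the contribution of the first term this proves $(\sigma_j2^{jd/p}\|f-P_{x_0}\|_{\Lp^p(B(x_0,2^{-j}))})_j\in\lp^q$.

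For uniqueness, suppose $P_{x_0}$ and $\widetilde{P}_{x_0}$ both satisfy (\ref{eq:unic P}), and put $R:=P_{x_0}-\widetilde{P}_{x_0}$, a polynomial of degree $\le n$ with $(\sigma_j2^{jd/p}\|R\|_{\Lp^p(B(x_0,2^{-j}))})_j\in\lp^q$, in particular a bounded sequence. If $R\ne0$, let $m_0\le n$ be the least order of a non-vanishing derivative of $R$ at $x_0$; rescaling $B(x_0,2^{-j})$ to the unit ball and using the equivalence of norms on the (finite-dimensional) space of polynomials of degree $\le n$, one gets $2^{jd/p}\|R\|_{\Lp^p(B(x_0,2^{-j}))}\ge c\,2^{-jm_0}$ for $j$ large, because $R(x_0+2^{-j}\,\cdot\,)$ equals $2^{-jm_0}$ times a polynomial converging in $\Lp^p(B(0,1))$, as $j\to\infty$, to the non-zero degree-$m_0$ homogeneous part of the Taylor expansion of $R$ at $x_0$. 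Hence $\sigma_j2^{jd/p}\|R\|_{\Lp^p(B(x_0,2^{-j}))}\ge c\,\sigma_j2^{-jm_0}$, which tends to $+\infty$: indeed $m_0\le n<\us(\seq{\sigma})$, so the Boyd-index estimate gives $\sigma_j2^{-jm_0}\ge C^{-1}2^{j(\us(\seq{\sigma})-m_0-\epsilon)}\to\infty$ for $\epsilon>0$ small. This contradiction forces $R=0$.

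The main obstacle is the necessity direction, more precisely the passage from the scale-dependent polynomials $(P_{j,x_0})_j$ to the single polynomial $P_{x_0}$; this is exactly what the preceding lemma is built for, the one point requiring care being that its estimate holds uniformly in $k$, so one may let $k\to\infty$. In the uniqueness part, the delicate case is $q=\infty$, where membership in $\lp^q$ only gives boundedness, so one must genuinely use that $\seq{\sigma}$ grows strictly faster than $(2^{jn})_j$.
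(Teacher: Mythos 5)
Your proof is correct and follows essentially the same route as the paper: the existence part defines $P_{x_0}$ from the limits $\calD^\alpha f(x_0)=\lim_j D^\alpha P_{j,x_0}(x_0)$ supplied by the preceding lemma and controls $\|P_{j,x_0}-P_{x_0}\|_{\Lp^p(B(x_0,2^{-j}))}$ exactly as the paper does. Your uniqueness argument (a lower bound on the rescaled lowest-order nonvanishing homogeneous part of $P_{x_0}-\widetilde{P}_{x_0}$, contradicting boundedness of the sequence in (\ref{eq:unic P})) is a mild and valid rephrasing of the paper's Calder\'on--Zygmund-style case analysis, and like it rests on the hypothesis $n<\us(\seq{\sigma})$.
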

\begin{proof}
We need to prove that the first assertion implies the second one. As $f$ belongs to $\ep^{\seq{\sigma}}_{p,q}(x_0)$, there exists a sequence of polynomials $(P_{j,x_0})_j$ of degree less or equal to $n$ such that
\[
 (\sigma_j 2^{j d/p} \| f- P_{j,x_0} \|_{\Lp^p(B(x_0,2^{-j}))})_j \in \lp^q.
\]
Given a multi-index $\alpha$ satisfying $|\alpha|\le n$, let us set
\[
 \calD^\alpha f(x_0) := \lim_j D^\alpha P_{j,x_0}(x_0)
\]
and define the polynomial
\begin{equation}\label{eq:def pol unic}
 P_{x_0} : x \mapsto \sum_{|\alpha|\le n} \calD^\alpha f(x_0) \, \frac{(x-x_0)^\alpha}{|\alpha|!}.
\end{equation}
One directly gets
\[
 \| P_{j,x_0} - P_{x_0} \|_{\Lp^p(B(x_0,2^{-j}))} \le \sum_{|\alpha|\le n} | D^\alpha P_{j,x_0} (x_0) - \calD^\alpha f(x_0) | \, 2^{-j (|\alpha|+ d/p)}.
\]
That being said, we know from the previous lemma that, given $\alpha$, there exists a sequence $\seq{\xi}^{(\alpha)} \in \lp^q$ such that
\[
 | D^\alpha P_{j,x_0}(x_0) - \calD^\alpha f (x_0) | \le \xi^{(\alpha)}_j 2^{|\alpha| j} \sigma_j^{-1}.
\]
We thus have
\[
 (\sigma_j 2^{j d/p} \| P_{j,x_0}- P_{x_0} \|_{\Lp^p(B(x_0,2^{-j}))})_j \in \lp^q,
\]
which proves the first part of the theorem.

Concerning the uniqueness of the polynomial, the idea of the proof is the same as the one given in \cite{Calderon:61} for the spaces $T^p_u(x_0)$. Let $P$ and $Q$ be two polynomials satisfying a relation of type (\ref{eq:unic P}); one directly gets $P(x_0)=Q(x_0)$. That being said, let us define
\[
 L:= \sum_{|\alpha|= m } c_\alpha (\cdot -x_0)^\alpha,
\]
 where $m$ is the lowest degree of $P-Q$, with
\[
 c_\alpha := \frac{D^\alpha (P-Q)(x_0)}{|\alpha|!}.
\]
If $m<\sup\{l\in \Z: l<\us(\seq{\sigma})\}$, one can write
\[
 \|L\|_{\Lp^1(B(x_0,1))} \le C (2^{-mj} \sigma_j^{-1} + 2^{-j}),
\]
for a constant $C$, which means $L=0$. For $m= \sup\{l\in \Z: l<\us(\seq{\sigma})\}$, we simply get $\|L\|_{L^1(B(x_0,1))} \le C 2^{-mj}\sigma_j^{-1}$, which implies $L=P-Q=0$.
\end{proof}
\begin{Rmk}\label{rem:unic pol}
 In the previous result, if $\seq{\sigma}$ is the usual sequence $\seq{u}$ with $u\in\N$, it is easy to check that the polynomial $P_{x_0}$ is unique if one requires its degree to be strictly smaller than $n$.
\end{Rmk}
\begin{Cor}\label{pro:rel cal}
Given $p\in [1,\infty]$, $u> 0$ and $x\in \Rd$, we have $T^{\seq{u}}_{p,\infty}(x_0) = T^p_u(x_0)$, where $T^p_u(x_0)$ denotes the class of functions $f\in \Lp^p$ such that there exists a polynomial $P_{x_0}$ of degree strictly less than $u$ with the property that
\[
 r^{-d/p} \| f - P_{x_0}\|_{\Lp^p(B(x_0,r))} \le C r^u,
\]
for a constant $C>0$.
\end{Cor}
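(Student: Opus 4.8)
The plan is to read the identity off Theorem~\ref{pro:uniq pol} and Remark~\ref{rem:unic pol}: these already describe $T^{\seq u}_{p,\infty}(x_0)$ by means of a single polynomial, and what is left is to match the degree constraint occurring in the definition of $T^p_u(x_0)$ and to pass from the dyadic radii $2^{-j}$ to arbitrary radii $r>0$. Throughout, set $n:=\fl{\os(\seq u)}=\fl u$, and recall that $\us(\seq u)=\os(\seq u)=u$.

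First I would make the description of $T^{\seq u}_{p,\infty}(x_0)$ explicit. If $u\notin\Z$, then $0\le n=\fl u<u=\us(\seq u)$, so Theorem~\ref{pro:uniq pol} (applied with $\seq\sigma=\seq u$ and $q=\infty$) gives: $f\in T^{\seq u}_{p,\infty}(x_0)$ if and only if there is a unique polynomial $P_{x_0}$ of degree at most $n=\fl u$ --- equivalently, of degree strictly less than $u$ --- and a constant $C>0$ such that $\|f-P_{x_0}\|_{\Lp^p(B(x_0,2^{-j}))}\le C\,2^{-j(u+d/p)}$ for all $j\in\N$. If $u\in\N$, the hypothesis $n<\us(\seq u)$ of Theorem~\ref{pro:uniq pol} fails, but Remark~\ref{rem:unic pol} yields exactly the same statement with $P_{x_0}$ required to have degree strictly smaller than $n=u$, which once more means degree strictly less than $u$. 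Hence, for every $u>0$, membership of $f$ in $T^{\seq u}_{p,\infty}(x_0)$ is equivalent to the existence of a polynomial $P_{x_0}$ of degree $<u$ and of a constant $C>0$ with $\|f-P_{x_0}\|_{\Lp^p(B(x_0,2^{-j}))}\le C\,2^{-j(u+d/p)}$ for all $j\in\N$.

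It then remains to check that this family of dyadic estimates is equivalent, up to the value of $C$, to the bound $r^{-d/p}\|f-P_{x_0}\|_{\Lp^p(B(x_0,r))}\le C\,r^u$ for $r>0$ defining $T^p_u(x_0)$ with the same polynomial. One direction is immediate by taking $r=2^{-j}$. For the converse, given $0<r\le1$ I would pick the integer $j$ with $2^{-j-1}<r\le2^{-j}$; since $B(x_0,r)\subseteq B(x_0,2^{-j})$, the dyadic bound gives $\|f-P_{x_0}\|_{\Lp^p(B(x_0,r))}\le C\,2^{-j(u+d/p)}\le C\,(2r)^{u+d/p}$, i.e.\ the desired inequality with constant $2^{u+d/p}C$; for $r\ge1$ the inequality holds after enlarging $C$, using that $f\in\Lp^p$ and that a polynomial of degree $<u$ has at most polynomial $\Lp^p$-growth on $B(x_0,r)$ of order below $u+d/p$, while $r^u\ge1$. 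This gives $T^{\seq u}_{p,\infty}(x_0)=T^p_u(x_0)$. The one point requiring care --- and the reason the argument is not a one-line appeal to Theorem~\ref{pro:uniq pol} --- is the bookkeeping on the degree of $P_{x_0}$: Theorem~\ref{pro:uniq pol} produces a polynomial of degree $\le\fl u$, which agrees with ``degree $<u$'' only when $u\notin\Z$, so the integer case must be routed through Remark~\ref{rem:unic pol}.
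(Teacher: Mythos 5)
Your treatment of the non-integer case is correct and is evidently the intended derivation: the paper states this corollary without an explicit proof, immediately after Theorem~\ref{pro:uniq pol} and Remark~\ref{rem:unic pol}, and for $u\notin\Z$ the hypothesis $0\le\fl{\os(\seq u)}<\us(\seq u)$ of the theorem is satisfied, ``degree $\le\fl{u}$'' coincides with ``degree $<u$'', and your passage from dyadic radii to arbitrary radii (including the regime $r\ge1$, handled via $f\in\Lp^p$ and the polynomial growth of $P_{x_0}$) is the right bookkeeping. Your argument for the inclusion $T^p_u(x_0)\subseteq T^{\seq u}_{p,\infty}(x_0)$ is also fine for every $u>0$: a single polynomial of degree $<u$ is in particular a constant sequence of polynomials of degree $\le\fl{u}$, so Proposition~\ref{prop13} applies.

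The gap is in the integer case, in the inclusion $T^{\seq u}_{p,\infty}(x_0)\subseteq T^p_u(x_0)$. You assert that Remark~\ref{rem:unic pol} ``yields exactly the same statement'' as Theorem~\ref{pro:uniq pol} with the degree constraint tightened to $<n$. But that remark only addresses \emph{uniqueness} of the polynomial under the tightened constraint; it does not supply the \emph{existence} half of the equivalence, and that half genuinely fails for $u\in\N$. Indeed, for integer $u$ the space $\ep^{\seq u}_{p,q}(x_0)$ is defined through differences of order $\fl{\os(\seq u)}+1=u+1$, and Proposition~\ref{prop13} only produces approximating polynomials of degree $\le u$; one cannot in general truncate these to degree $\le u-1$ while retaining the bound $2^{-j(u+d/p)}$. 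Concretely, take $d=1$, $p=\infty$, $u=1$, $x_0=0$ and $f(x)=x\log|x|$ near the origin: then $\|\Delta_h^2 f\|_{\Lp^\infty}\le C|h|$, so $f\in T^{\seq 1}_{\infty,\infty}(0)$, yet $\inf_{c\in\R}\|f-c\|_{\Lp^\infty(B(0,r))}$ behaves like $r|\log r|$ and is not $O(r)$, so $f\notin T^\infty_1(0)$; the analogous computation works for finite $p$. So your argument breaks at exactly the step you flagged as delicate --- and, to be fair, the failure points at the statement of the corollary itself, which as written for all $u>0$ collides with the classical discrepancy between the Lipschitz-type space $T^p_u$ (degree $<u$) and the $(u+1)$-difference, Zygmund-type space for integer $u$; the identity is safe only for $u\notin\Z$, or after redefining $T^p_u$ for integer $u$ through polynomials of degree $\le u$.
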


\section{Wavelet criteria}
Various function spaces can be characterized by wavelets. We combine here the approaches adopted for the classical $T^p_u$ spaces, the generalized Besov spaces and the generalized pointwise H\"older spaces to obtain a ``nearly'' characterization of the spaces $\ep^{\seq{\sigma}}_{p,q}$.

\subsection{Definitions}
Let us briefly recall some definitions and notations about wavelets (for more precisions, see e.g.\ \cite{Daubechies:92,Meyer:95,Mallat:99}). Under some general assumptions, there exist a function $\phi$ and $2^d-1$
functions $(\psi^{(i)})_{1\le i<2^d}$, called wavelets, such that
\[
 \{\phi(x-k) : k\in\Zd\}\cup\{\psi^{(i)}(2^j x-k):1\le i<2^n, k\in \Zd, j\in\N \}
\]
form an orthogonal basis of $L^2$. Any function $f\in L^2$ can be decomposed as follows,
\[
 f(x)=\sum_{k\in \Zd} C_k \phi(x-k) + \sum_{j\in \N} \sum_{k\in\Zd} \sum_{1\le i<2^d} c^{(i)}_{j,k} \psi^{(i)}(2^j x-k),
\]
where
\[
c^{(i)}_{j,k}=2^{nj}\int_{\R^n}f(x) \psi^{(i)}(2^jx-k)\, dx
\]
and
\begin{equation}\label{eq:def Ck}
 C_k=\int_{\R^n} f(x) \phi(x-k)\, dx.
\end{equation}
Let us remark that we do not choose the $L^2$ normalization for the wavelets, but rather an $L^\infty$ normalization, which is better fitted to the study of the H\"olderian regularity.

Let $\lambda^{(i)}_{j,k}$ denote the dyadic cube
\[
 \lambda^{(i)}_{j,k} := \frac{i}{2^{j+1}} + \frac{k}{2^j} + [0, \frac{1}{2^{j+1}})^d.
\]
In the sequel, we will often omit any reference to the indices $i$, $j$ and $k$ for such cubes by writing $\lambda = \lambda^{(i)}_{j,k}$. We will also index the wavelet coefficients of a function $f$ with the dyadic cubes $\lambda$ so that $c_\lambda$ will refer to the quantity $c^{(i)}_{j,k}$. The notation $\Lambda_j$ will stand for the set of dyadic cubes $\lambda$ of $\Rd$ with side length $2^{-j}$ and the unique dyadic cube from $\Lambda_j$ containing the point $x_0 \in \Rd$ will be denoted $\lambda_j(x_0)$. The set of the dyadic cubes is $\Lambda:= \cup_{j\in\N} \Lambda_j$. Two dyadic cubes $\lambda$ and $\lambda'$ are adjacent if there exists $j\in\N$ such that $\lambda, \lambda' \in \Lambda_j$ and $\dist(\lambda,\lambda')=0$. The set of the $3^d$ dyadic cubes adjacent to $\lambda$ will be denoted by $3\lambda$.

As for the wavelet-based study of the pointwise H\"older spaces, we will work with wavelet leaders \cite{Jaffard:04b}. However, as we work here with $L^p$ norms, we need to introduce a generalized version.
\begin{Def}\label{def:wleaders}
Given a dyadic cube $\lambda \in \Lambda_j$ at scale $j$, the $p$-wavelet leader of $\lambda$ ($p\in [1,\infty]$) is defined by
\[
 d^p_{\lambda} = \sup_{j'\ge j} (\sum_{\lambda'\in \Lambda_{j'}, \lambda'\subset \lambda} (2^{(j-j')d/p} |c_{\lambda'}| )^p )^{1/p}.
\]
Given $x_0 \in \Rd$, we set
\[
d^p_j(x_0) = \sup_{\lambda \in 3 \lambda_j(x_0)} d^p_\lambda.
\]
\end{Def}
\begin{Rmk}
 The definition of the wavelet leaders given here is different from the one presented in \cite{Leonarduzzi:14}. The quantities introduced here are easier to work with and naturally generalize the usual wavelet leaders $d_j(x_0)$ introduced in \cite{Jaffard:04b}, since we have $d_j(x_0)= d^\infty_j(x_0)$.
\end{Rmk}

We will need the following definition (see \cite{Meyer:97}), ensuring a minimum regularity condition for a function.
\begin{Def}
 Given $x_0\in\Rd$, a function $f$ defined on $\Rd$ belongs to the Xu space $\dot{X}^u_{p,q}(x_0)$ ($u\in\R$, $p,q\in [1,\infty]$) if there exists a constant $C_* >0$ such that
 \[
  ( \sum_{|k- 2^j x_0|< C_* 2^j} (2^{(u-d/p)j} |c_{\lambda^{(i)}_{j,k}}| )^p )^{1/p} \in \lp^q.
 \]
\end{Def}

\subsection{Characterization using compactly supported wavelets}
In this section, we consider a compactly supported wavelet basis of regularity $r > \os(\seq{\sigma}^{-1})$; such wavelets are considered in \cite{Daubechies:88}. In this context, $j_0$ is a natural number such that the support of each wavelet is contained in $B(0,2^{j_0})$. 

Let us first give a necessary condition for a function to belong to $\ep^{\seq{\sigma}}_{p,q}(x_0)$.
\begin{Thm}\label{pro:wav1}
 If $f$ belongs to the space $\ep^{\seq{\sigma}}_{p,q}(x_0)$, then
 \[
  (\sigma_j d_j^p(x_0) )_j \in \lp^q.
 \]
\end{Thm}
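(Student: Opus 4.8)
The plan is to use the characterization of $\ep^{\seq{\sigma}}_{p,q}(x_0)$ via the scale-independent polynomial $P_{x_0}$ furnished by Theorem~\ref{pro:uniq pol} (noting that the standing hypothesis $r>\os(\seq{\sigma}^{-1})$ on the wavelet regularity, together with admissibility, places us in the regime $0\le n=\fl{\os(\seq{\sigma})}<\us(\seq{\sigma})$ after the usual reductions), and then to estimate the wavelet coefficients $c_{\lambda'}$ for cubes $\lambda'$ near $x_0$ directly from the integral formula. First I would fix $J\in\N$, take a dyadic cube $\lambda\in 3\lambda_J(x_0)$, and for each $j'\ge J$ and each $\lambda'\in\Lambda_{j'}$ with $\lambda'\subset\lambda$, write $c_{\lambda'}=2^{dj'}\int f(x)\psi^{(i)}(2^{j'}x-k)\,dx$. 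Using that $\psi^{(i)}$ has enough vanishing moments (its regularity exceeds $n$, so $\int x^\alpha\psi^{(i)}=0$ for $|\alpha|\le n$), I can subtract the polynomial $P_{x_0}$ for free: $c_{\lambda'}=2^{dj'}\int (f-P_{x_0})(x)\psi^{(i)}(2^{j'}x-k)\,dx$, since $P_{x_0}$ has degree $\le n$. The support of $x\mapsto\psi^{(i)}(2^{j'}x-k)$ is contained in a ball of radius $\sim 2^{j_0-j'}$ around the center of $\lambda'$, which for $\lambda'\subset\lambda\in 3\lambda_J(x_0)$ lies inside $B(x_0, c2^{-J})$ for a fixed constant $c$ depending only on $j_0$ and $d$.

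The key step is then a Hölder estimate on this localized integral. Applying Hölder's inequality with exponents $p$ and its conjugate, and using $\|\psi^{(i)}(2^{j'}\cdot-k)\|_{L^{p'}}\le C2^{-dj'/p'}$, I get
\[
 |c_{\lambda'}| \le C\, 2^{dj'}\, 2^{-dj'/p'}\, \|f-P_{x_0}\|_{\Lp^p(B(x_0,c2^{-J}))}
 = C\, 2^{dj'/p}\, \|f-P_{x_0}\|_{\Lp^p(B(x_0,c2^{-J}))}.
\]
Now I would plug this into the definition of $d^p_\lambda$: the inner $\ell^p$ sum over $\lambda'\subset\lambda$ at level $j'$ has at most $2^{d(j'-J)}$ terms, so
\[
 \Big(\sum_{\lambda'\in\Lambda_{j'},\,\lambda'\subset\lambda}(2^{(J-j')d/p}|c_{\lambda'}|)^p\Big)^{1/p}
 \le C\, 2^{d(j'-J)/p}\,2^{(J-j')d/p}\,2^{dj'/p}\,\|f-P_{x_0}\|_{\Lp^p(B(x_0,c2^{-J}))},
\]
and the powers of $2$ combine to $2^{dJ/p}$, which is independent of $j'$; taking the supremum over $j'\ge J$ and then over $\lambda\in 3\lambda_J(x_0)$ yields
\[
 d^p_J(x_0) \le C\, 2^{dJ/p}\,\|f-P_{x_0}\|_{\Lp^p(B(x_0,c2^{-J}))}.
\]
Finally, since $\seq{\sigma}$ is admissible, $\sigma_J$ is comparable to $\sigma_{J'}$ where $2^{-J'}\sim c2^{-J}$, so multiplying by $\sigma_J$ and invoking \eqref{eq:unic P} from Theorem~\ref{pro:uniq pol} (with the ball radius $c2^{-J}$ absorbed into the constant by admissibility) gives $(\sigma_J d^p_J(x_0))_J\in\lp^q$, as desired.

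I expect the main obstacle to be bookkeeping rather than conceptual: making sure the enlargement of balls (from $2^{-J}$ to $c2^{-J}$ because of the wavelet support size $2^{j_0}$, and from $2^{-J}$ to $2^{-J'}$ when comparing with a dyadic radius) is handled cleanly, and that the admissibility of $\seq{\sigma}$ really does let one replace $\sigma_J$ by the value at the comparable scale with only a bounded multiplicative loss. One should also be slightly careful with the endpoint $p=\infty$, where the Hölder step degenerates to the sup norm and the counting of $\lambda'$ becomes a plain supremum — but this is exactly the classical wavelet-leader estimate, so it goes through verbatim. The vanishing-moment cancellation of $P_{x_0}$ is the only place the hypothesis on the wavelet regularity $r>\os(\seq{\sigma}^{-1})\ge n$ is used, and it is essential.
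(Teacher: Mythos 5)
There is a genuine gap at the summation step, and it is hidden by an arithmetic slip. Your individual coefficient bound $|c_{\lambda'}|\le C\,2^{dj'/p}\,\|f-P\|_{\Lp^p(B(x_0,c2^{-J}))}$ uses the $\Lp^p$ norm over the \emph{whole} ball of radius $\sim 2^{-J}$ for \emph{every} one of the $\sim 2^{d(j'-J)}$ cubes $\lambda'\subset\lambda$ at scale $j'$. When you then sum, the three exponents you display combine to $d(j'-J)/p+(J-j')d/p+dj'/p=dj'/p$, not $dJ/p$: the count factor $2^{d(j'-J)/p}$ and the leader weight $2^{(J-j')d/p}$ cancel each other, and the $2^{dj'/p}$ from the coefficient bound survives. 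So your estimate for the inner sum is $C\,2^{(j'-J)d/p}\,2^{dJ/p}\,\|f-P\|_{\Lp^p(B(x_0,c2^{-J}))}$, which blows up as $j'\to\infty$, and the supremum over $j'\ge J$ in the definition of $d^p_\lambda$ is not controlled. This is precisely the loss that the paper's proof is designed to avoid: instead of bounding each $c_{\lambda'}$ by the global norm and counting, it estimates the full sum $s_{n,j}=\sum_{\lambda'}|c_{\lambda'}|^p$ at once by testing $f-P$ against the single function $g_{n,j}=\sum_{\lambda'}|c_{\lambda'}|^{p-1}\sign(c_{\lambda'})\psi_{\lambda'}$ and using the bounded overlap of the wavelet supports at a fixed scale to get $\|g_{n,j}\|_{\Lp^{p'}}\le C2^{-jd/p'}s_{n,j}^{1/p'}$; this yields $s_{n,j}^{1/p}\le C2^{j'd/p}\|f-P\|_{\Lp^p(B)}$ with no extra counting factor. (Equivalently, you could salvage a term-by-term argument by keeping $\|f-P\|_{\Lp^p(\supp\psi_{\lambda'})}$ localized and summing the $p$-th powers using the finite overlap of the supports; but the global-norm-times-cardinality route you propose genuinely fails for $p<\infty$. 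For $p=\infty$ your argument is fine, as you note.)

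A secondary point: you invoke Theorem~\ref{pro:uniq pol} to get a single scale-independent polynomial $P_{x_0}$, but that theorem needs $0\le\fl{\os(\seq{\sigma})}<\us(\seq{\sigma})$, which is not implied by the wavelet-regularity hypothesis $r>\os(\seq{\sigma}^{-1})$ and is not assumed in Theorem~\ref{pro:wav1}. The paper instead works with the scale-dependent sequence $(P_j)_j$ from Proposition~\ref{prop13} and pairs the coefficients at scale $j'$ against $f-P_{n-j_1}$, i.e.\ the polynomial attached to the ball that actually contains the relevant supports; the vanishing moments of the wavelets kill any polynomial of degree $\le\fl{\os(\seq{\sigma})}$, so no uniqueness is needed. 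This part of your plan is repairable; the counting step is the one that must be replaced by a duality or bounded-overlap argument.
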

\begin{proof}
 Let $\seq{\epsilon} \in \lp^q$ and $(P_j)_j$ be a sequence of polynomial of degree less or equal to $\os(\seq{\sigma})$ such that
 \[
  \sigma_j 2^{j d/p} \| f- P_j \|_{\Lp^p(B(x_0,2^{-j}))} \le \epsilon_j,
 \]
 for all $j\in\N$. Let us set choose $j_1\in \N$ such that $2\sqrt{d} +2^{j_0} \le 2^{j_1}$ and fix $n\ge j_1$. For $\lambda^{(i)}_{j,k} \subset 3\lambda_n(x_0)$, we have
 \[
  |\frac{k}{2^j} - x_0| \le 2\sqrt{d} 2^{-n}.
 \]
 By setting
 \[
  \Lambda_{j,n} := \{\lambda_{j,k}^{(i)} \in \Lambda_j : |k- 2^j x_0| \le 2 \sqrt{d} 2^{j-n} \},
 \]
 for $\lambda\in 3\lambda_n(x_0)$, we can write
 \[
  \sum_{\lambda'\in \Lambda_j, \lambda'\subset \lambda} 2^{(n-j)d} |c_{\lambda'}|^p \le \sum_{\lambda' \in \Lambda_{j,n}} 2^{(n-j)d} |c_{\lambda'}|^p,
 \]
 whenever $p\not= \infty$. Let us set
 \[
  s_{n,j} :=   \sum_{\lambda' \in \Lambda_{j,n}} |c_{\lambda'}|^p
 \]
 and define
 \[
  g_{n,j} :=  \sum_{\lambda'\in \Lambda_{j,n}} |c_{\lambda'}|^{p-1} \sign(c_{\lambda'}) \psi_{\lambda'}.
 \]
 One easily check the the support of $g_{n,j}$ is contained in $B(x_0, 2^{j_1-n})$ and
 \[
  s_{n,j} = 2^{jd} \langle f, g_{n,j} \rangle
  = 2^{jd} \int_{B(x_0, 2^{j_1-n})} ( f(x) - P_{n-j_1}(x) ) \overline{g_{n,j}(x)} \, dx,
 \]
 so that, if we denote by $q$ the conjugate exponent of $p$,
 \[
  s_{n,j} \le 2^{jd} \| f -  P_{n-j_1} \|_{\Lp^p(B(x_0, 2^{j_1-n}))} \, \|g_{n,j} \|_{\Lp^q}.
 \]

 To estimate $\|g_{n,j} \|_{\Lp^q}$, let us remark that there exists a constant $C_*>0$ that does not depend on $\lambda$ nor the scale $j$ such that the cardinal of
 \[
  \{ \lambda' \in \Lambda_j : \supp(\psi_\lambda) \cap \supp(\psi_{\lambda'}) \not= \emptyset \}
 \]
 is bounded by $C_*$. Therefore, given $j\in\N$, we can choose a partition $E_1,\ldots, E_{C_*}$ of $\Lambda_j$ such that $\lambda',\lambda''\in E_m$ ($1\le m\le C_*$) and
 \[
 \supp(\psi_{\lambda'}) \cap \supp(\psi_{\lambda''}) \not= \emptyset
 \]
 implies $\lambda' =\lambda''$. For $p\not= 1$, we easily get
 \[
  |g_{n,j}|^q \le C_*^q \sum_{\lambda' \in \Lambda_{j,n}} |c_{\lambda'}|^p |\psi_{\lambda'}|^q
 \]
 and thus
 \begin{equation}\label{eq:wav:p1}
  \|g_{n,j} \|_{\Lp^q} \le C_* 2^{-j d/q} s_{n,j}^{1/q} \max_{1\le i< 2^d} \|\psi^{(i)}\|_{\Lp^q}.
 \end{equation}
 If $p=1$, one easily checks that one has
 \[
  \|g_{n,j} \|_{\Lp^\infty} \le C_* 2^{-j d/q} \max_{1\le i< 2^d} \|\psi^{(i)}\|_{\Lp^\infty},
 \]
 so that (\ref{eq:wav:p1}) is still satisfied in this case.
 
 That being done, since we have
 \[
  s_{n,j}^{1/p}\le C \epsilon_{n-j_1} 2^{(j-n)d/p} \sigma_n^{-1},
 \]
 for a constant $C>0$, we get
 \[
  \sum_{\lambda'\in \Lambda_j, \lambda'\subset \lambda} 2^{(n-j)d} |c_{\lambda'}|^p \le 2^{(n-j)d} s_{n,j} \le C \epsilon_{n-j_1}^p \sigma_n^{-p},
 \]
 which is sufficient to conclude in the case $p\not= \infty$.

 Finally, let us consider the case $p=\infty$. Indeed, the conclusion is straightforward since, given $\lambda \subset 3\lambda_n(x_0)$, one easily check that, using an analogous reasoning, we can write
 \[
  |c_\lambda| \le C \epsilon_{n-j_1} \sigma_n,
 \]
 for a constant $C>0$.
\end{proof}

For the sufficient condition, we need the following definition.
\begin{Def}
 Let $p,q\in [1,\infty]$, $x_0\in \Rd$ and $f$ be a function from $\Lpl^p$; if $\seq{\sigma}$ is an admissible sequence such that $2^{-j d/p} \sigma_j^{-1}$ tends to $0$ as $j$ tends to $\infty$, we says that $f$ belongs to $\ep^{\seq{\sigma}}_{p,q,\log}(x_0)$ if there exists $J\in \N$ for which
 \[
  ( \frac{2^{j d/p} \sigma_j}{\log_2(2^{-j d/p} \sigma_j^{-1})} \sup_{|h|\le 2^{-j}} \| \Delta_h^{\fl{\os(\sigma^{-1})}+1} f \|_{\Lp^p(B_h(x_0,2^{-j}))} )_{j\ge J} \in \lp^q.
 \]
\end{Def}
\begin{Thm}\label{pro:wav2}
 Let $p,q\in [1,\infty]$, $x_0\in \Rd$ and $f$ be a function from $\Lpl^p$; let also $\seq{\sigma}$ be an admissible sequence such that $2^{-j d/p} \sigma_j^{-1}$ tends to $0$ as $j$ tends to $\infty$ and $\underline{\sigma}_1 >2^{-d/p}$. If $f$ belongs to $\dot{X}^\eta_{p,q}(x_0)$ for some $\eta>0$, then
 \[
  (\sigma_j d_j^p (x_0))_j \in \lp^q
 \]
 implies $f\in \ep^{\seq{\sigma}}_{p,q,\log}(x_0)$.
\end{Thm}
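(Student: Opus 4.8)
Set $N:=\fl{\os(\seq{\sigma})}+1$, so that $\Delta_h^N$ annihilates every polynomial of degree $\le\fl{\os(\seq{\sigma})}$. The plan is to bound $\sup_{|h|\le 2^{-j}}\|\Delta_h^N f\|_{\Lp^p(B_h(x_0,2^{-j}))}$ directly from the wavelet series, with no Whitney polynomial, by splitting the scales into a coarse, an intermediate and a fine block. First I would record that, by Boyd's inequalities, $\sigma_j\le C2^{j(\os(\seq{\sigma})+\epsilon)}$ for every $\epsilon>0$, while $\underline{\sigma}_1>2^{-d/p}$ gives $\delta_0>0$ with $\sigma_{k+1}\ge 2^{\delta_0-d/p}\sigma_k$ for all $k$, hence $2^{jd/p}\sigma_j\ge c\,2^{\delta_0 j}$ and therefore $\bigl|\log_2(2^{-jd/p}\sigma_j^{-1})\bigr|=jd/p+\log_2\sigma_j\ge\tfrac{\delta_0}{2}j$ for $j$ large. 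Write $\boldsymbol{\epsilon}=(\sigma_j d^p_j(x_0))_j\in\lp^q$ and let $\boldsymbol{\delta}\in\lp^q$ witness $f\in\dot{X}^\eta_{p,q}(x_0)$, i.e.\ $\bigl(\sum_i\sum_{|k-2^{j'}x_0|<C_*2^{j'}}|c_{\lambda^{(i)}_{j',k}}|^p\bigr)^{1/p}\le 2^{(d/p-\eta)j'}\delta_{j'}$. Fix $j$ large and $|h|\le 2^{-j}$; only wavelets whose support meets $B(x_0,2^{-j})$ affect $\Delta_h^N f$ on $B_h(x_0,2^{-j})$, there are at most $M=M(j_0,N,d)$ of them at each scale $j'<j$, and for $j'\ge j$ their dyadic cubes all lie inside $3\lambda_{j-j_1}(x_0)$ for a fixed $j_1$. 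Fixing $C_\eta>(\os(\seq{\sigma})+d/p+2)/\eta$ and splitting $f$ near $x_0$ as $g_{\rm low}+g_{\rm mid}+g_{\rm high}+g_\phi$ (relevant wavelets of scales $<j$; of scales $j\le j'\le C_\eta j$; of scales $>C_\eta j$; and the finitely many relevant translates of $\phi$), one has $\Delta_h^N f=\Delta_h^N g_{\rm low}+\Delta_h^N g_{\rm mid}+\Delta_h^N g_{\rm high}+\Delta_h^N g_\phi$ on $B_h(x_0,2^{-j})$, and I would estimate the four summands.

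For $g_\phi$ and $g_{\rm low}$ the point is that, since $\Delta_h^N$ kills polynomials, one only needs the size of the differences of the wavelets: by their regularity, $\|\Delta_h^N\psi_\lambda\|_{\Lp^\infty}\le C(2^{j'-j})^{\alpha_0}$ for $\lambda\in\Lambda_{j'}$, $j'\le j$, where $\alpha_0$ is an integer with $\os(\seq{\sigma})<\alpha_0\le r$ (take the wavelets regular enough). Bounding $|c_\lambda|\le d^p_\lambda\le C\,d^p_{j'-j_1}(x_0)$ by adjacency, and using the bound $M$ on the number of relevant $\lambda$'s at each coarse scale, gives $\|\Delta_h^N g_{\rm low}\|_{\Lp^p(B_h(x_0,2^{-j}))}\le C\,2^{-\alpha_0 j}2^{-jd/p}\sum_{j'<j}2^{\alpha_0 j'}d^p_{j'-j_1}(x_0)$; multiplying by $\sigma_j2^{jd/p}$, writing $d^p_i(x_0)=\epsilon_i/\sigma_i$, and applying Lemma~\ref{lem:reste2} to the sequence $(\sigma_i^{-1})_i$ with $m=\alpha_0$ (legitimate since $\alpha_0>\os(\seq{\sigma})\ge\us(\seq{\sigma})$) produces an $\lp^q$ sequence, hence so does the further division by $\bigl|\log_2(2^{-jd/p}\sigma_j^{-1})\bigr|$. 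The term $g_\phi$, a fixed regular compactly supported function, is treated the same way and contributes $O(2^{-cj})$.

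For $g_{\rm mid}$ and $g_{\rm high}$ I would use the elementary inequality $\|\Delta_h^N g\|_{\Lp^p(B_h(x_0,2^{-j}))}\le 2^N\|g\|_{\Lp^p(B(x_0,2^{-j}))}$. With $G_{j'}:=\sum_{\lambda\in\Lambda_{j'},\,\lambda\subset 3\lambda_{j-j_1}(x_0)}c_\lambda\psi_\lambda$, the almost disjoint partition argument already used in the proof of Theorem~\ref{pro:wav1} yields $\|G_{j'}\|_{\Lp^p}\le C\,2^{-j'd/p}\bigl(\sum_{\lambda\in\Lambda_{j'},\,\lambda\subset 3\lambda_{j-j_1}(x_0)}|c_\lambda|^p\bigr)^{1/p}$, and since each cube of $3\lambda_{j-j_1}(x_0)$ has $p$-leader $\le d^p_{j-j_1}(x_0)$, the defining inequality of the leader gives $\|G_{j'}\|_{\Lp^p}\le C\,2^{j_1 d/p}2^{-jd/p}d^p_{j-j_1}(x_0)$ for every $j'\ge j$ --- a bound that does not decay in $j'$. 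Summing the at most $C_\eta j$ terms of $g_{\rm mid}$ and multiplying by $\sigma_j2^{jd/p}$ gives $\le C\,C_\eta j\,(\sigma_j/\sigma_{j-j_1})\epsilon_{j-j_1}\le C'j\,\epsilon_{j-j_1}$; since $\bigl|\log_2(2^{-jd/p}\sigma_j^{-1})\bigr|\ge\tfrac{\delta_0}{2}j$, dividing leaves $\le C''\epsilon_{j-j_1}\in\lp^q$. For $g_{\rm high}$, for $j$ large the cubes of $3\lambda_{j-j_1}(x_0)$ sit inside the Xu neighbourhood, so $\bigl(\sum_{\lambda\in\Lambda_{j'},\,\lambda\subset 3\lambda_{j-j_1}(x_0)}|c_\lambda|^p\bigr)^{1/p}\le 2^{(d/p-\eta)j'}\delta_{j'}$ and thus $\|G_{j'}\|_{\Lp^p}\le C\,2^{-\eta j'}\delta_{j'}$; summing the geometric tail and multiplying by $\sigma_j2^{jd/p}\le C\,2^{j(\os(\seq{\sigma})+d/p+\epsilon)}$ gives $\le C\,2^{-\eta C_\eta j}2^{j(\os(\seq{\sigma})+d/p+\epsilon)}\le C\,2^{-j}\in\lp^q$ by the choice of $C_\eta$.

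Summing the four estimates and taking the supremum over $|h|\le 2^{-j}$ shows $f\in\ep^{\seq{\sigma}}_{p,q,\log}(x_0)$. I expect the fine block to be the real obstacle: a wavelet leader is a supremum, not a sum, over scales, so it controls each $\|G_{j'}\|_{\Lp^p}$ only by a quantity independent of $j'$, and the infinite tail $j'\ge j$ cannot be summed by leaders alone --- this is exactly why the a priori membership $f\in\dot{X}^\eta_{p,q}(x_0)$ is imposed --- while the intermediate block unavoidably contributes a number of terms linear in $j$, which is absorbed only because the logarithmic correction $\bigl|\log_2(2^{-jd/p}\sigma_j^{-1})\bigr|$ itself grows at least linearly, which is precisely what $\underline{\sigma}_1>2^{-d/p}$ guarantees. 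The remaining points (the choice of $j_1$ and $C_\eta$, the almost disjointness partition, and the finite difference estimate on the wavelets) are routine.
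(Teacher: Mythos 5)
Your argument is correct, and it takes a genuinely different route from the paper's. The paper does not estimate $\Delta_h^{N}f$ directly: it builds an explicit Taylor polynomial $P_J=\sum_{|\alpha|\le n}\frac{(\cdot-x_0)^\alpha}{|\alpha|!}\sum_{j=-1}^{J}D^\alpha f_j(x_0)$ from the partial sums $f_j$ of the wavelet series, bounds $2^{Jd/p}\|f-P_J\|_{\Lp^p(B(x_0,2^{-J}))}$ by estimating the Taylor remainder of each block $f_j$ via $\sup|D^\alpha f_j|$ for $|\alpha|=n+1$ (using the leaders for $J_0\le j\le J$), and then passes to the finite-difference formulation through Proposition~\ref{prop13}; the case $\us(\seq{\sigma})<0$ is handled separately with $P_J=0$. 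You bypass the polynomial entirely, replacing the Taylor-remainder step by the elementary estimate $\|\Delta_h^N\psi_\lambda\|_\infty\lesssim(2^{j'}|h|)^{\alpha_0}$ at coarse scales together with Lemma~\ref{lem:reste2}, which is arguably more elementary and avoids both Proposition~\ref{prop13} and the convergence of the sums $\sum_{j\le J}D^\alpha f_j(x_0)$ (at the cost of losing the polynomial-approximation form of the conclusion, which the paper gets for free). On the fine scales the two proofs use the same mechanism (leaders for an intermediate block, the Xu hypothesis for the far tail), but the cut is placed differently: the paper cuts at $j_*(J)=\ce{|\log_2(2^{-Jd/p}\sigma_J^{-1})|/\eta}$, so the intermediate block has exactly as many terms as the logarithmic correction allows, whereas you cut at $C_\eta j$ and absorb the resulting $O(j)$ terms because $\underline{\sigma}_1>2^{-d/p}$ forces $|\log_2(2^{-jd/p}\sigma_j^{-1})|\gtrsim j$; under the standing hypotheses these are the same order of magnitude, and your closing diagnosis of why the log correction and the a priori Xu regularity are both unavoidable matches the paper's mechanism exactly. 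The only points you should still write out are the routine modifications for $p\in\{1,\infty\}$, the first few scales $j'<j_1$ in the coarse block (where $d^p_{j'-j_1}(x_0)$ is not defined and one falls back on the finiteness of the coefficients, as the paper does for $j<J_0$), and the degenerate case $\os(\seq{\sigma})<0$ where $N$ may be $0$.
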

\begin{proof}
 Let us first suppose that $\os(\seq{\sigma}) \ge 0$ and set $n:= \fl{\os(\seq{\sigma})}$. We need to define some quantities. First, choose $m\in \N$ such that $k/2^j \in B(x,r)$ implies $\lambda^{(i)}_{j,k} \subset B(x, 2^m r)$, for any $x\in \Rd$, $k\in \Zd$, $j\in \N$ and $r\ge 2^{-j}$. Let also $m'\in \N$ be such that, for any $x\in\Rd$ and any $j\in\N$, $B(x,2^{-j})$ is included in some dyadic cube of side length $2^{m'-j}$ and define $J_0 := j_0 +m +m'$. Let $C_*>0$ be such that
 \[
  ( \sum_{|k - 2^j x_0| \le C_* 2^j} ( 2^{\eta- d/p)j} | c_{\lambda^{(i)}_{j,k}}| )^p )^{1/p} \in \lp^q
 \]
 and choose a number $J_1\in \N$ for which we have $(1+2^{j_0})\le C_* 2^{J_1}$. We also need a sequence $\seq{\epsilon}\in \lp^q$ satisfying $\sigma_j d_j^p(x_0) \le \epsilon_j$, for all $j\in \N$. Finally, given $J\ge \max\{J_0,J_1\}$, define
 \[
  P_J := \sum_{|\alpha|\le n} ( \frac{(\cdot- x_0)^\alpha}{|\alpha|!} \sum_{j=-1}^J D^\alpha f_j(x_0) ),
 \]
 where
 \[
  f_{-1} := \sum_{k\in \Zd} C_k \phi_k
  \qquad\text{and}\qquad
  f_j := \sum_{\lambda \in \Lambda_j} c_\lambda \psi_\lambda,
 \]
 for $j\ge 0$. We have
 \begin{align}
  \lefteqn{2^{J d/p} \| f- P_J \|_{\Lp^p(B(x_0,2^{-J}))}}& \nonumber \\
  &\le \sum_{j=-1}^J 2^{J d/p} \| f_j-  \sum_{|\alpha|\le n} \frac{(\cdot- x_0)^\alpha}{|\alpha|!} D^\alpha f_j(x_0) \|_{\Lp^p(B(x_0,2^{-J}))} \label{eq:wav1} \\
  & \quad + \sum_{j= J+1}^\infty  2^{J d/p} \| f_j \|_{\Lp^p(B(x_0,2^{-J}))}. \label{eq:wav2}
 \end{align}

 Let us fix $y\in B(x_0,2^{-J})$ and $|\alpha|= n+1$. We will first consider the case $p\not= \infty$. We have $D^\alpha \psi_{\lambda^{(i)}_{j,k}} (y) \not=0$ only if $k/2^j$ belongs to $B(y,2^{j_0-j})$; for $J_0\le j\le J$, we have
 \[
  \lambda^{(i)}_{j,k} \subset B(y,2^{m-j-j_0}) \subset \lambda_{j-J_0} (x_0),
 \]
 so that we can write, using the same reasoning as in the previous proof,
 \begin{align*}
  |D^\alpha f_j (y)|
  &\le C 2^{jp(n+1)} \sum_{\lambda \in \Lambda_j} |c_\lambda|^p |D^\alpha \psi_\lambda(y)|^p \\
  &\le C 2^{jp(n+1)} \sum_{\lambda \in \Lambda_j, \lambda\subset \lambda_{j-J_0}(x_0)} |c_\lambda|^p |D^\alpha \psi_\lambda(y)|^p \\
  &\le C 2^{jp(n+1)} \epsilon_{j-J_0}^p \sigma_j^{-p},
 \end{align*}
 since $\seq{\sigma}$ is an admissible sequence. Moreover, as the wavelet coefficients are finite 
 and there exists a constant $C_d$, which only depends on $d$, such that
 \[
  \# \{k\in \Z^d : k\in B(y, 2^{j_0}) \} \le C_d, \quad
  \# \{ k\in\Zd : k/2^j \in B(y,2^{j_0-j}) \} \le C_d,
 \]
 we also have
 \[
  | D^\alpha f_j (y)|^p \le C 2^{jp(n+1)} \sigma_j^{-p},
 \]
 for all $j\in \{-1,\ldots, J_0-1\}$. As a consequence, we can write, for any $j\in \{-1,\ldots, J\}$,
 \[
  \| f_j - \sum_{|\alpha|\le n} \frac{(\cdot- x_0)^\alpha}{|\alpha|!} D^\alpha f_j (x_0) \|_{\Lp^p(B(x_0,2^{-J}))}
  \le \theta_j 2^{-J(n+1+d/p)} 2^{j(n+1)} \sigma_j^{-1},
 \]
 for some sequence $\seq{\theta}\in \lp^q$. A similar reasoning gives the same inequality for $p=\infty$. Now, since $\os(\seq{\sigma}) <n+1$, (\ref{eq:wav1}) is upperbounded by
 \[
  C' 2^{-J(n+1)} \sum_{j=-1}^J \theta_j 2^{j(n+1)} \sigma_j^{-1}
  \le C' \xi_J \sigma_J^{-1},
 \]
 for some constant $C'>0$, where the sequence $\seq{\xi}$ is given by Lemma~\ref{lem:reste}.

 For the second term, let us fix $j\ge J+1$ and $p\not=\infty$ to define
 \[
  \Lambda_{j,J} := \{ \lambda^{(i)}_{j,k} \in \Lambda_j : B(k/2^j,2^{j_0}/2^j) \cap B(x_0,2^{-J}) \not=\emptyset \}.
 \]
 By proceeding as before for $x\in B(x_0, 2^{-J})$, we get
 \begin{align}
  \|f_j(x)\|_{\Lp^p(B(x_0,2^{-J}))}^p \le C \sum_{\lambda\in \Lambda_{j,J}} 2^{-dj} |c_\lambda|^p, \label{eq:wav3}
 \end{align}
 for some constant $C$, which gives
 \[
  2^{J d/p} \|f_j(x)\|_{\Lp^p(B(x_0,2^{-J}))} \le C \epsilon_{J-J_0} \sigma_J^{-1}.
 \]
 Moreover, since the coefficient $c_{\lambda^{(i)}_{j,k}}$ does not vanish in the sum~(\ref{eq:wav3}) only if the condition $|k-2^j x_0|\le C_* 2^j$ is satisfied, we also have
 \[
  \|f_j(x)\|_{\Lp^p(B(x_0,2^{-J}))}^p \le \delta_j^p 2^{-\eta pj},
 \]
 for a sequence $\seq{\delta} \in \lp^q$, as $f$ belongs to the space $\dot{X}^\eta_{p,q}(x_0)$. Let us obtain upperbounds when $p=\infty$; for $x\in B(x_0,2^{-J})$, $k/2^j \in B(x,2^{j_0-j})$ implies $\lambda^{(i)}_{j,k}\subset \lambda_{J-j_0}(x_0)$, so that we have $|c_{\lambda^{(k)}_{j,l}}|\le C \epsilon_{J-J_0} \sigma_N$. The same reasoning as before leads to
 \[
  \|f_j(x)\|_{\Lp^\infty(B(x_0,2^{-J}))} \le C \delta_j 2^{-\eta j}.
 \]

 Let us now set $j_*(J) := \ce{|\log_2(2^{-J d/p} \sigma_J^{-1})|/\eta}$ and choose $\eta$ small enough in order to insure that we have $\log_2(2^{d/p} \underline{\sigma}_1)/\eta >1$. With such a definition, we have $j_*(J)= j_*(J')$ if and only if $J=J'$ and we can write
 \begin{align*}
  \lefteqn{\sum_{j=J+1}^\infty 2^{J d/p} \| f_j \|_{\Lp^p(B(x_0,2^{-J}))}} & \\
  &= \sum_{j=J+1}^{j_*(J)} 2^{J d/p} \| f_j\|_{\Lp^p(B(x_0,2^{-J}))} + \sum_{j=j_*(J)+1}^\infty 2^{J d/p} \| f_j\|_{\Lp^p(B(x_0,2^{-J}))} \\
  &\le C \sum_{j=J+1}^{j_*(J)} \epsilon_{J-J_0} \sigma_J^{-1} + C 2^{J d/p} \sum_{j=j_*(J)+1}^\infty \delta_j 2^{-\eta j} \\
  &\le C (\epsilon_{J-J_0} + \xi_{j_1(J)}) \, |\log_2(2^{-J d/p} \sigma_J^{-1})| \sigma_J^{-1},
 \end{align*}
 for $J$ large enough, where the sequence $(\xi_{j_*(J)})_J$ belongs to $\lp^q$.

 It only remains to consider the situation where $\us(\seq{\sigma})<0$. In this case, let us set $P_J=0$ whenever $J\ge \max\{J_0,J_1\}$; once again, there exists a sequence $\seq{\xi}\in \lp^q$ such that
 \[
  |f_j(y)| \le \xi_j \sigma_J^{-1},
 \]
 for $y\in B(x_0,2^{-J})$, any $J\ge \max\{J_0,J_1\}$ and any $j\in \{-1,\ldots, J\}$. As done previously, we gets
 \begin{align*}
  \lefteqn{2^{J d/p} \| f -P_J \|_{\Lp^p(B(x_0,2^{-J}))}} & \\
  &\le C \sum_{j=-1}^J 2^{J d/p} \|f_j\|_{\Lp^p(B(x_0,2^{-J}))} +  \sum_{j=J+1}^\infty 2^{J d/p} \|f_j\|_{\Lp^p(B(x_0,2^{-J}))} \\
  &\le \delta_J |\log_2 (2^{-J d/p} \sigma_J^{-1})| \sigma_J^{-1},
 \end{align*}
 with $\seq{\delta}\in \lp^q$.
\end{proof}
\begin{Rmk}
 It is well known that Theorem~\ref{pro:wav1} has no converse: the ``logarithmic correction'' appearing in Theorem~\ref{pro:wav2} is necessary in the classical case (see e.g.\ \cite{Jaffard:04b}).
\end{Rmk}

\subsection{Characterization using wavelets in the Schwarz class}
In practise, compactly supported wavelets are used most often; however, for theoretical applications, in can be handy to have similar results concerning wavelets in the Schwarz class \cite{Meyer:95}. We will thus consider such wavelets in this section.
\begin{Lemma}
 Let $p,q\in [1,\infty]$, $x_0\in \Rd$ and $\seq{\sigma}$ be an admissible sequence such that either $\os(\seq{\sigma})>-d/p$, $\os(\seq{\sigma})<0$ or $0\le n:= \fl{\os(\seq{\sigma})} < \us(\seq{\sigma})$; if $f\in \Lp^p$ belongs to $\ep^{\seq{\sigma}}_{p,q}(x_0)$, then we have
 \[
  (\sigma_j 2^{j(d-u)} \int_{\Rd\setminus B(x_0,2^{-j})} \frac{|f(x)-P(x)|}{|x_0-x|^u} \, dx)_j \in \lp^q,
 \]
 for any $u> \os(\seq{\sigma})+d$, where $P$ is the polynomial given by Theorem~\ref{pro:uniq pol}.
\end{Lemma}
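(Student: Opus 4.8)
The plan is to split the integral over $\Rd\setminus B(x_0,2^{-j})$ into dyadic annuli and control the contribution of each annulus using the membership of $f$ in $\ep^{\seq{\sigma}}_{p,q}(x_0)$ together with Theorem~\ref{pro:uniq pol}. By Theorem~\ref{pro:uniq pol} (under any of the three listed hypotheses on $\seq{\sigma}$, which guarantee both the relevant summability of $\seq{\sigma}^{-1}$-type tails and the existence of the single polynomial $P=P_{x_0}$), there is a sequence $\seq{\epsilon}\in\lp^q$ with
\[
 \sigma_k 2^{k d/p} \| f- P \|_{\Lp^p(B(x_0,2^{-k}))} \le \epsilon_k
\]
for all $k\in\N$. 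First I would write, for fixed $j$,
\[
 \int_{\Rd\setminus B(x_0,2^{-j})} \frac{|f(x)-P(x)|}{|x_0-x|^u}\,dx
 = \sum_{k=-\infty}^{j-1} \int_{B(x_0,2^{-k})\setminus B(x_0,2^{-(k+1)})} \frac{|f(x)-P(x)|}{|x_0-x|^u}\,dx,
\]
where for $k<0$ the annuli go out to infinity; on the $k$-th annulus $|x_0-x|^{-u}\le C\,2^{u(k+1)}$, so that annulus contributes at most $C\,2^{uk}\|f-P\|_{\Lp^1(B(x_0,2^{-k}))}$.

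Next I would pass from the $\Lp^1$ norm to the $\Lp^p$ norm by H\"older's inequality on the ball $B(x_0,2^{-k})$, whose volume is $C\,2^{-kd}$: this gives $\|f-P\|_{\Lp^1(B(x_0,2^{-k}))}\le C\,2^{-kd/q}\|f-P\|_{\Lp^p(B(x_0,2^{-k}))}\le C\,2^{-kd/q}\epsilon_k 2^{-kd/p}\sigma_k^{-1} = C\,\epsilon_k 2^{-kd}\sigma_k^{-1}$, with $q$ the conjugate exponent of $p$. Hence the full integral is bounded by $C\sum_{k\le j-1}\epsilon_k 2^{k(u-d)}\sigma_k^{-1}$, and therefore
\[
 \sigma_j 2^{j(d-u)} \int_{\Rd\setminus B(x_0,2^{-j})} \frac{|f(x)-P(x)|}{|x_0-x|^u}\,dx
 \le C \sum_{k\le j-1} \epsilon_k\, 2^{(k-j)(u-d)}\, \frac{\sigma_j}{\sigma_k}.
\]
Since $u>\os(\seq{\sigma})+d$, writing $u-d = m$ with $m>\os(\seq{\sigma})$ and using the Boyd-index estimate $\sigma_j/\sigma_k\le \overline{\sigma}_{j-k}\le C\,2^{(j-k)(\os(\seq{\sigma})+\delta)}$ with $\delta$ chosen so small that $\os(\seq{\sigma})+\delta<m$, the geometric factor $2^{(k-j)(m-\os(\seq{\sigma})-\delta)}$ decays in $j-k$; a discrete convolution (Young's inequality for the convolution of an $\lp^q$ sequence with a summable geometric sequence, exactly as in the proof of Lemma~\ref{lem:reste}) then yields the desired $\lp^q$ sequence $\seq{\xi}$ dominating the left-hand side. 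For the terms with $k<0$ one must also handle the tail of the polynomial $P$ and of $f$ at infinity, but since $f\in\Lp^p$ (not merely $\Lpl^p$) and $P$ is a fixed polynomial of degree $\le n$, the condition $u>n+d$ makes $\int_{|x-x_0|\ge 1}|P(x)|/|x_0-x|^u\,dx$ finite, and this finite constant is absorbed into the first, $k=0$-type, term of the estimate; alternatively one notes that $\|f-P\|_{\Lp^p(B(x_0,1))}$ already controls everything inside the unit ball and the outside contributes a single convergent integral that can be folded into $\epsilon_0$.

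The main obstacle I anticipate is bookkeeping rather than depth: one must be careful that the three alternative hypotheses on $\seq{\sigma}$ are each strong enough for Theorem~\ref{pro:uniq pol} to deliver the scale-independent polynomial $P$, and that the exponent $\delta$ in the Boyd estimate can genuinely be taken small enough — this is exactly where the strict inequality $u>\os(\seq{\sigma})+d$ (rather than $\ge$) is used. A secondary subtlety is the case $p=1$ (so $q=\infty$), where H\"older's inequality degenerates; there one simply uses $\|f-P\|_{\Lp^1(B(x_0,2^{-k}))}=\|f-P\|_{\Lp^1(B(x_0,2^{-k}))}$ directly against the $\Lp^1$ bound $\epsilon_k 2^{-kd}\sigma_k^{-1}$, and the convolution argument goes through with the $\lp^\infty$–$\lp^1$ version of Young's inequality. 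The rest is the routine geometric-series summation already carried out in Lemmas~\ref{lem:reste} and~\ref{lem:reste2}.
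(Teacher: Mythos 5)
Your proposal is correct and takes essentially the same route as the paper: both reduce the exterior integral to the dyadic sum $\sum_{k\le j} 2^{k(u-d)}\|f-P\|_{\Lp^1(B(x_0,2^{-k}))}$ (you by bounding the kernel $|x-x_0|^{-u}$ on each annulus, the paper by integrating by parts the radial mass function $\phi(r)=\int_{B(x_0,r)}|f-P|$ in spherical coordinates), then pass from $\Lp^1$ to $\Lp^p$ by H\"older on each ball, treat the region $|x-x_0|\ge 1$ separately using $f\in\Lp^p$ and the degree bound on $P$, and sum the resulting series under $u>\os(\seq{\sigma})+d$ exactly via the mechanism of Lemmas~\ref{lem:reste} and~\ref{lem:reste2}. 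I see no gap.
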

\begin{proof}
 Let us set $R:=f-P$; without loss of generality, we can assume $x_0=0$. Let us define, for $r>0$,
 \[
  \phi(r) := \int_{B(0,r)} |R(x)| \, dx;
 \]
 we know that there exists a sequence $\seq{\epsilon}\in \lp^q$ such that
 \[
  \phi(2^{-j}) \le 2^{-jd} \epsilon_j \sigma_j^{-1},
 \]
 for all $j\in\N$. Moreover, for $r\ge 1$, we have
 \[
  \phi(r) \le C r^{d(1-1/p)} \| f\|_{\Lp^p} + c r^{n+d} \le C r^{d+\os(\seq{\sigma})}.
 \]
 Using spherical coordinates, we can write
 \[
  \phi(r) = \int_0^r \psi(\rho) \, d\rho,
 \]
 with
 \[
  \psi(\rho) := \rho^{d-1} \int_0^{2\pi} \int_0^\pi \cdots \int_0^\pi |R(x(\rho,\theta_1,\ldots, \theta_{d-1}))| \, d\Omega_d,
 \]
 where $d\Omega_d$ stands for
 \[
  \sin^{d-2}(\theta_1) \cdots \sin(\theta_{d-2}) \, d\theta_1\cdots d\theta_{d-1}.
 \]
 Since, for all $r>0$, we have
 \[
  \frac{\phi(r)}{r^u} - \phi(2^{-j}) 2^{ju} = \int_{B(0,2^{-j})} \frac{|R(x)|}{|x|^u} \, dx - \int_{2^{-j}}^r \frac{u}{\rho^{u+1}} \, \phi(\rho) \, d\rho,
 \]
 we get
 \begin{align*}
  \lefteqn{\int_{B(0,r)\setminus B(0,2^{-j})} \frac{|R(x)|}{|x|^u} \, dx} & \\
  &\le \frac{\phi(r)}{r^u} + \int_1^r \frac{u}{\rho^{u+1}} \, \phi(\rho) \, d\rho + \sum_{k=1}^j \int_{2^{-k}}^{2^{1-k}} \frac{u}{\rho^{u+1}} \, \phi(\rho) \, d\rho.
 \end{align*}
 Since
 \[
  \phi(r)/r^u \le C \le C 2^{j(u-d)} 2^{-\delta j} \sigma_j^{-1},
 \]
 where $\delta>0$ has been chosen such that $\delta< u-d-\os(\seq{\sigma})$, we can write
 \[
  \int_1^r \frac{u}{\rho^{u+1}} \, \phi(\rho)\, d\rho \le C 2^{j(u-d)} 2^{-j \delta} \sigma_j^{-1}.
 \]
 Finally, as $\seq{\sigma}$ is admissible and $u>\os(\seq{\sigma})+d$, we have
 \[
  \sum_{k=1}^j \int_{2^{-k}}^{2^{1-k}} \frac{u}{\rho^{u+1}} \, \phi(\rho) \, d\rho \le 2^{j(u-d)} \xi_j \sigma_j^{-1},
 \]
 where $\seq{\xi}\in \lp^q$ is given by Lemma~\ref{lem:reste}. Putting all these informations together, we can claim that there exists a sequence $\seq{\theta}\in \lp^q$ such that the inequality
 \[
  \int_{B(0,r)\setminus B(0,2^{-j})} \frac{|R(x)|}{|x|^u} \, dx \le 2^{j(u-d)} \theta_j \sigma_j^{-1}
 \]
 holds for $r\ge 1$.
\end{proof}
\begin{Thm}
  Let $p,q\in [1,\infty]$, $x_0\in \Rd$ and $\seq{\sigma}$ be an admissible sequence such that either $\os(\seq{\sigma})>-d/p$, $\os(\seq{\sigma})<0$ or $0\le \fl{\os(\seq{\sigma})} < \us(\seq{\sigma})$; if $f\in \Lp^p$ belongs to $\ep^{\seq{\sigma}}_{p,q}(x_0)$, then we have
 \[
  (\sigma_j d_j^p(x_0))_j \in \lp^q.
 \]
\end{Thm}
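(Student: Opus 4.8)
The plan is to follow the proof of Theorem~\ref{pro:wav1}, adapting it to the fact that wavelets in the Schwartz class are not compactly supported. Compact support was used there in exactly two places --- to localise the pairing of $f$ against a dual function supported near $x_0$, and to split a single scale of $\Lambda$ into finitely many overlap classes --- and I would replace the first by the tail estimate of the previous lemma and the second by a standard single-scale bound. Normalise $x_0=0$ and let $P$ be the polynomial appearing in the previous lemma (the zero polynomial when $\os(\seq{\sigma})<0$); the three alternatives in the hypothesis on $\seq{\sigma}$ are precisely those under which $P$ is available and that lemma applies. Since every Schwartz wavelet has vanishing moments of all orders, $c_\lambda=c_\lambda(f-P)$ for every dyadic cube $\lambda$, so it suffices to bound the leaders of $R:=f-P$. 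I would fix a constant $A=A(d)\in\N$ large enough that $3\lambda_j(0)\subset B(0,2^{-j+A})$ and that the centre $k/2^{j'}$ of any cube $\lambda'=\lambda^{(i)}_{j',k}\subset 3\lambda_j(0)$ with $j'\ge j$ lies in $B(0,\tfrac12 2^{-j+A})$. From $f\in\ep^{\seq{\sigma}}_{p,q}(0)$, together with Theorem~\ref{pro:uniq pol} (and the definition directly when $\os(\seq{\sigma})<0$), one gets $\seq{\epsilon}\in\lp^q$ with $2^{ld/p}\|R\|_{\Lp^p(B(0,2^{-l}))}\le \epsilon_l\sigma_l^{-1}$ for all $l$; and, having fixed any $u>\max\{d,\os(\seq{\sigma})+d\}$, the previous lemma gives $\seq{\theta}\in\lp^q$ with $\int_{\Rd\setminus B(0,2^{-l})}|x|^{-u}|R(x)|\,dx\le 2^{l(u-d)}\theta_l\sigma_l^{-1}$.

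Then I would fix $j\ge A$, $\lambda\in 3\lambda_j(0)$, a scale $j'\ge j$, and split each coefficient $c_{\lambda'}$ ($\lambda'=\lambda^{(i)}_{j',k}\subset\lambda$) as $c_{\lambda'}=c'_{\lambda'}+c''_{\lambda'}$, with near part $c'_{\lambda'}=2^{j'd}\int_{B(0,2^{-j+A})}R\,\psi_{\lambda'}$ and far part the integral over the complement. On $\Rd\setminus B(0,2^{-j+A})$ the choice of $A$ forces $|x-k/2^{j'}|\ge\tfrac12|x|$ and $2^{j'}|x|\ge 2^A$, so the decay $|\psi^{(i)}(y)|\le C_u(1+|y|)^{-u}$ gives $|\psi_{\lambda'}(x)|\le C_u(2^{j'}|x|)^{-u}$ and hence
\[
 |c''_{\lambda'}|\le C_u\,2^{j'(d-u)}\int_{\Rd\setminus B(0,2^{-(j-A)})}\frac{|R(x)|}{|x|^{u}}\,dx\le C_u\,2^{(j-A-j')(u-d)}\,\theta_{j-A}\,\sigma_{j-A}^{-1}\le C\,\theta_{j-A}\,\sigma_j^{-1},
\]
by $u>d$, $j'\ge j$ and admissibility. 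This bound is uniform over $\lambda'\subset\lambda$, and since a cube of $\Lambda_j$ contains $2^{(j'-j)d}$ cubes of $\Lambda_{j'}$, the far part contributes at most $C\theta_{j-A}\sigma_j^{-1}$ to $d^p_\lambda$, whatever $j'$ is.

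For the near part I would set $g:=\sum_{\lambda'\in\Lambda_{j'},\,\lambda'\subset\lambda}|c'_{\lambda'}|^{p-1}\sign(c'_{\lambda'})\psi_{\lambda'}$, a finite single-scale sum, so that $\sum_{\lambda'\subset\lambda}|c'_{\lambda'}|^p=2^{j'd}\int_{B(0,2^{-j+A})}R\,g\le 2^{j'd}\|R\|_{\Lp^p(B(0,2^{-(j-A)}))}\|g\|_{\Lp^{p'}}$, with $p'$ the conjugate exponent. Instead of the finite-overlap partition of Theorem~\ref{pro:wav1}, for Schwartz wavelets I would invoke the classical single-scale estimate $\|\sum_{\lambda'\in\Lambda_{j'}}a_{\lambda'}\psi_{\lambda'}\|_{\Lp^{p'}}\le C\,2^{-j'd/p'}(\sum_{\lambda'}|a_{\lambda'}|^{p'})^{1/p'}$, which follows from their rapid decay (see e.g.\ \cite{Meyer:95}). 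This yields $\|g\|_{\Lp^{p'}}\le C\,2^{-j'd/p'}(\sum_{\lambda'\subset\lambda}|c'_{\lambda'}|^p)^{1/p'}$, hence $(\sum_{\lambda'\subset\lambda}|c'_{\lambda'}|^p)^{1/p}\le C\,2^{j'd/p}\|R\|_{\Lp^p(B(0,2^{-(j-A)}))}\le C\,2^{j'd/p}\,\epsilon_{j-A}\,2^{-jd/p}\,\sigma_j^{-1}$, again by admissibility; multiplying by the leader weight $2^{(j-j')d/p}$ cancels $2^{j'd/p}$, so the near part contributes at most $C\epsilon_{j-A}\sigma_j^{-1}$ to $d^p_\lambda$, uniformly in $j'\ge j$. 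For $p=1$ and $p=\infty$ the bound on $\|g\|$ is elementary, exactly as in Theorem~\ref{pro:wav1}.

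Taking suprema over $j'\ge j$ and over $\lambda\in 3\lambda_j(0)$ then gives $d^p_j(0)\le C(\epsilon_{j-A}+\theta_{j-A})\sigma_j^{-1}$ for $j\ge A$; the finitely many initial terms of $(\sigma_j d^p_j(0))_j$ are finite because the wavelet coefficients are, so $(\sigma_j d^p_j(0))_j$ is dominated by an $\lp^q$ sequence, which is the assertion. The only genuinely new input relative to the compactly supported case is the far-field bound, which is exactly what the previous lemma was built to give; the main obstacle will be keeping that far-field estimate uniform over $\lambda'$ --- this is what forces $A$ to be chosen so that $|x-k/2^{j'}|$ stays comparable to $|x|$ off $B(0,2^{-j+A})$ --- together with the substitution of the finite-overlap argument by the Schwartz single-scale $\Lp^{p'}$ bound.
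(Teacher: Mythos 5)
Your proof is correct and follows essentially the same route as the paper: reduce to $R=f-P$ via the vanishing moments, control the far field with the tail--integral lemma together with the rapid decay of the wavelets, and control the near field by pairing against the single-scale dual function $\sum |c_{\lambda'}|^{p-1}\sign(c_{\lambda'})\psi_{\lambda'}$ and the $\Lp^{p'}$ wavelet bound. The only (harmless) difference is that you split each coefficient into near and far parts before summing, so the far contribution is a uniform sup-times-counting bound, whereas the paper splits the pairing integral itself and then absorbs a residual factor $s_{j,n}^{1/p'}$; both yield the same estimate $d_j^p(x_0)\le C(\epsilon_{j-A}+\theta_{j-A})\sigma_j^{-1}$ and hence the $\lp^q$ conclusion.
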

\begin{proof}
 Let $\seq{\epsilon}\in \lp^q$ be such that
 \[
  \sigma_j 2^{j d/p} \|f- P_j\|_{\Lp^p(B(x_0,2^{-j}))} \le \epsilon_j,
 \]
 for any $j\in\N$, choose $j_1\in \N$ such that $2\sqrt{d}\le 2^{j_1}$ and fix $n\ge j_1+1$.

 Let us first suppose that $p\in (1,\infty)$; define
 \[
  \Lambda_{j,n} := \{ \lambda_{j,k}^{(l)} \in \Lambda_j : |k- 2^j x_0| \le \sqrt{d} 2^{j+1-n}\},
 \]
 so that $\lambda\in 3 \lambda_n(x_0)$ and $\lambda\in \Lambda_j$ implies $\lambda\in \Lambda_{j,n}$,
 \[
  s_{j,n} := \sum_{\lambda' \in \Lambda_{j,n}} |c_{\lambda'}|^p
 \]
 and
 \[
  g_{j,n} :=  \sum_{\lambda' \in \Lambda_{j,n}} |c_{\lambda'}|^{p-1} \sign(c_{\lambda'}) \psi_{\lambda'}.
 \]
 We have
 \begin{align*}
  s_{j,n}
  &= 2^{jd} \int_{B(x_0,2^{j_1-n+1})} (f(x)- P(x)) \overline{g_{j,n}(x)} \, dx \\
  &\quad + 2^{jd} \int_{\Rd\setminus B(x_0,2^{j_1-n+1})} (f(x)- P(x)) \overline{g_{j,n}(x)} \, dx.
 \end{align*}
 Using H\"older's inequality, we can write
 \begin{align*}
  \lefteqn{2^{jd} \int_{B(x_0,2^{j_1-n+1})} (f(x)- P(x)) \overline{g_{j,n}(x)} \, dx} & \\
  &\le C \epsilon_{n-j_1-1} 2^{jd} 2^{-n d/p} \| g_{j,n}\|_{\Lp^{p'}} \sigma_n^{-1},
 \end{align*}
 where $p'$ is the conjugate exponent of $p$, with
 \[
  \| g_{j,n}\|_{\Lp^{p'}} \le C 2^{-j d/p'} s_{j,n}^{1/p'},
 \]
 for a constant $C>0$, thanks to the wavelet characterization of the $\Lp^p$ spaces (see e.g.\ \cite{Meyer:95}). Now, for all $u> d+\os(\seq{\sigma})$, it is easy to check, thanks to the fast decay of the wavelets, that there exists a constant $C_{d,u}>0$ such that, for all $x\in \Rd\setminus B(x_0,2^{j_1-n+1})$,
 \[
  (\sum_{\lambda' \in \Lambda_{j,n}} |\psi_{\lambda'}|^p)^{1/p}
  \le C_{d,u}/ (2^j |x-x_0|)^u.
 \]
 Using the previous lemma, we can claim that there exists a sequence $\seq{\theta}\in \lp^q$ for which
 \[
  2^{jd} \int_{\Rd\setminus B(x_0,2^{j_1-n+1})} (f(x)- P(x)) \overline{g_{j,n}(x)} \, dx
  \le \theta_n s_{j,n}^{1/p'} 2^{(j-n)d/p} \sigma_n^{-1}.
 \]
 As a consequence, there exists a sequence $\seq{\xi}\in \lp^q$ such that
 \[
  s_{j,n}^{1/p} \le \xi_n 2^{(j-n)d/p} \sigma_n^{-1}.
 \]

 If $p=1$, keeping the same notations, we have
 \begin{align*}
  s_{j,n}
  &\le 2^{jd} \int_{B(x_0,2^{j_1-n+1})} |f(x)- P(x)| \sum_{\lambda' \in \Lambda_{j,n}} |\psi_{\lambda'}| \, dx \\
  &\quad + 2^{jd} \int_{\Rd\setminus B(x_0,2^{j_1-n+1})} |f(x)- P(x)| \sum_{\lambda' \in \Lambda_{j,n}} |\psi_{\lambda'}| \, dx.
 \end{align*}
 To bound the first integral, remark that $\sum_{\lambda' \in \Lambda_j} |\psi_{\lambda'}|$ is bounded and
 \[
  \| f-P \|_{\Lp^1(B(x_0,2^{j_1-n+1}))} \le C \epsilon_{n-j_1-1} 2^{-nd} \sigma_n^{-1}.
 \]
 The second integral can be treated as in the case $p\in (1,\infty)$.

 Finally, suppose that $p=\infty$, fix $j\ge n$ and suppose that $\lambda\in \Lambda_j$ satisfies $\lambda\in 3 \lambda_n(x_0)$. We have
 \begin{align*}
  |c_\lambda|
  &\le 2^{jd} \int_{B(x_0,2^{j_1-n+1})} |f(x)- P(x)| \, |\psi_{\lambda}| \, dx \\
  &\quad + 2^{jd} \int_{\Rd\setminus B(x_0,2^{j_1-n+1})} |f(x)- P(x)| \, |\psi_{\lambda}| \, dx.
 \end{align*}
 Once again, it is sufficient to bound the first integral, which is easy since we have
 \[
  2^{jd} \int_{B(x_0,2^{j_1-n+1})} |f(x)- P(x)| \, dx
  \le C \epsilon_{n-j_1-1} \sigma_n^{-1},
 \]
 for some constant $C>0$.
\end{proof}

\begin{Thm}
 Let $p,q\in [1,\infty]$, $f\in \Lpl^p$, $x_0\in \Rd$ and $\seq{\sigma}$ be an admissible sequence such that $\us(\seq{\sigma})> -d/p$ and $\underline{\sigma}_1 >2^{-d/p}$. If there exists $\eta>0$ such that $f\in B^\eta_{p,q}$, then
 \[
  (\sigma_j d_j^p(x_0) )_j \in \lp^q
 \]
 implies $f\in \ep^{\seq{\sigma}}_{p,q,\log} (x_0)$.
\end{Thm}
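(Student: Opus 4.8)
The plan is to follow the proof of Theorem~\ref{pro:wav2} almost line by line, the only genuine change being that the compact support of the wavelets --- which there collapsed every sum over $\Lambda_j$ to a finite sum over cubes close to $x_0$ --- is now replaced by the fast decay $|D^\alpha\psi^{(i)}(x)|\le C_N(1+|x|)^{-N}$ ($N$ arbitrarily large), so that those sums become absolutely convergent tails, exactly as in the two preceding theorems of this section. As in Theorem~\ref{pro:wav2} one treats separately the case $\os(\seq{\sigma})\ge 0$, where one sets $n:=\fl{\os(\seq{\sigma})}$ and uses a nontrivial polynomial, and the case $\os(\seq{\sigma})<0$, where the polynomial is taken to be $0$. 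The hypothesis $\us(\seq{\sigma})>-d/p$ guarantees $2^{-jd/p}\sigma_j^{-1}\to 0$, so that $\ep^{\seq{\sigma}}_{p,q,\log}(x_0)$ is well defined, while $\underline{\sigma}_1>2^{-d/p}$ allows one to choose $\eta$ small enough that the cutoff $j_*(J):=\ce{|\log_2(2^{-Jd/p}\sigma_J^{-1})|/\eta}$ is strictly increasing in $J$ --- both exactly as in that earlier proof.

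Fix $\seq{\epsilon}\in\lp^q$ with $\sigma_j d^p_j(x_0)\le\epsilon_j$ for all $j$, and, from $f\in B^\eta_{p,q}$, a sequence $\seq{\delta}\in\lp^q$ with $(\sum_{\lambda\in\Lambda_j}|c_\lambda|^p)^{1/p}\le\delta_j2^{j(d/p-\eta)}$ for all $j$ (with the usual modifications when $p=\infty$ or $q=\infty$); here $f\in B^\eta_{p,q}$ is the global substitute for the local hypothesis $f\in\dot{X}^\eta_{p,q}(x_0)$ of Theorem~\ref{pro:wav2}, and this global control is precisely what is needed to absorb the tails. Writing $f_j:=\sum_{\lambda\in\Lambda_j}c_\lambda\psi_\lambda$, $f_{-1}:=\sum_k C_k\phi_k$, and $P_J:=\sum_{|\alpha|\le n}\frac{(\cdot-x_0)^\alpha}{|\alpha|!}\sum_{j=-1}^J D^\alpha f_j(x_0)$ (with $P_J=0$ when $\os(\seq{\sigma})<0$), one decomposes, as in Theorem~\ref{pro:wav2},
\[
 2^{Jd/p}\|f-P_J\|_{\Lp^p(B(x_0,2^{-J}))}\le \sum_{j=-1}^J 2^{Jd/p}\Big\|f_j-\sum_{|\alpha|\le n}\tfrac{(\cdot-x_0)^\alpha}{|\alpha|!}D^\alpha f_j(x_0)\Big\|_{\Lp^p(B(x_0,2^{-J}))}+\sum_{j>J}2^{Jd/p}\|f_j\|_{\Lp^p(B(x_0,2^{-J}))}.
\]

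The substance of the proof is to bound, for $j\le J$ and $y\in B(x_0,2^{-J})$, the derivatives $D^\alpha f_j(y)$ with $|\alpha|=n+1$ (and $f_j(y)$ itself when $\os(\seq{\sigma})<0$), and, for $j>J$, the norms $\|f_j\|_{\Lp^p(B(x_0,2^{-J}))}$. For each of these one splits the $k$-sum over the dyadic annuli $A_\ell=\{k:2^{\ell-1}\le|k-2^jx_0|<2^{\ell}\}$, $\ell\ge 0$: the decay of $\psi^{(i)}$ yields a factor $\lesssim 2^{-\ell N}$, H\"older's inequality yields $|A_\ell|^{1/p'}\simeq 2^{\ell d/p'}$ (with $p'$ the conjugate exponent of $p$), and the mass $(\sum_{k\in A_\ell}|c_{j,k}|^p)^{1/p}$ is estimated in two regimes: when $\ell\le j$ the cubes $\lambda_{j,k}$, $k\in A_\ell$, lie in $3\lambda_{j-\ell}(x_0)$ (after adjusting constants), so that $(\sum_{k\in A_\ell}|c_{j,k}|^p)^{1/p}\le C2^{\ell d/p}d^p_{j-\ell}(x_0)\le C2^{\ell d/p}\epsilon_{j-\ell}\sigma_{j-\ell}^{-1}$, whereas for $\ell>j$ one uses the global bound $(\sum_{k\in A_\ell}|c_{j,k}|^p)^{1/p}\le\delta_j2^{j(d/p-\eta)}$. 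Choosing $N$ large enough (larger than $d+\os(\seq{\sigma})$ and $d+\eta$) makes every $\ell$-series summable: the ``near'' regime contributes, after comparing $\sigma_{j-\ell}^{-1}$ with $\sigma_j^{-1}$ by admissibility, a term of the shape $\theta_j2^{j(n+1)}\sigma_j^{-1}$ with $\seq{\theta}$ the convolution of a summable geometric sequence with $\seq{\epsilon}$, hence $\seq{\theta}\in\lp^q$; the ``far'' regime contributes $C\delta_j2^{-jM}$ with $M$ as large as we wish. For $\|f_j\|_{\Lp^p(B(x_0,2^{-J}))}$ with $j>J$ the same splitting applies, now with the ``near'' cubes being those $\lambda_{j,k}\subset 3\lambda_J(x_0)$: using the single-scale wavelet characterization of $\Lp^p$ (see \cite{Meyer:95}) one obtains $2^{Jd/p}\|(\text{near part})\|_{\Lp^p}\le C\epsilon_J\sigma_J^{-1}$ uniformly in $j$, and $2^{Jd/p}\|(\text{far part})\|_{\Lp^p}\le C\delta_j2^{-\eta j}$.

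With these bounds the assembly is identical to that of Theorem~\ref{pro:wav2}: the low-frequency sum becomes $C2^{-J(n+1)}\sum_{j=-1}^J(\theta_j+\delta_j)2^{j(n+1)}\sigma_j^{-1}$, which Lemma~\ref{lem:reste2} --- applicable since $\us(\seq{\sigma})\le\os(\seq{\sigma})<n+1$, and in the case $\os(\seq{\sigma})<0$ one simply sums directly, the weights $\sigma_j^{-1}$ then growing geometrically --- collapses to $C\xi_J\sigma_J^{-1}$ with $\seq{\xi}\in\lp^q$; and the high-frequency sum is split as $\sum_{J<j\le j_*(J)}+\sum_{j>j_*(J)}$, the first piece having at most $C|\log_2(2^{-Jd/p}\sigma_J^{-1})|$ terms each $\le C\epsilon_J\sigma_J^{-1}$, the second being controlled by $2^{Jd/p}\sum_{j>j_*(J)}\delta_j2^{-\eta j}$, which by Young's inequality together with $2^{-\eta j_*(J)}\le 2^{-Jd/p}\sigma_J^{-1}$ is $\le\sigma_J^{-1}\zeta_{j_*(J)}$ with $\seq{\zeta}\in\lp^q$, and $(\zeta_{j_*(J)})_J\in\lp^q$ because $j_*$ is strictly increasing. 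Altogether
\[
 2^{Jd/p}\|f-P_J\|_{\Lp^p(B(x_0,2^{-J}))}\le \theta'_J\,|\log_2(2^{-Jd/p}\sigma_J^{-1})|\,\sigma_J^{-1},\qquad\seq{\theta'}\in\lp^q,
\]
for $J$ large enough, and a Whitney-type argument (as in the proof of sufficiency in Proposition~\ref{prop13}) turns this polynomial estimate into the difference-operator condition defining $\ep^{\seq{\sigma}}_{p,q,\log}(x_0)$. The main obstacle is the bookkeeping of the annular tail estimates: one must keep the powers $2^{-\ell}$, the shifted sequence $\seq{\epsilon}$ and the Besov sequence $\seq{\delta}$ properly aligned so that the near contributions still collapse under Lemma~\ref{lem:reste2}/Lemma~\ref{lem:reste} and the far contributions remain genuinely negligible; once this is arranged, everything reduces to the computation already carried out in the compactly supported case.
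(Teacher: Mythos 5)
Your treatment of the low-frequency part $j\le J$ is essentially the paper's: the dyadic annuli around $2^jx_0$, the control of the coefficient mass on the annulus at distance $2^{\ell-j}$ by the $p$-leader at scale $j-\ell$ (i.e.\ by $2^{\ell d/p}\epsilon_{j-\ell}\sigma_{j-\ell}^{-1}$), the fast decay absorbing $2^{\ell(d-N)}$, the Besov norm for the region at distance greater than $1$, and the final collapse via Lemma~\ref{lem:reste2} and the $j_*(J)$ cutoff. That part is sound.

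The gap is in the high-frequency part $j>J$. There you abandon the annular decomposition and keep only two regimes: cubes inside $3\lambda_J(x_0)$ (controlled by the leader at scale $J$) and everything else, for which you claim $2^{Jd/p}\|(\text{far part})\|_{\Lp^p(B(x_0,2^{-J}))}\le C\delta_j2^{-\eta j}$ from $f\in B^\eta_{p,q}$ alone. This fails for the cubes at \emph{intermediate} distances, between roughly $2^{-J}$ and $1$ from $x_0$. Take $j=J+1$ and a cube $\lambda_{j,k}$ at distance $2^{5-J}$ from $x_0$: for $y\in B(x_0,2^{-J})$ one has $|2^jy-k|\approx 2^6$, so the wavelet decay contributes only a fixed constant, while the only information the global Besov norm gives on $|c_{j,k}|$ is $|c_{j,k}|\le\delta_j2^{j(d/p-\eta)}$. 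Its contribution to $2^{Jd/p}\|f_j\|_{\Lp^p(B(x_0,2^{-J}))}$ is then of order $\delta_j2^{j(d/p-\eta)}$, not $\delta_j2^{-\eta j}$, and since no upper bound on $\os(\seq{\sigma})$ is assumed (only $\us(\seq{\sigma})>-d/p$), there is no reason for $2^{J(d/p-\eta)}$ to be $O(\sigma_J^{-1})$; the high-frequency sum then cannot be brought down to $\sigma_J^{-1}\,|\log_2(2^{-Jd/p}\sigma_J^{-1})|$. The paper avoids this by inserting, between the near part $\Lambda'_{j,0}$ and the truly far part $\Lambda'_{j,*}$ (distance $>1$, where the Besov bound genuinely suffices), the $J$ annuli $\Lambda'_{j,l}$ at distances $\sim 2^{l-J}$, $1\le l\le J$: the coefficients there lie in $3\lambda_{J-l}(x_0)$ up to constants, hence are controlled by the leader $\epsilon_{J-l}\sigma_{J-l}^{-1}$, and together with the decay factor $2^{-u(j-J+l)}$ with $u>\os(\seq{\sigma})$ this gives $C2^{-ul}\epsilon_{J-l}\overline{\sigma}_l\sigma_J^{-1}$, summable in $l$ to $\xi_J\sigma_J^{-1}$ with $\seq{\xi}\in\lp^q$. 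You need this intermediate-leader step; the rest of your assembly (the $j_*(J)$ splitting, Lemma~\ref{lem:reste}/\ref{lem:reste2}, the Whitney step via Proposition~\ref{prop13}) then goes through as you describe.
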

\begin{proof}
 Let us use the definitions of $n$, $m$, $J_0$, $J_1$, $J$, $\seq{\epsilon}$, $P_J$ and $f_j$ introduced in the proof of Theorem~\ref{pro:wav2}. Of course, we have
 \begin{align}
  \lefteqn{2^{J d/p} \| f- P_J \|_{\Lp^p(B(x_0,2^{-J}))}} & \nonumber\\
  &\le \sum_{j=-1}^J 2^{J d/p} \| f_j - \sum_{|\alpha|\le n} \frac{(\cdot -x_0)^\alpha}{|\alpha|!} \, D^\alpha f_j(x_0) \|_{\Lp^p(B(x_0,2^{-J}))} \label{eq:wav s 1}\\
  &\quad + \sum_{j= J+1}^\infty 2^{J d/p} \| f_j \|_{\Lp^p(B(x_0,2^{-J}))}. \label{eq:wav s 2}
 \end{align}

 Let us first consider the term~(\ref{eq:wav s 1}) of the last bound. Let $\alpha$ be a multi-index such that $|\alpha|= n+1$; from Taylor's formula, we need to bound $|D^\alpha f_j(x)|$ for $x\in B(x_0,2^{-J})$. Assume now that $j$ is such that $j-\ce{j/2}\ge J_0$ and define
 \[
 \Lambda_{j,0} := \{ \lambda_{j,k}^{(i)} \in\Lambda_j : |2^j x_0 -k| \le 1\},
 \]
 for $l$ such that $1\le l\le \ce{j/2}$,
 \[
  \Lambda_{j,l} := \{ \lambda_{j,k}^{(i)} \in\Lambda_j : 2^{l-1} < |2^j x_0 - k|\le 2^l \}
 \]
 and
 \[
  \Lambda_{j,*} := \{ \lambda_{j,k}^{(i)} \in\Lambda_j :  |2^j x_0 - k| \ge 2^{\ce{j/2}} \}.
 \]
 A sum over $\Lambda_j$ can be decomposed into a sum over the sets $\Lambda_{j,l}$ (with $l \in \{0,\ldots, \ce{j/2}\})$ and $ \Lambda_{j,*}$. For $1\le l\le \ce{j/2}$, we have, by H\"older's inequality,
 \begin{align*}
  \lefteqn{\sum_{\lambda\in \Lambda_{j,l}} |c_\lambda| \, |D^\alpha \psi_\lambda (x)|} & \\
  &\le (\sum_{\lambda\in \Lambda_{j,l}} |c_\lambda|^p)^{1/p} (\sum_{\lambda\in \Lambda_{j,l}} |D^\alpha \psi_\lambda(y)|^{p'})^{1/p'} \\
  &\le C (\epsilon_{j-l-J_0} 2^{l d/p} \sigma_{j-l}^{-1}) (\sum_{\lambda\in \Lambda_{j,l}} (\frac{1}{(1+| 2^j x-k|)^{2^{d+1}+u+d/p})^{p'}})^{1/p'} \\
  &\le C \epsilon_{j-l-J_0} 2^{-u l} \sigma_{j-l}^{-1},
 \end{align*}
 where $u$ is such that $u>\os(\seq{\sigma})$ and $p'$ is the conjugate exponent of $p$; for $l=0$, we can write
 \[
  \sum_{\lambda\in \Lambda_{j,0}} |c_\lambda| \, |D^\alpha \psi_\lambda (x)| \le \epsilon_{j-J-J_0} \sigma_j^{-1}.
 \]
 For the last set, we get
 \[
  \sum_{\lambda\in \Lambda_{j,*}} |c_\lambda| \, |D^\alpha \psi_\lambda (x)| \le \delta_j \sigma_j^{-1},
 \]
 for a sequence $\seq{\delta}\in \lp^q$, as $f\in B^\eta_{p,q}$. Using these results, we obtain
 \[
 \sum_{\lambda\in \Lambda_j} |c_\lambda| \, |D^\alpha \psi_\lambda (x)| \le
 \delta_j \sigma_j^{-1} + \sum_{l=0}^{\ce{j/2}} \epsilon_{j-l-J0} 2^{-u l} \sigma_{j-l}^{-1}
 \le (\delta_j+ \xi_j) \sigma_j^{-1},
 \]
 where $\seq{\xi} \in \lp^q$ is defined as in the proof of Lemma~\ref{lem:reste}. For the first term~(\ref{eq:wav s 1}), we still have to consider the case $j-\ce{j/2}<J_0$; since $f\in B^\eta_{p,q}$, we can write
 \[
  \sum_{\lambda\in \Lambda_j} |c_\lambda| \, |D^\alpha \psi_\lambda (x)| \le \delta_j 2^{-\eta j} 2^{j d/p} \le C \delta_j \sigma_j^{-1},
 \]
 so that $|D^\alpha f_j(x)|$ is bounded by $C' (\delta_j + \xi_j) 2^{n+1} \sigma_j^{-1}$, for any $j\le J$; we thus have
 \begin{align*}
  \lefteqn{\| f_j - \sum_{|\alpha|\le n} \frac{(\cdot -x_0)^\alpha}{|\alpha|!} \, D^\alpha f_j(x_0) \|_{\Lp^p(B(x_0,2^{-J}))}} & \\
  &\le C (\delta_j+\xi_j) 2^{-(n+1+d/p)J} 2^{(n+1)j} \sigma_j^{-1}.  
 \end{align*}
 Finally, as $\os(\seq{\sigma})<n+1$, (\ref{eq:wav s 1}) is bounded by
 \[
  C 2^{-(n+1)J} \sum_{j=-1}^J (\delta_j+\xi_j) 2^{(n+1)j} \sigma_j^{-1} \le \theta_J \sigma_J^{-1},
 \]
 where $\seq{\theta}\in \lp^q$ is given by Lemma~\ref{lem:reste2}.

 Let us now consider the second term~(\ref{eq:wav s 2}); we actually need to bound the $\Lp^p$-norm of $f_j$ for $j\ge J$. Let us, in the same spirit as before, define
 \[
  \Lambda'_{j,0} := \{\lambda_{j,k}^{(i)} \in \Lambda_j : |2^j x_0 -k| \le 2^{j+J_0-J} \},
 \]
 for $l$ such that $1\le l\le J$,
 \[
  \Lambda'_{j,l} := \{\lambda_{j,k}^{(i)} \in \Lambda_j : 2^{j+J_0-J+l-1}<|2^j x_0 -k| \le 2^{j+J_0-J+l} \}
 \]
 and
 \[
  \Lambda'_{j,*} := \{\lambda_{j,k}^{(i)} \in \Lambda_j : 2^j <|2^j x_0 -k| \}.
 \]
 Using the wavelet characterization of the $\Lp^p$ spaces, we can write
 \begin{align*}
  \| \sum_{\lambda\in \Lambda'_{j,0}} c_\lambda \psi_\lambda \|_{\Lp^p(B(x_0,2^{-J}))}
  &\le C (\sum_{\lambda\in \Lambda_j, \lambda\subset \lambda_J(x_0)} 2^{-dj} |c_\lambda|^p)^{1/p} \\
  &\le C \epsilon_J 2^{-J d/p} \sigma_J^{-1}.
 \end{align*}
 For $l\in \{1,\ldots, J\}$, we get this time
 \[
 \sum_{\lambda\in \Lambda'_{j,l}} |c_\lambda|\, |\psi_\lambda(x)| \le C \epsilon_{J-l} 2^{-(j-J+l)u} \sigma_{J-l}^{-1}
 \le C 2^{-ul} \epsilon_{J-l} \overline{\sigma}_l \sigma_J^{-1},
 \]
 for $x\in B(x_0,2^{-J})$ and
 \[
 \sum_{\lambda\in \Lambda'_{j,*}} |c_\lambda|\, |\psi_\lambda(x)| \le C 2^{-\delta J} \sigma_J^{-1},
 \]
 for some $\delta>0$. As previously, we get that there exists a sequence $\seq{\rho} \in \lp^q$ such that
 \[
  2^{J d/p} \|f_j\|_{\Lp^p(B(x_0,2^{-J}))} \le \rho_J \sigma_J^{-1},
 \]
 so that we can conclude using the same arguments as in the compactly supported case.
\end{proof}

\section{A multifractal formalism}
We show here that the generalized Besov spaces provide a natural framework for the multifractal formalism based on the $\ep^{\seq{\sigma}}_{p,q}$ spaces.
\subsection{Definitions}
As the wavelet leaders method rests on the oscillation spaces $\Os^{s,s'}_{p}$ (see \cite{Jaffard:04b,Jaffard:05b}), we need to adapt these spaces to our general framework.
\begin{Def}
 Let $p,q,r\in [1,\infty]$; a function $f$ belongs to $\Os^{\seq{\sigma}}_{p,r,q}$ if the sequence $(C_k)_k$ defined by (\ref{eq:def Ck}) belongs to $\lp^q$ and if
 \[
  (\sum_{j\in\N} (\sum_{\lambda\in \Lambda_j} (\sigma_j 2^{-dj/r} d_\lambda^p)^r)^{q/r})^{1/q} \le C,
 \]
 for some constant $C>0$.
\end{Def}
We will show that these spaces are closely related to the generalized Besov spaces introduced in Definition~\ref{def:gen bes}; as expected, these are obtained by replacing the dyadic sequence appearing in the usual definition with an admissible sequence (see e.g.\ \cite{Loosveldt:19}).
\begin{Def}\label{def:gen bes}
 Let $(\phi_j)_j$ be a sequence of functions belonging to the Schwartz class $\cs$ such that
 \begin{itemize}
  \item $\supp(\phi_0) \subset \{x\in \Rd: |x|\le 2 \}$,
  \item $\supp(\phi_j) \subset \{x\in \Rd: 2^{j-1}\le |x|\le 2^{j+1} \}$ ($j\ge 1$),
  \item for any multi-index $\alpha$, there exists a constant $C_\alpha>0$ such that we have $\displaystyle \sup_{x\in \Rd} |D^\alpha \phi_j(x)| \le C_\alpha 2^{-j|\alpha|}$,
  \item $\displaystyle \sum_{j\in \N} \phi_j =1$.
 \end{itemize}
 Given $p,q \in [1,\infty]$; the space $B^{\seq{\sigma}}_{p,q}$ is defined as
\[
 B^{\seq{\sigma}}_{p,q} := \{ f\in \cs' : (\sum_{j\in\N} \|\sigma_j \mathcal{F}^{-1}(\phi_j \mathcal{F} f) \|_{\Lp^p}^q)^{1/q} < \infty \},
\]
where $\ft$ denotes the Fourier transform.
\end{Def}
We will use the wavelet characterization of these spaces (see \cite{Almeida:05}).
\begin{Def}
 Let $p,q\in [1,\infty]$; if, given $h>-d/p$, $\seq{\gamma}^{(h)}$ is an admissible sequence, the family of admissible sequences $h\mapsto \seq{\gamma}^{(h)}$ is $(p,q)$-decreasing if it satisfies $\us(\seq{\gamma}^{(h)}) >-d/p$, $\underline{\gamma}_1^{(h)} > 2^{-d/p}$ for any $h>-d/p$ and if $-d/p<h<h'$ implies
\[
 \ep^{\seq{\gamma}^{(h)}}_{p,q} (x_0) \subset \ep^{\seq{\gamma}^{(h')}}_{p,q} (x_0).
\]
\end{Def}
In the sequel, we will only consider families of admissible sequences $\seq{\gamma}^{(\cdot)}$ that are implicitly $(p,q)$-decreasing. This notion was introduced in \cite{Kreit:18}, where criteria to obtain such families were presented.

A multifractal formalism is an empirical method that allows to estimate the quantity
\[
 \ha \{x_0\in \Rd: h(x_0)= h\},
\]
where $\ha$ denotes the Hausdorff dimension with the convention $\ha(\emptyset)=-\infty$ (see \cite{Falconer:03} for example) and $h(x_0)$ is some kind of generalized H\"older exponent. Usually, one requires such a method to be valid for a large class of functions. We aim at providing here a multifractal formalism for the exponents (\ref{eq:def h}) defined from the $T^{\seq{\sigma}}_{p,q}$ spaces, thus generalizing the wavelet leaders method \cite{Jaffard:04b,Jaffard:05b}.
\begin{Def}
 Given $p,q\in [1,\infty]$ and a family of admissible sequences $\seq{\gamma}^{(\cdot)}$, the generalized $(p,q)$-H\"older exponent associated to $f\in \Lpl^p$ and $\seq{\gamma}^{(\cdot)}$ at $x_0\in \Rd$ is defined by
 \begin{equation}\label{eq:def h}
  h_{p,q}(x_0) := \sup \{h>-d/p: f\in \ep^{\seq{\gamma}^{(h)}}_{p,q}(x_0) \}.
 \end{equation}
\end{Def}
The most natural family of admissible sequences is $h\mapsto (2^{jh})_j$; in this case, $h_{\infty,\infty}(x_0)$ is the usual H\"older exponent \cite{Jaffard:04b}, while $h_{p,\infty}(x_0)$ is the $p$-exponent considered in \cite{Jaffard:05b}.

Given $p,q\in [1,\infty]$, a family of admissible sequences $\seq{\gamma}^{(\cdot)}$ and a function $f\in\Lpl^p$, we set
\[
 \ms_{p,q}(h) := \ha \{x_0\in \Rd: h_{p,q}(x_0)= h\}.
\]

\subsection{Preliminary results}
In this section, we will implicitly work with indices $p,q,r\in [1,\infty]$, a function $f$ that belongs to $\Lpl^p$, a point $x_0\in \Rd$, a family of admissible sequences $\seq{\gamma}^{(\cdot)}$ and an admissible sequence $\seq{\sigma}$.
\begin{Lemma}
 If \[
 \gamma^{(h)}_j 2^{\eta j} d_j^p(x_0) \in \lp^q,
\]
for some $\eta >0$ such that $\fl{\os(\seq{\gamma}^{(h)})+ \eta} = \fl{\os(\seq{\gamma}^{(h)})}$, then $h_{p,q}(x_0) \ge h$.
\end{Lemma}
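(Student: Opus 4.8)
The plan is to run the hypothesis through the converse wavelet criterion of Theorem~\ref{pro:wav2} (or, for wavelets in the Schwartz class, its counterpart proved in the preceding section) applied to a well-chosen admissible sequence, and then to exploit the geometric factor $2^{\eta j}$ to absorb the logarithmic correction that criterion necessarily introduces, thereby landing in $\ep^{\seq{\gamma}^{(h)}}_{p,q}(x_0)$ with no correction at all.

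First I would introduce the admissible sequence $\seq{\sigma}$ defined by $\sigma_j := \gamma^{(h)}_j 2^{\eta j}$. It is admissible as a product of admissible sequences, with $\us(\seq{\sigma}) = \us(\seq{\gamma}^{(h)}) + \eta$ and $\os(\seq{\sigma}) = \os(\seq{\gamma}^{(h)}) + \eta$; hence $\underline{\sigma}_1 = \underline{\gamma}^{(h)}_1 2^{\eta} > 2^{-d/p}$, and $2^{-jd/p}\sigma_j^{-1} = (2^{jd/p}\gamma^{(h)}_j)^{-1}2^{-\eta j} \to 0$ (the first factor because $\us(\seq{\gamma}^{(h)}) > -d/p$, the second because $\eta > 0$), so $\seq{\sigma}$ satisfies the structural hypotheses of Theorem~\ref{pro:wav2}. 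Moreover the assumption $\gamma^{(h)}_j 2^{\eta j} d_j^p(x_0) \in \lp^q$ is exactly $(\sigma_j d_j^p(x_0))_j \in \lp^q$, and the condition $\fl{\os(\seq{\gamma}^{(h)})+\eta} = \fl{\os(\seq{\gamma}^{(h)})}$ says precisely that $n := \fl{\os(\seq{\sigma})} = \fl{\os(\seq{\gamma}^{(h)})}$, so $\seq{\sigma}$ and $\seq{\gamma}^{(h)}$ call for approximating polynomials of the same degree.

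Next, since $f$ enjoys the minimal global regularity required by that criterion --- it belongs to some $\dot{X}^{\beta}_{p,q}(x_0)$ with $\beta > 0$, e.g.\ because $f \in B^{\beta}_{p,q}$ for some $\beta>0$, which is the running assumption here --- Theorem~\ref{pro:wav2} gives $f \in \ep^{\seq{\sigma}}_{p,q,\log}(x_0)$; more usefully, retracing its proof hands us a sequence of polynomials $(P_J)_J$ of degree at most $n$ and a sequence $\seq{\rho} \in \lp^q$ with
\[
 2^{Jd/p}\,\|f - P_J\|_{\Lp^p(B(x_0,2^{-J}))} \le \rho_J\,\sigma_J^{-1}\,\bigl|\log_2(2^{-Jd/p}\sigma_J^{-1})\bigr|
\]
for all large $J$ (in the degenerate range $\os(\seq{\sigma}) < 0$ one just takes $P_J \equiv 0$, exactly as there). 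Multiplying by $\gamma^{(h)}_J$ and using $\gamma^{(h)}_J\sigma_J^{-1} = 2^{-\eta J}$ together with the bound $\bigl|\log_2(2^{-Jd/p}\sigma_J^{-1})\bigr| = Jd/p + \eta J + \log_2\gamma^{(h)}_J \le CJ$ (valid for large $J$ by admissibility of $\seq{\gamma}^{(h)}$) yields
\[
 \gamma^{(h)}_J\,2^{Jd/p}\,\|f - P_J\|_{\Lp^p(B(x_0,2^{-J}))} \le C\,J\,2^{-\eta J}\,\rho_J ,
\]
whose right-hand side lies in $\lp^q$ because $(J2^{-\eta J})_J$ is bounded and $\seq{\rho} \in \lp^q$. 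Since the $P_J$ have degree at most $n = \fl{\os(\seq{\gamma}^{(h)})}$, the elementary inequality $|\Delta_h^{n+1}g(x)| \le C_n\sum_{k=0}^{n+1}|g(x+kh)|$ applied with $g = f - P_j$ (as in the proof of Proposition~\ref{prop13}) converts this into $f \in \ep^{\seq{\gamma}^{(h)}}_{p,q}(x_0)$. As $h > -d/p$, this places $h$ in the set whose supremum defines $h_{p,q}(x_0)$ in~(\ref{eq:def h}), so $h_{p,q}(x_0) \ge h$.

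The only genuinely delicate point is the accounting of polynomial degrees: the converse criterion describes $f$ modulo a polynomial of degree $\fl{\os(\seq{\sigma})}$, whereas membership in $\ep^{\seq{\gamma}^{(h)}}_{p,q}(x_0)$ is a statement modulo a polynomial of degree $\fl{\os(\seq{\gamma}^{(h)})}$; the hypothesis on $\eta$ is tailored exactly so that these two degrees coincide, and the argument collapses without it. The remaining ingredients --- the admissibility and convergence computations and the at-most-linear growth of $\log_2(2^{-jd/p}\sigma_j^{-1})$ --- are routine, and the disappearance of the logarithmic correction under an infinitesimal gain of smoothness is the expected mechanism.
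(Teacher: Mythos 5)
Your proposal is correct and follows essentially the same route as the paper: the paper's own (much terser) proof likewise feeds the hypothesis into the sufficiency criterion of Theorem~\ref{pro:wav2} with the boosted sequence $(\gamma_j^{(h)}2^{\eta j})_j$, obtains polynomials of degree at most $\fl{\os(\seq{\gamma}^{(h)})}$ satisfying the log-corrected estimate, and lets the factor $2^{-\eta j}$ absorb the logarithm to conclude $f\in \ep^{\seq{\gamma}^{(h)}}_{p,q}(x_0)$. Your explicit verification of the hypotheses of Theorem~\ref{pro:wav2}, of the degree bookkeeping via $\fl{\os(\seq{\gamma}^{(h)})+\eta}=\fl{\os(\seq{\gamma}^{(h)})}$, and of the implicit minimal-regularity assumption are all details the paper leaves unstated but uses in the same way.
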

\begin{proof}
 We know that there exists a sequence of polynomial $(P_j)_j$ of degree at most $\os(\seq{\gamma}^{(h)})$ and a sequence $\seq{\epsilon} \in \lp^q$ such that
\[
 \gamma_j^{(h)} 2^{j d/p} \| f - P_j \|_{\Lp^p(B(x_0,2^{-j}))} \le C \epsilon_j 2^{-\eta j} |\log_2(2^{-\eta j -j d/p}/\gamma_j^{(h)})|,
\]
for $j$ large enough, which implies $f\in \ep^{\seq{\gamma}^{(h)}}_{p,q}(x_0)$.
\end{proof}
\begin{Prop}
 If the function $f$ belongs to both $B^\eta_{p,q}$ for some $\eta>0$ and $\Os^{\seq{\sigma}}_{p,r,q}$, then
 \[
  \ha(x_0\in \Rd: h_{p,q}(x_0) <h \} \le d+ r \os(\frac{\seq{\gamma}^{(h)}}{\seq{\sigma}}).
 \]
\end{Prop}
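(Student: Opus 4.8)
The plan is to follow the standard route for an upper bound on a multifractal spectrum. Put $E := \{x_0\in\Rd : h_{p,q}(x_0)<h\}$ and $\tau_j := \gamma_j^{(h)}/\sigma_j$; as a quotient of admissible sequences $\seq{\tau}=(\tau_j)_j$ is admissible and $\os(\seq{\tau})=\os(\seq{\gamma}^{(h)}/\seq{\sigma})$, so the goal is $\ha(E)\le d+r\,\os(\seq{\tau})$. First I would cover $E$ by the dyadic cubes at each scale that carry an abnormally large $p$-leader; then bound the number of such cubes at scale $j$ using the membership $f\in\Os^{\seq{\sigma}}_{p,r,q}$; then run a Hausdorff pre-measure estimate and let the tolerance tend to $0$.

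For the covering step, fix $\varepsilon\in(0,1)$ and pick $\eta\in(0,\varepsilon)$ small enough that $\fl{\os(\seq{\gamma}^{(h)})+\eta}=\fl{\os(\seq{\gamma}^{(h)})}$ (possible for every small $\eta$: trivially if $\os(\seq{\gamma}^{(h)})\notin\Z$, and for any $\eta<1$ otherwise). If $x_0\in E$, the preceding lemma forbids $(\gamma_j^{(h)}2^{\eta j}d_j^p(x_0))_j\in\lp^q$; whether $q$ is finite or not, a sequence not lying in $\lp^q$ must satisfy $\gamma_j^{(h)}2^{\eta j}d_j^p(x_0)\ge 2^{-\eta j}$ for infinitely many $j$, since otherwise its tail would be dominated by the summable (in particular bounded) sequence $(2^{-\eta j})_j$. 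Hence $\gamma_j^{(h)}d_j^p(x_0)\ge 2^{-2\eta j}\ge 2^{-2\varepsilon j}$ infinitely often, and because $d_j^p(x_0)=\sup_{\lambda\in 3\lambda_j(x_0)}d_\lambda^p$, each such $j$ yields a cube $\lambda\in3\lambda_j(x_0)$ with $\gamma_j^{(h)}d_\lambda^p\ge 2^{-2\varepsilon j}$; as $x_0$ then lies in the union of the cubes adjacent to this $\lambda$, we get, writing $A_j(\varepsilon):=\{\lambda\in\Lambda_j:\gamma_j^{(h)}d_\lambda^p\ge 2^{-2\varepsilon j}\}$,
\[
 E\ \subset\ \bigcap_{J\in\N}\ \bigcup_{j\ge J}\ \bigcup_{\lambda\in A_j(\varepsilon)}3\lambda .
\]

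Membership $f\in\Os^{\seq{\sigma}}_{p,r,q}$ makes $j\mapsto\big(\sum_{\lambda\in\Lambda_j}(\sigma_j2^{-dj/r}d_\lambda^p)^r\big)^{1/r}$ a bounded sequence, say by $C$; in particular $d_\lambda^p\le C\,2^{dj/r}\sigma_j^{-1}$ for all $\lambda\in\Lambda_j$. If $\os(\seq{\tau})<-d/r$ (equivalently $d+r\,\os(\seq{\tau})<0$) then $\tau_j2^{dj/r}\to0$ geometrically, so $\gamma_j^{(h)}2^{\eta j}d_j^p(x_0)\le C2^{\eta j}\tau_j2^{dj/r}\to0$ for $\eta$ small, and the preceding lemma gives $h_{p,q}(x_0)\ge h$ at every $x_0$: thus $E=\emptyset$ and, with the convention $\ha(\emptyset)=-\infty$, there is nothing to prove. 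Otherwise one may take $r<\infty$ (for $r=\infty$ the right-hand side is at least $d$, which bounds $\ha(E)$ since $E\subset\Rd$), and if $\lambda\in A_j(\varepsilon)$ then $d_\lambda^p\ge 2^{-2\varepsilon j}/\gamma_j^{(h)}$, whence $\#A_j(\varepsilon)\le C^r\,2^{dj}\,2^{2\varepsilon rj}\,\tau_j^{\,r}$. Since each $3\lambda$ with $\lambda\in\Lambda_j$ is covered by at most $3^d$ dyadic cubes of side $2^{-j}$, for $s>0$, using the Boyd estimate $\tau_j\le C'\,2^{j(\os(\seq{\tau})+\varepsilon)}$,
\[
 \mathcal{H}^s_{\sqrt d\,2^{-J}}(E)\ \le\ C''\sum_{j\ge J}\tau_j^{\,r}\,2^{j(d+2\varepsilon r-s)}\ \le\ C'''\sum_{j\ge J}2^{j(d+r\,\os(\seq{\tau})+3r\varepsilon-s)} .
\]
Thus $\mathcal{H}^s(E)=0$ as soon as $s>d+r\,\os(\seq{\tau})+3r\varepsilon$, so $\ha(E)\le d+r\,\os(\seq{\tau})+3r\varepsilon$, and $\varepsilon\downarrow0$ gives the claim.

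The one genuinely delicate point is the covering step: converting the analytic condition $h_{p,q}(x_0)<h$ into the geometric statement that $x_0$ is caught in infinitely many dyadic cubes bearing a large $p$-leader. It rests entirely on the preceding lemma — hence, through it, on the hypothesis $f\in B^\eta_{p,q}$, which is what turns leader control at $x_0$ into membership in an $\ep^{\seq{\gamma}^{(h)}}_{p,q,\log}$ space — together with the elementary but case-sensitive observation that a sequence failing to lie in $\lp^q$ must exceed $2^{-\eta j}$ infinitely often, and with the freedom (granted by the floor constraint on $\eta$) to let $\eta\downarrow0$. Everything afterwards is bookkeeping: the cardinality bound on $A_j(\varepsilon)$ is immediate from the definition of $\Os^{\seq{\sigma}}_{p,r,q}$, and the pre-measure estimate is the usual geometric-series argument.
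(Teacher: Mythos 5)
Your argument is correct and follows essentially the same route as the paper's proof: both convert $h_{p,q}(x_0)<h$ into the statement that $x_0$ is caught infinitely often in (a neighbour of) a dyadic cube carrying an abnormally large $p$-leader via the preceding lemma, bound the number of such cubes at scale $j$ by $C2^{jd}(\gamma_j^{(h)}/\sigma_j)^r$ up to a small exponential loss using the $\Os^{\seq{\sigma}}_{p,r,q}$ membership, and finish with the standard $\limsup$-covering estimate before letting the tolerance tend to $0$, treating $r=\infty$ and the case $d+r\os(\seq{\gamma}^{(h)}/\seq{\sigma})<0$ separately. The only cosmetic difference is your threshold $2^{-2\varepsilon j}/\gamma_j^{(h)}$ in place of the paper's $\epsilon_j2^{-\delta j}/\gamma_j^{(h)}$, which changes nothing in the conclusion.
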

\begin{proof}
 Let $\seq{\epsilon} \in \lp^q$ be such that $\epsilon_j\not=0$ and
 \[
  (\sum_{\lambda\in \Lambda_j} (\sigma_j 2^{-jd/r} d_\lambda^p)^r)^{1/r} \le \epsilon_j,
 \]
for all $j\in\N$. Let us first consider the case $r=\infty$; if $\os(\seq{\gamma}^{(h)}/\seq{\sigma})<0$, there exists $\delta>0$ such that $\gamma_j^{(h)} 2^{\delta j} d_j^p(x_0)\le C \epsilon_j$ for any $j$ and $h_{p,q}(x_0)\ge h$ for all $x_0\in\Rd$. As a consequence, we have
\[
 \ha\{ x_0\in \Rd : h_{p,q}(x_0)<h \} =-\infty = d +r\os(\seq{\gamma}^{(h)}/\seq{\sigma}).
\]
On the other hand, if $\os(\seq{\gamma}^{(h)}/\seq{\sigma})\ge 0$,
\[
 \ha\{ x_0\in \Rd : h_{p,q}(x_0)<h \} \le d \le d +r\os(\seq{\gamma}^{(h)}/\seq{\sigma}).
\]

Now, suppose $r<\infty$, fix $h>-d/p$ and define, given $j\in\N$ and $\delta>0$ sufficiently small,
\[
 E_{j,\delta}^h := \{ \lambda\in \Lambda_j : d_\lambda^p \ge \epsilon_j 2^{-\delta j}/\gamma_j^{(h)} \}
\]
and set $n= \# E_{j,\delta}^h$. As $f\in \Os^{\seq{\sigma}}_{p,r,q}$, we have
\[
 \sigma_j^r 2^{-jd} n (2^{-\delta j}/\gamma_j^{(h)})^r
 \le \epsilon_j^{-r} \sigma_j^r 2^{-jd} \sum_{\lambda\in E_{j,\delta}^h} (d_\lambda^p)^r \le 1,
\]
so that
\[
 n\le 2^{jd} (2^{-\delta j}/ \gamma_j^{(h)})^{-r}/\sigma_j^r.
\]
Now, define $\Lambda_{j,\delta}^h$ as the set of the dyadic cubes $\lambda\in \Lambda_j$ such that there exists a neighbor $\lambda'\in 3\lambda$ that belongs to $E_{j,\delta}^h$. Finally, define
\[
 F_\delta^h := \limsup_j \{x_0\in \Rd: \lambda_j(x_0) \in \Lambda_{j,\delta}^h\}.
\]
If $x_0$ does not belong to $F_\delta^h$, then there exists $J\in\N$ such that $j\ge J$ implies $\lambda_j(x_0)\not\in \Lambda_{j,\delta}^h$ and, from what we have obtained for $n$, there exists a constant $C>0$ for which $j\ge J$ implies
\[
 2^{\delta j} \gamma_j^{(h)} d_j^p(x_0) \le C \epsilon_j,
\]
and therefore
\begin{equation}\label{eq:incl fdh}
 \{x_0\in \Rd: h_{p,q}(x_0)< h\} \subset F_\delta^h. 
\end{equation}

Let $\alpha>0$, set $j_1 := \inf \{ j: \sqrt{d}2^{-j}<\alpha\}$ and
\[
 E_\delta := \{ \lambda\in \Lambda_{j,\delta}^h : j\ge j_1\}.
\]
It is easy to check that $E_\delta$ is an $\alpha$-covering of $F_\delta^h$; given $s,\eta>0$, we have
\begin{align*}
 \sum_{\lambda\in E_\delta} \text{diam}(\lambda)^s
 &\le \sum_{j\ge j_0} \# F_j^h (\sqrt{d} 2^{-j})^s \\
 &\le C \sum_{j\ge j_0} 2^{(d-s)j} (2^{-\delta j}/\gamma_j^{(h)})^{-r}/\sigma_j^r \\
 &\le C' \sum_{j\in\N} 2^{rj(\os(\seq{\gamma}^{(h)}/\seq{\sigma})+\delta+\eta} 2^{(d-s)j}.
\end{align*}
As a consequence, we have
\[
 \ha (F_\delta^h) \le d +  r(\os(\frac{\seq{\gamma}^{(h)}}{\seq{\sigma}})+\delta+\eta),
\]
for any $\eta>0$ and we can conclude thanks to (\ref{eq:incl fdh}).
\end{proof}
Of course, for the natural choices of the families of admissible sequences \cite{Kreit:18}, $h\mapsto \os(\seq{\gamma}^{(h)})$ is continuous; in such a case, the previous result can be improved.
\begin{Rmk}
 If there exists a sequence $\seq{\epsilon}$ converging to $0^+$ such that
 \[
  \os(\frac{\seq{\gamma}^{(h+\epsilon_j)}}{\seq{\sigma}}) \to \os(\frac{\seq{\gamma}^{(h)}}{\seq{\sigma}}),
 \]
 we have
 \[
  \ha\{ x_0\in \Rd: h_{p,q}(x_0)\le h\} \le d+ r\os(\frac{\seq{\gamma}^{(h)}}{\seq{\sigma}}).
 \]
\end{Rmk}

\begin{Prop}
 If $\seq{\sigma}$ is an admissible sequence such that $\us(\seq{\sigma})>0$ and $\us(\seq{\sigma}) -d/r> -d/p$, we have $\Os^{\seq{\sigma}}_{p,r,q}= B^{\seq{\sigma}}_{r,q}$.
\end{Prop}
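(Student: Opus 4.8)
The plan is to read off both $\Os^{\seq{\sigma}}_{p,r,q}$ and $B^{\seq{\sigma}}_{r,q}$ from the wavelet coefficients and compare the resulting conditions. By the wavelet characterization of the generalized Besov spaces (see \cite{Almeida:05}), $f\in B^{\seq{\sigma}}_{r,q}$ if and only if $(C_k)_k\in\lp^q$ and
\[
 \Big(\sum_{j\in\N}\Big(\sum_{\lambda\in\Lambda_j}(\sigma_j\,2^{-jd/r}|c_\lambda|)^r\Big)^{q/r}\Big)^{1/q}<\infty .
\]
This differs from the definition of $\Os^{\seq{\sigma}}_{p,r,q}$ only in that the $p$-leaders $d^p_\lambda$ are replaced by the bare coefficients $|c_\lambda|$. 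Since the choice $j'=j$ in Definition~\ref{def:wleaders} gives $d^p_\lambda\ge|c_\lambda|$, the inclusion $\Os^{\seq{\sigma}}_{p,r,q}\subset B^{\seq{\sigma}}_{r,q}$ is immediate and uses no assumption on $\seq{\sigma}$. So, setting (for $r<\infty$)
\[
 a_j:=2^{-jd/r}\Big(\sum_{\lambda\in\Lambda_j}(d^p_\lambda)^r\Big)^{1/r},\qquad b_j:=2^{-jd/r}\Big(\sum_{\lambda\in\Lambda_j}|c_\lambda|^r\Big)^{1/r},
\]
with the corresponding suprema when $r=\infty$, it remains to establish the reverse estimate $\|(\sigma_j a_j)_j\|_{\lp^q}\le C\,\|(\sigma_j b_j)_j\|_{\lp^q}$, which yields $B^{\seq{\sigma}}_{r,q}\subset\Os^{\seq{\sigma}}_{p,r,q}$.

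For this I would first expand, for $\lambda\in\Lambda_j$,
\[
 d^p_\lambda=\sup_{j'\ge j}2^{(j-j')d/p}\Big(\sum_{\lambda'\in\Lambda_{j'},\ \lambda'\subset\lambda}|c_{\lambda'}|^p\Big)^{1/p},
\]
bound $(d^p_\lambda)^r=\sup_{j'\ge j}(\cdots)^r$ by the sum over $j'\ge j$, sum over $\lambda\in\Lambda_j$, and regroup the resulting double sum over the $\lambda'\in\Lambda_{j'}$ (each contained in a single $\lambda\in\Lambda_j$). Since a cube of $\Lambda_j$ contains exactly $2^{(j'-j)d}$ cubes of $\Lambda_{j'}$, the power-mean inequality $(\sum_{i=1}^N y_i^p)^{1/p}\le N^{(1/p-1/r)_+}(\sum_{i=1}^N y_i^r)^{1/r}$ (with $x_+:=\max\{x,0\}$; it is the nesting $\lp^r\hookrightarrow\lp^p$ for $r\le p$ and H\"older's inequality for $r\ge p$) gives, after collecting the powers of $2$,
\[
 a_j^r\le\sum_{j'\ge j}2^{d(1-r/p)_+(j'-j)}\,b_{j'}^r ,\qquad\text{whence}\qquad (\sigma_j a_j)^r\le\sum_{j'\ge j}\Big(\tfrac{\sigma_j}{\sigma_{j'}}\Big)^{r}2^{d(1-r/p)_+(j'-j)}(\sigma_{j'}b_{j'})^r .
\]
When $r=\infty$ the same computation gives directly $a_j\le\sup_{j'\ge j}b_{j'}$, hence $\sigma_j a_j\le\sup_{j'\ge j}(\sigma_j/\sigma_{j'})\sigma_{j'}b_{j'}$.

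The crucial point is that the geometric factor in this convolution decays. Using the standard bound $\sigma_j/\sigma_{j'}\le 1/\underline{\sigma}_{j'-j}\le C\,2^{-(j'-j)(\us(\seq{\sigma})-\epsilon)}$, the general term of the sum carries a factor $2^{e(j'-j)}$ with $e:=r(\epsilon-\us(\seq{\sigma}))+d(1-r/p)_+$. If $r\ge p$ (in particular $r=\infty$), then $e=r(\epsilon-\us(\seq{\sigma}))<0$ for $\epsilon$ small because $\us(\seq{\sigma})>0$; if $r<p$ (in particular $p=\infty$), then $e=r\epsilon-r\us(\seq{\sigma})+d-dr/p<0$ for $\epsilon$ small precisely because $\us(\seq{\sigma})>d/r-d/p$, i.e.\ $\us(\seq{\sigma})-d/r>-d/p$. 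Thus, for some $\delta>0$,
\[
 (\sigma_j a_j)^r\le C\sum_{m\ge0}2^{-\delta m}(\sigma_{j+m}b_{j+m})^r ,
\]
and a discrete convolution estimate in $\lp^{q/r}$ — Minkowski's inequality when $q\ge r$, subadditivity of $t\mapsto t^{q/r}$ when $q<r$, and the elementary bound when $q=\infty$, the case $r=\infty$ being handled by $\sup_m\le\sum_m$ — yields $\|(\sigma_j a_j)_j\|_{\lp^q}\le C'\|(\sigma_j b_j)_j\|_{\lp^q}$. Together with $(C_k)_k\in\lp^q$ this gives $f\in\Os^{\seq{\sigma}}_{p,r,q}$, hence the announced equality (the estimates also show that each $d^p_\lambda$ is finite, and the coefficient conditions make $f$ a well-defined element of $\cs'$, so the two spaces are compared inside $\cs'$). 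The main obstacle is the bookkeeping: one has to split according to the sign of $1-r/p$ — i.e.\ to use the $\lp^p$--$\lp^r$ nesting in the correct direction — and then check that, in each case, the exponent produced by combining the dyadic gains hidden in the leaders with the Boyd-index estimate for $\seq{\sigma}$ is strictly negative under exactly the two stated hypotheses; the endpoint cases $p=\infty$, $r=\infty$, $q=\infty$ and $q<r$ each cost one extra line.
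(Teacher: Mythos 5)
Your argument is correct and follows essentially the same route as the paper's proof: the trivial embedding $\Os^{\seq{\sigma}}_{p,r,q}\hookrightarrow B^{\seq{\sigma}}_{r,q}$, then bounding the leaders by replacing the supremum over $j'$ with a sum, comparing the inner $\lp^p$ and $\lp^{r}$ sums over the $2^{(j'-j)d}$ subcubes, and absorbing the resulting geometric factor via the Boyd-index bound $\sigma_j/\sigma_{j'}\le C2^{-(j'-j)(\us(\seq{\sigma})-\epsilon)}$ under exactly the two stated hypotheses. The only difference is presentational: you merge the paper's two cases $r\le p$ and $p<r$ into one estimate via the exponent $(1/p-1/r)_+$ and phrase the final step as a discrete convolution bound in $\lp^{q/r}$, where the paper separately invokes subadditivity for $q\le r$ and H\"older for $r<q$.
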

\begin{proof}
 We obviously have $\Os^{\seq{\sigma}}_{p,r,q}\hookrightarrow B^{\seq{\sigma}}_{r,q}$. If $f$ belongs to $B^{\seq{\sigma}}_{r,q}$, we have
 \begin{align}
  \lefteqn{(\sum_{\lambda\in \Lambda_j} (\sigma_j 2^{-j d/r} d_\lambda^p)^r)^{q/r}} & \nonumber \\
  &\le \Big(\sum_{\lambda\in \Lambda_j} (\sigma_j 2^{-jd/r})^q  \sum_{j'\ge j} \big(\sum_{\lambda'\in \Lambda_{j'}, \lambda'\subset \lambda} (2^{(j-j')d/p} |c_{\lambda'}|)^p \big)^{r/p} \Big)^{q/r}, \label{eq:os=bes}
 \end{align}
 for any $j\in\N$.

 Let us first suppose that $r\le p$; in this case, (\ref{eq:os=bes}) is bounded by
 \[
  (\sum_{j'\ge j} (\sigma_j \sigma_{j'}^{-1} 2^{(j-j')d/p} 2^{(j'-j)d/r})^r \sum_{\lambda'\in \Lambda_{j'}} (\sigma_{j'} 2^{-j' d/r} |c_{\lambda'}|)^r )^{q/r}.
 \]
 Let $\epsilon>0$ be such that $\us(\seq{\sigma}) -\epsilon -d/r > -d/p$; there exists a constant $C_\epsilon>0$ such that
 \[
  \sigma_j \sigma_{j'}^{-1} < C_\epsilon 2^{(\us(\seq{\sigma})-\epsilon)(j-j')}.
 \]
 If $q\le r$, (\ref{eq:os=bes}) is bounded by
 \[
  C ( \sum_{j'\ge j} (2^{(\us(\seq{\sigma})+\epsilon-d/r-d/r)(j-j')})^q (\sum_{\lambda'\in \Lambda_{j'}} (\sigma_{j'} 2^{-j' d/r} |c_{\lambda'}|)^r )^{q/r} ).
 \]
 As $f$ belongs to $B^{\seq{\sigma}}_{r,q}$, we can write
 \[
  (\sum_{j\in \N}(\sum_{\lambda\in \Lambda_j} (\sigma_j 2^{-j d/r} d_\lambda^p)^r)^{q/r})^{1/q}
  \le C (\sum_{j'\in\N} (\sum_{\lambda'\in \Lambda_{j'}} (\sigma_{j'} 2^{-j' d/r} |c_{\lambda'}|)^r)^{q/r})^{1/q},
 \]
 wich implies $f\in \Os^{\seq{\sigma}}_{p,r,q}$. If $r<q$, by denoting $s$ the conjugate exponent of $q/r$, we can use H\"older's inequality to bound (\ref{eq:os=bes}) by
 \begin{align*}
  \lefteqn{C (\sum_{j'\ge j} (2^{-\us(\seq{\sigma}) + \epsilon -d/p -d/r)(j'-j)})^{rs/2})^{q/(rs)}} & \\
  &\quad \Big( \sum_{j'\ge j} (2^{-\us(\seq{\sigma}) + \epsilon -d/p -d/r)(j'-j)})^{q/(2r)} (\sum_{\lambda'\in \Lambda_{j'}} (\sigma_{j'} 2^{-j' d/r} |c_{\lambda'}|)^r)^{q/r} \Big) \\
  & \le C \Big( \sum_{j'\ge j} (2^{-\us(\seq{\sigma}) + \epsilon -d/p -d/r)(j'-j)})^{q/(2r)} (\sum_{\lambda'\in \Lambda_{j'}} (\sigma_{j'} 2^{-j' d/r} |c_{\lambda'}|)^r)^{q/r} \Big),
 \end{align*}
 so that $f$ belongs to $\Os^{\seq{\sigma}}_{p,r,q}$, as in the previous case.

 We still have to consider the case $p<r$; by Jensen's inequality, we can bound (\ref{eq:os=bes}) by
 \begin{align*}
  \lefteqn{(\sum_{\lambda\in \Lambda_j} (\sigma_j 2^{-j d/r} )^r \sum_{j\ge j'} \sum_{\lambda'\in \Lambda_{j'}, \lambda'\subset \lambda} 2^{(j-j')d} |c_{\lambda'}|^r)^{q/r}} & \\
  &\le (\sum_{j'\ge j} (\sigma_j /\sigma_{j'})^r \sum_{\lambda'\in \Lambda_{j'}} (\sigma_{j'} 2^{-j' d/r} |c_{\lambda'}|)^r)^{q/r},
 \end{align*}
 so that we can conclude as in the other cases.
\end{proof}

\subsection{The notion of prevalence}
In this section, we very briefly introduce the notion of prevalence (see \cite{Christensen:72,Hunt:92,Hunt:94} for more details).

In $\Rd$, it is well known that if one can associate a probability measure $\mu$ to a Borel set $B$ such that $\mu(B+x)$ vanishes for very $x\in\Rd$, then the Lebesgue measure $\leb(B)$ of $B$ also vanishes. For the notion of prevalence, this property is turned into a definition in the context of infinite-dimensional spaces.
\begin{Def}\label{def:prev}
 Let $E$ be a complete metric vector space; a borel set $B$ of $E$ is Haar-null if there exists a compactly-supported probability measure $\mu$ such that $\mu(B+x)=0$, for every $x\in E$. A subset of $E$ is Haar-null if it is contained in a Haar-null Borel set; the complement of a Haar-null set is a prevalent set.
\end{Def}
Is $E$ is finite-dimensional, $B$ is Haar-null if and only if $\leb(B)=0$; if $E$ is infinite-dimensional, the compact sets of $E$ are Haar-null. Moreover, it can be shown that a translated of a Haar-null set is Haar-null and that a prevalent set is dense in $E$. Finally, the intersection of a countable collection of prevalent sets is prevalent.

Let us make some remarks about how to show that a set is Haar-null. A common choice for the measure in Definition~\ref{def:prev} is the Lebesgue measure on the unit ball of a finite-dimensional subset $E'$ of $E$. For such a choice, one has to show that $\leb(B\cap (E'+x))$ vanishes for every $x$. If $E$ is a function space, one can choose a random process $X$ whose sample paths almost surely belong to $E$. In this case, one can show that a property only holds on a Haar-null set by showing that the sample path $X$ is such that, for any $f\in E$, $X_t+f$ almost surely does not satisfy the property.

If a property holds on a prevalent set, we will say that it holds almost everywhere from the prevalence point of view.

\subsection{A multifractal formalism associated to the generalized Besov spaces}
We propose here the following formula to estimate the spectrum $\ms_{p,q}$ related to a function $f\in B^{\seq{\sigma}}_{r,s}$:
\[
 \ms_{p,q}(h) = d+ r \os(\frac{\seq{\gamma}^{(h)}}{\seq{\sigma}})
\]
and show that, under natural smooth conditions, this equality is satisfied almost everywhere from a prevalent point of view.

\begin{Def}\label{def:zeta}
 An admissible sequence $\seq{\sigma}$ and a family of admissible sequences $\seq{\gamma}^{(\cdot)}$ are compatible for $p,q,r,s\in [1,\infty]$ with $s\le q$ if
 \begin{itemize}
  \item $\us(\seq{\sigma})>0$,
  \item $\us(\seq{\sigma})-d/r> -d/p$
  \item $\zeta$, defined on $(-d/p,\infty)$ by
 \[
  \zeta(h):= \us(\frac{\seq{\gamma}^{(h)}}{\seq{\sigma}})= \os(\frac{\seq{\gamma}^{(h)}}{\seq{\sigma}}),
 \]
 is a non decreasing continuous function such that
 \[
 \{h>-d/p: \zeta(h)<-d/r\} \not=\emptyset.
 \]
 \end{itemize}
 We call $\zeta$ the ratio function. We will also frequently use the quantity
 \[
  h_{\min}(r) := \sup\{h> -d/p: \zeta(h)< -d/r \}.
 \]
\end{Def}
The following remark stresses the importance of $h_{\min}$.
\begin{Rmk}
 Suppose that $\seq{\sigma}$ and $\seq{\gamma}^{(\cdot)}$ are compatible as in the previous definition. If $f$ belongs to $B^{\seq{\sigma}}_{p,q}$, there exists $\eta>0$ such that $B^{\seq{\sigma}}_{p,q} \hookrightarrow B^\eta_{p,q}$. For $\lambda\in \Lambda_j$ and $j'\ge j$, we have
 \begin{align*}
  (\sum_{\lambda'\in \Lambda_{j'}, \lambda'\subset \lambda} (2^{(j-j')d/p} |c_{\lambda'}|)^p )^{1/p}
  &\le 2^{j d/p} (\sum_{\lambda'\in \Lambda_{j'}} (\sigma_{j'} 2^{-j' d/p} |c_{\lambda'}|)^p )^{1/p} \sigma_{j'}^{-1} \\
  &\le 2^{j d/p}\epsilon_{j'} \sigma_{j'}^{-1},
 \end{align*}
 for a sequence $\seq{\epsilon}\in \lp^q$. As a consequence, there exists $\eta>0$ and a sequence $\seq{\xi}\in \lp^q$ given by Lemma~\ref{lem:reste} such that, for $\lambda\in\Lambda_j$,
 \[
  d_\lambda^p \le C \sum_{j'\ge j} 2^{j d/p} \epsilon_{j'} \sigma_{j'}^{-1}
  \le C \xi_j 2^{-\eta j} /\gamma_j^{(h)}
 \]
 for all $h>-d/p$ such that $\os(\seq{\gamma}^{(h)}/\seq{\sigma})<- d/p$. As a consequence, one has $h_{p,q}(x_0) \ge h_{\min}(p)$, for any $x_0\in \Rd$.

 In the same spirit, for $r\le p$, one has $B^{\seq{\sigma}}_{r,q} \hookrightarrow B^{\seq{\theta}}_{p,q}$, where $\seq{\theta}$ is the admissible sequence defined by $\theta_j := 2^{(d/p-d/r)j} \sigma_j$ ($j\in\N$). As $\us(\seq{\sigma}) -d/r> -d/p$ implies $\us(\seq{\theta})>0$, there exists $\eta>0$ such that $B^{\seq{\sigma}}_{r,q} \hookrightarrow B^{\eta}_{p,q}$ and $h_{p,q}(x_0) \ge h_{\min}(r)$, for any $x_0\in \Rd$.

 That being done, if $p<r$ then, for any $f\in B^{\seq{\sigma}}_{r,q}$,
 \[
  h_{p,q}(x_0) \ge h_{r,q}(x_0) \ge h_{\min}(r).
 \]
 Thus, if $f\in B^{\seq{\sigma}}_{r,s}$, we have $f\in B^{\seq{\sigma}}_{r,q}$ and $h_{p,q}(x_0) \ge h_{\min}(r)$.
\end{Rmk}
From what we have done so far, we get the following corollary.
\begin{Cor}
 Let $p,q,r,s\in [1,\infty]$, $\seq{\sigma}$ be an admissible sequence and $\seq{\gamma}^{(\cdot)}$ be a family of admissible sequences such that $\seq{\sigma}$ and $\seq{\gamma}^{(\cdot)}$ are compatible. If $f$ belongs to $B^{\seq{\sigma}}_{r,s}$, then
 \begin{itemize}
  \item $\{x\in \Rd: h_{p,q}(x_0)\le h\}= \emptyset$, \quad for any $h<h_{\min}(r)$,
  \item $\ha(\{x\in \Rd: h_{p,q}(x_0)\le h\})\le d+ r g(h)$, \quad for any $h\ge h_{\min}(r)$.
 \end{itemize}
\end{Cor}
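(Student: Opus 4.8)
The plan is to read both assertions off results already established, with no new estimate required. For the first bullet I would invoke the Remark immediately preceding the corollary: since $\seq{\sigma}$ and $\seq{\gamma}^{(\cdot)}$ are compatible and $f\in B^{\seq{\sigma}}_{r,s}$, that remark gives $h_{p,q}(x_0)\ge h_{\min}(r)$ for every $x_0\in\Rd$. Hence, when $h<h_{\min}(r)$, no point can satisfy $h_{p,q}(x_0)\le h$, so the set $\{x_0\in\Rd:h_{p,q}(x_0)\le h\}$ is empty.

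For the second bullet I would apply the Proposition that bounds $\ha\{x_0:h_{p,q}(x_0)<h\}$ by $d+r\,\os(\seq{\gamma}^{(h)}/\seq{\sigma})$, whose hypotheses ask that $f$ lie in both $\Os^{\seq{\sigma}}_{p,r,q}$ and some $B^\eta_{p,q}$. Membership in $\Os^{\seq{\sigma}}_{p,r,q}$ is immediate: compatibility provides $\us(\seq{\sigma})>0$ and $\us(\seq{\sigma})-d/r>-d/p$, so the Proposition identifying $\Os^{\seq{\sigma}}_{p,r,q}$ with $B^{\seq{\sigma}}_{r,q}$ applies, and since $s\le q$ the inclusion $\lp^s\hookrightarrow\lp^q$ gives $B^{\seq{\sigma}}_{r,s}\hookrightarrow B^{\seq{\sigma}}_{r,q}=\Os^{\seq{\sigma}}_{p,r,q}$. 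Having applied the Proposition, I would upgrade the strict inequality to $h_{p,q}(x_0)\le h$ using the Remark that follows it: compatibility makes the ratio function $\zeta$ continuous, so for $h\ge h_{\min}(r)$ (which forces $h>-d/p$, since $h_{\min}(r)>-d/p$ by the non-emptiness clause of Definition~\ref{def:zeta}) any sequence $\epsilon_j\downarrow 0$ satisfies $\os(\seq{\gamma}^{(h+\epsilon_j)}/\seq{\sigma})\to\os(\seq{\gamma}^{(h)}/\seq{\sigma})$, which is exactly what that remark needs to yield $\ha\{x_0:h_{p,q}(x_0)\le h\}\le d+r\,\os(\seq{\gamma}^{(h)}/\seq{\sigma})$; equivalently one uses $\{h_{p,q}\le h\}\subset\{h_{p,q}<h+\epsilon_j\}$ together with monotonicity of $\ha$ and passes to the limit.

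The one genuinely delicate point is producing the auxiliary exponent $\eta>0$ with $f\in B^\eta_{p,q}$, since the global Besov embedding $B^{\seq{\sigma}}_{r,s}\hookrightarrow B^\eta_{p,q}$ only holds when $r\le p$. When $r\le p$, the Remark preceding the corollary supplies it directly, through $B^{\seq{\sigma}}_{r,q}\hookrightarrow B^{\seq{\theta}}_{p,q}$ with $\theta_j=2^{(d/p-d/r)j}\sigma_j$ and $\us(\seq{\theta})>0$. When $p<r$, I would reduce to the diagonal case instead: the containment $\ep^{\seq{\gamma}^{(h)}}_{r,q}(x_0)\subset\ep^{\seq{\gamma}^{(h)}}_{p,q}(x_0)$ forces $h_{p,q}(x_0)\ge h_{r,q}(x_0)$ pointwise, hence $\{h_{p,q}\le h\}\subset\{h_{r,q}\le h\}$, and running the argument of the previous paragraph with $p$ replaced by $r$ — where $B^{\seq{\sigma}}_{r,s}\hookrightarrow B^{\seq{\sigma}}_{r,q}\hookrightarrow B^\eta_{r,q}$ does hold and $\Os^{\seq{\sigma}}_{r,r,q}=B^{\seq{\sigma}}_{r,q}$ — still delivers the bound $d+r\,\os(\seq{\gamma}^{(h)}/\seq{\sigma})$. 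Beyond checking that compatibility for $(p,q,r,s)$ implies the weaker compatibility used in this diagonal reduction, the whole argument is pure bookkeeping: it is an assembly of the Propositions and Remarks proved above, and this index bookkeeping (the conditions $\us(\seq{\sigma})>0$, $\us(\seq{\sigma})-d/r>-d/p$, $s\le q$, and $h,h+\epsilon_j>-d/p$) is the only thing to handle with care.
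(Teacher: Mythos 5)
Your proof is correct and follows essentially the same route as the paper, which offers no explicit proof and simply states the corollary as an assembly of the preceding Remark (for the emptiness claim via $h_{p,q}(x_0)\ge h_{\min}(r)$), the Proposition bounding $\ha\{h_{p,q}<h\}$ combined with $\Os^{\seq{\sigma}}_{p,r,q}=B^{\seq{\sigma}}_{r,q}$, and the continuity Remark to pass from $<h$ to $\le h$. Your extra care about supplying the $B^\eta_{p,q}$ hypothesis (via the $\seq{\theta}$-embedding when $r\le p$ and the diagonal reduction $h_{p,q}\ge h_{r,q}$ when $p<r$) actually fills a detail the paper glosses over, and is consistent with how the paper itself handles the two cases in the Remark preceding the corollary.
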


To show that, under some general hypothesis, the last upper bound is optimal for a prevalent set of functions in $B^{\seq{\sigma}}_{r,s}$, we need the following definition.
\begin{Def}
 Let $x_0\in \Rd$ and $r>0$; the strict cone of influence above $x_0$ of width $r$ is
 \[
  \mathcal{C}_{x_0}(r) := \{(j,k) \in \N\times \Z^d: \| \frac{k}{2^j} -x_0\|_{\infty} < \frac{r}{2^j} \},
 \]
 where $\|x-x_0\|_{\infty}$ is the Chebyshev distance between $x$ and $x_0$.
\end{Def}
This definitions is related to the wavelets as follows: in this context, we set
\[
 \K_{x_0}(r) := \{\lambda_{j,k}^{(i)}\in \Lambda : (j,k)\in \mathcal{C}_{x_0}(r)\}.
\]
The following result explains why $\K_{x_0}$ can be seen as a cone of influence for the wavelets.
\begin{Prop}
 If $f$ belongs to $\ep^{\seq{\sigma}}_{p,q}(x_0)$, then
 \[
 ( \sigma_j \sum_{\lambda\in \Lambda_j\cap \K_{x_0}(r)} |c_\lambda|^p)^{1/p})_j \in \lp^q.
 \]
\end{Prop}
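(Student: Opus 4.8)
The plan is to mimic the proof of Theorem~\ref{pro:wav1}; the present situation is actually simpler, since we only have to control the coefficients at the single scale $j$ (there is no supremum over finer scales) and $\K_{x_0}(r)$ meets each $\Lambda_j$ in only a bounded number of cubes, so the auxiliary functions $g_{n,j}$ used in Theorem~\ref{pro:wav1} are not even needed: a one-coefficient H\"older estimate will do. First I would invoke Proposition~\ref{prop13} to fix a sequence of polynomials $(P_j)_j$ of degree at most $n:=\fl{\os(\seq{\sigma})}$ and a sequence $\seq{\epsilon}\in\lp^q$ such that $\sigma_j 2^{jd/p}\|f-P_j\|_{\Lp^p(B(x_0,2^{-j}))}\le\epsilon_j$ for every $j$ (with $P_j\equiv 0$ when $\os(\seq{\sigma})<0$). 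Then I would record two elementary facts: the cardinality of $\Lambda_j\cap\K_{x_0}(r)$ is bounded by a constant $N_r$ depending only on $d$ and $r$, since $\lambda^{(i)}_{j,k}\in\K_{x_0}(r)$ forces $\|k-2^jx_0\|_\infty<r$; and, choosing $j_1\in\N$ with $2^{j_1}\ge 2^{j_0}+\sqrt{d}\,r$, every wavelet $\psi_\lambda$ with $\lambda\in\Lambda_j\cap\K_{x_0}(r)$ and $j\ge j_1$ is supported in $B(x_0,2^{j_1-j})=B(x_0,2^{-(j-j_1)})$.

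The core step is a single-coefficient bound. For $j\ge j_1$ and $\lambda=\lambda^{(i)}_{j,k}\in\Lambda_j\cap\K_{x_0}(r)$, the vanishing moments of the wavelets (up to order $n$, exactly as in Theorem~\ref{pro:wav1}) allow one to write $c_\lambda=2^{jd}\int_{B(x_0,2^{-(j-j_1)})}(f-P_{j-j_1})\,\overline{\psi_\lambda}\,dx$, whence, by H\"older's inequality together with $\|\psi_\lambda\|_{\Lp^{p'}}=2^{-jd/p'}\|\psi^{(i)}\|_{\Lp^{p'}}$ (with $p'$ the conjugate exponent of $p$ and the usual reading for $p\in\{1,\infty\}$),
\[
 |c_\lambda|\le 2^{jd/p}\,\|f-P_{j-j_1}\|_{\Lp^p(B(x_0,2^{-(j-j_1)}))}\,\|\psi^{(i)}\|_{\Lp^{p'}}\le C\,2^{j_1 d/p}\,\epsilon_{j-j_1}\,\sigma_{j-j_1}^{-1},
\]
the last inequality coming from the defining estimate for $P_{j-j_1}$ on $B(x_0,2^{-(j-j_1)})$, and $C$ depending only on the wavelet basis.

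Summing the $p$-th powers over the at most $N_r$ cubes of $\Lambda_j\cap\K_{x_0}(r)$ and using that $\seq{\sigma}$ is admissible, hence $\sigma_j\le C_{j_1}\sigma_{j-j_1}$, I would obtain
\[
 \sigma_j\Big(\sum_{\lambda\in\Lambda_j\cap\K_{x_0}(r)}|c_\lambda|^p\Big)^{1/p}\le C'\,\epsilon_{j-j_1}
\]
for all $j\ge j_1$; since $(\epsilon_{j-j_1})_{j\ge j_1}\in\lp^q$ and the finitely many remaining terms ($j<j_1$) are finite because $f\in\Lpl^p$ and $\Lambda_j\cap\K_{x_0}(r)$ is finite, the whole sequence lies in $\lp^q$, which is the claim.

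The only point requiring any care is the usual compatibility between the number of vanishing moments of the wavelet basis and the degree $n=\fl{\os(\seq{\sigma})}$, handled here precisely as in Theorem~\ref{pro:wav1}; everything else is bookkeeping, and the computation above is essentially the one already carried out there, restricted to a single scale. For a wavelet basis in the Schwartz class the argument adapts by replacing compact support with fast decay: one subtracts the single polynomial $P$ of Theorem~\ref{pro:uniq pol}, splits $\Lambda_j\cap\K_{x_0}(r)$ together with the complementary cubes into annular families as in the treatment of wavelets in the Schwartz class above, and absorbs the far contributions via the pointwise decay bound for $\sum|\psi_{\lambda'}|^p$ established there.
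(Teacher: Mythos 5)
Your argument is correct, and it reaches the same intermediate estimate as the paper, namely $\sigma_j\bigl(\sum_{\lambda\in\Lambda_j\cap\K_{x_0}(r)}|c_\lambda|^p\bigr)^{1/p}\le C\epsilon_{j-j_1}$ for $j\ge j_1$, with the same two preliminary observations (the choice of $j_1$ with $2^{j_1}\ge 2^{j_0}+\sqrt{d}\,r$ and the resulting inclusion $\supp\psi_\lambda\subset B(x_0,2^{j_1-j})$). The difference lies in how that estimate is obtained. The paper simply cites the proof of Theorem~\ref{pro:wav1}, where the bound on $s_{n,j}=\sum_{\lambda'}|c_{\lambda'}|^p$ is derived by duality against the auxiliary function $g_{n,j}=\sum|c_{\lambda'}|^{p-1}\sign(c_{\lambda'})\psi_{\lambda'}$, together with the finite-overlap partition $E_1,\dots,E_{C_*}$; that machinery is what makes the estimate uniform over the possibly $2^{(j-n)d}$-many subcubes of a coarser cube. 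You correctly observe that here the sum runs over a single scale and that $\#(\Lambda_j\cap\K_{x_0}(r))$ is bounded by a constant $N_r$ independent of $j$ (since $\|k-2^jx_0\|_\infty<r$), so the duality construction is unnecessary: a one-coefficient H\"older bound $|c_\lambda|\le 2^{jd/p}\|f-P_{j-j_1}\|_{\Lp^p(B(x_0,2^{j_1-j}))}\|\psi^{(i)}\|_{\Lp^{p'}}$, summed over at most $N_r$ cubes, loses only the harmless factor $N_r^{1/p}$. This is a genuinely more elementary and self-contained derivation of the same inequality; what it gives up is only brevity, since the paper's version is a one-line citation, and the constant it produces depends on $r$ through $N_r$, which is irrelevant for the conclusion. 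The remaining ingredients (vanishing moments to subtract $P_{j-j_1}$, admissibility to pass from $\sigma_{j-j_1}$ to $\sigma_j$, finiteness of the first $j_1$ terms --- the paper's ``Archimedean property'' remark) match the paper's intent exactly.
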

\begin{proof}
 Choose $j_1\in \N$ such that $\sqrt{d} r + 2^{j_0}\le 2^{j_1}$; for $j\ge j_1$, if $\lambda \in \Lambda_j$ also belongs to $\K_{x_0}(r)$, then the support of $\psi_\lambda$ is included in $B(x_0,2^{j_1-j})$. From the proof of Theorem~\ref{pro:wav1}, we know that there exists a sequence $\seq{\epsilon}\in \lp^q$ such that
 \[
  \sigma_j (\sum_{\lambda\in \Lambda_j\cap \K_{x_0}(r)} |c_\lambda|^p)^{1/p} \le \epsilon_j,
 \]
 for any $j\ge j_1$. The conclusion then comes from the Archimedean property of the real line.
\end{proof}

Given a dyadic cube $\lambda=\lambda_{j,k}^{(i)}$, let us denote by $k(\lambda)$ and $j(\lambda)$ the numbers such that $k(\lambda)/2^{j(\lambda)}$ is the dyadic irreducible form of $k/2^j$. For $\alpha\in [1,\infty]$, let us set
\[
 h_*(\alpha) := \zeta^{-1} (\frac{d}{\alpha r} - \frac{d}{r}).
\]
We have $h_*(\alpha)\ge h_{\min}(r)= h_*(\infty)$. If $\zeta(h)>d/\alpha r - d/r$, choose $\epsilon_0>0$ such that $\zeta(h)-\epsilon_0> d/\alpha r -d/r$ and let $m_0\in \N$ be such that
\begin{equation}\label{eq:def m0}
 d- (\frac{d}{\alpha r} - \frac{d}{r} -\zeta(h) +\epsilon_0) 2^{d m_0} \alpha <0.
\end{equation}
Let us split each cube $\lambda\in \Lambda_j$ into $2^{d m_0}$ cubes of scale $j+m_0$ and for each $n\in \{1,\ldots, 2^{d m_0}\}$, choose a unique subcube $\lambda^{(n)}$ of $\lambda$ such that $n\not= n'$ implies $\lambda^{(n)} \not= \lambda^{(n')}$. From this, we can consider a function $g^{(n)}$ such that its wavelet coefficients $c_\lambda$  satisfy the following conditions:
\[
 c_{\lambda^{(n)}} := j^{-a_0} 2^{j d/r} 2^{- j(\lambda)d/r} \sigma_j^{-1}
 \qquad \text{if } \lambda\in \Lambda_j\cap [0,1]^d,
\]
with $a_0 := 1+1/r+1/s$ and $c_\lambda:=0$ if $\lambda$ is not of the form $\lambda^{(n)}$ for some $n$.
\begin{Prop}
 For all $n\in \{1,\ldots, 2^{d m_0}\}$, $g^{(n)}$ belongs to $B^{\seq{\sigma}}_{r,s}$.
\end{Prop}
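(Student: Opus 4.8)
The plan is to verify directly that $g^{(n)} \in B^{\seq{\sigma}}_{r,s}$ by estimating the wavelet-norm
\[
 \Big( \sum_{j\in\N} \big( \sum_{\lambda\in \Lambda_j} (\sigma_j 2^{-jd/r} |c_\lambda|)^r \big)^{s/r} \Big)^{1/s},
\]
which characterizes $B^{\seq{\sigma}}_{r,s}$ by the wavelet characterization recalled before Definition~\ref{def:gen bes}. (One also needs the low-frequency coefficients $(C_k)_k$ to be tame, but since all $c_\lambda$ vanish outside $[0,1]^d$ and are uniformly controlled, this is immediate; I would dispose of it in one sentence.) First I would fix the scale $j$ and count, for each admissible denominator $j(\lambda)$, how many cubes $\lambda^{(n)}\in\Lambda_j\cap[0,1]^d$ have that denominator: since $k(\lambda)/2^{j(\lambda)}$ ranges over dyadic rationals of level exactly $j(\lambda)$, there are at most $2^{d\, j(\lambda)}$ such cubes at scale $j$ (and one should be slightly careful that $j(\lambda)\le j$, with $j(\lambda)$ taking each value $0,1,\dots,j$). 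Plugging in $|c_{\lambda^{(n)}}| = j^{-a_0} 2^{jd/r} 2^{-j(\lambda)d/r}\sigma_j^{-1}$, the quantity $\sigma_j 2^{-jd/r}|c_{\lambda^{(n)}}|$ collapses to $j^{-a_0} 2^{-j(\lambda)d/r}$, so
\[
 \sum_{\lambda\in\Lambda_j} (\sigma_j 2^{-jd/r}|c_\lambda|)^r
 \le \sum_{i=0}^{j} 2^{di}\, j^{-a_0 r}\, 2^{-id}
 = (j+1)\, j^{-a_0 r}.
\]

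From here it is bookkeeping: $\big(\sum_{\lambda\in\Lambda_j}(\sigma_j 2^{-jd/r}|c_\lambda|)^r\big)^{s/r} \le (j+1)^{s/r} j^{-a_0 s}$, and summing over $j$ gives a series whose general term is comparable to $j^{s/r - a_0 s}$. Since $a_0 = 1 + 1/r + 1/s$, we have $a_0 s = s + s/r + 1$, hence $s/r - a_0 s = -s - 1 < -1$, so the series converges; this is exactly why the exponent $a_0$ was chosen. I would then remark that the case $r=\infty$ or $s=\infty$ is handled the same way with the usual sup-conventions (the inner sum becomes $\sup_i j^{-a_0}2^{-id/r}\le j^{-a_0}$, and the outer $\ell^s$-norm of $(j^{-a_0})_j$ is finite as soon as $a_0>1/s$, which holds).

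The only genuinely delicate point is the counting step: one must be sure that, among the $2^{dm_0}$ designated subcubes at scale $j$, the number sharing a fixed irreducible denominator $2^i$ ($0\le i\le j$) is $O(2^{di})$ uniformly, and in particular that no cube is counted with denominator larger than its own scale. This is a purely combinatorial fact about dyadic rationals in $[0,1]^d$ and about the selection rule $n\mapsto\lambda^{(n)}$ (which only refines, never changes the denominator beyond what scale forces), so I expect it to be routine once stated carefully; but it is the place where a sloppy argument would go wrong, so I would spell it out. Everything else reduces to the elementary estimate above and the convergence of $\sum_j j^{-s-1}$.
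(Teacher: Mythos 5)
Your proposal is correct and follows essentially the same route as the paper: group the nonzero coefficients at a given scale by the denominator level $l=j(\lambda)$, use the count of at most $2^{dl}$ such cubes to reduce the inner sum to $(j+1)j^{-a_0 r}$, and conclude from $a_0>1/r+1/s$ that the outer $\ell^{s/r}$-sum converges (the paper performs the bookkeeping at scale $j+m_0$, which only introduces the harmless constant $\overline{\sigma}_{m_0}2^{-m_0 d/r}$ that your scale-$j$ normalization absorbs). Your explicit treatment of the $r=\infty$, $s=\infty$ cases and of the low-frequency coefficients is a small addition the paper leaves implicit.
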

\begin{proof}
 For $j\ge 1$, we have
 \begin{align*}
  \lefteqn{(\sum_{\lambda \in \Lambda_{j+m_0}} (\sigma_{j+m_0} 2^{-(j+m_0)d/r} |c_\lambda| )^r)^{1/r}} & \\
  &= (\sum_{l=0}^j \sum_{\dbline{\lambda\in \Lambda_j\cap [0,1]^d}{j(\lambda)=l}} (\sigma_{j+ m_0} 2^{-(j+m_0)d/r} j^{-a_0} 2^{j d/r} 2^{-l d/r} \sigma_j^{-1})^r)^{1/r}
 \end{align*}
 and
 \begin{align*}
  (\sum_{\lambda\in \Lambda_{j+m_0}} (\sigma_{j+m_0} 2^{-(j+m_0) d/r} |c_\lambda|)^r)^{1/r}
  &\le (\sum_{l=0}^j (\overline{\sigma}_{m_0} 2^{-(j+m_0) d/r} j^{-a_0})^r)^{1/r} \\
  &\le C j^{-a_0+1/r}.
 \end{align*}
 As $a_0>1/r+1/s$, we get
 \[
  (\sum_{j\ge 1} (\sum_{\lambda\in \Lambda_{j+m_0}} (\sigma_{j+m_0} 2^{-(j+m_0)d/r} |c_\lambda|)^r)^{s/r})^{1/s}
  \le C (\sum_{j\ge 1} j^{-s(a_0-1/r)})^{1/s} <\infty,
 \]
 which is sufficient to conclude.
\end{proof}
\begin{Def}
 Let $\alpha\ge 1$; a point $x_0\in [0,1]^d$ is $\alpha$-approximable by dyadics if there exists two sequences $\seq{k}$ and $\seq{j}$ of natural numbers with $k_n< 2^{j_n}$ for any $n\in \N$ such that
 \[
  \| x_0 -\frac{k_n}{j_n}\|_\infty \le \frac{1}{2^{\alpha j_n}},
 \]
 for any $n\in \N$.
\end{Def}
Let us denote the set of points of $[0,1]^d$ which are $\alpha$-approximable by dyadics by $E^\alpha$ and define
\[
 E_j^\alpha := \{ x_0\in [0,1]^d : \exists k\in \{0,\ldots, 2^j -1\}^d \text{ such that } \| x_0 - \frac{k}{2^j}\|_\infty \le \frac{1}{2^{\alpha j}} \},
\]
so that $E^\alpha = \limsup_j E_j^\alpha$. We also define
\[
 E_{j,k}^\alpha := \{ x_0\in [0,1]^d: \| x_0- \frac{k}{2^j}\|_\infty \le \frac{1}{2^{\alpha j}} \},
\]
for $k\in \{0,\ldots, 2^j-1\}^d$, in order to have
\[
 E_j^\alpha = \bigcup_{l\in\{0,\ldots, 2^j-1\}^d} E_{j,k}^\alpha.
\]
Finally, set $E^\infty= \cap_{\alpha\ge 1} E^\alpha$; this set in not empty since it contains the dyadic numbers.
\begin{Prop}
 Given $C>0$, $j\in \N$ and $k\in \{0,\ldots, 2^j-1\}^d$, the set
 \begin{align*}
  \lefteqn{F_{j,k}^{\alpha,C} (h)} & \\
  &:= \{ f\in B^{\seq{\sigma}}_{r,s}: ( \exists x\in E_{j,k}^\alpha : \forall n\in \N \forall \lambda\in \Lambda_n\cap \K_x(2^{m_0+1}), |c_\lambda|\le C/\gamma_n^{(h)}) \}
 \end{align*}
 is closed in $B^{\seq{\sigma}}_{r,s}$.
\end{Prop}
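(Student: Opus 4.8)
The plan is to prove closedness by a direct sequential argument, exploiting the compactness of $E_{j,k}^\alpha$ and the fact that the cones $\K_x(2^{m_0+1})$ are defined by a \emph{strict} inequality. Concretely, I would take a sequence $(f_m)_m$ in $F_{j,k}^{\alpha,C}(h)$ converging in $B^{\seq{\sigma}}_{r,s}$ to some $f$; since $B^{\seq{\sigma}}_{r,s}$ is complete, $f$ already belongs to it, so the only thing to verify is that $f$ satisfies the defining property of $F_{j,k}^{\alpha,C}(h)$. For each $m$, fix a witness $x_m\in E_{j,k}^\alpha$ such that $|c^{(m)}_\lambda|\le C/\gamma_n^{(h)}$ for every $n\in\N$ and every $\lambda\in\Lambda_n\cap\K_{x_m}(2^{m_0+1})$, where $(c^{(m)}_\lambda)_\lambda$ denotes the wavelet coefficients of $f_m$.

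Two standard ingredients then enter. First, $E_{j,k}^\alpha=\{x_0\in[0,1]^d:\|x_0-k/2^j\|_\infty\le 2^{-\alpha j}\}$ is compact, so after passing to a subsequence we may assume $x_m\to x$ for some $x\in E_{j,k}^\alpha$. Second, each wavelet-coefficient functional $g\mapsto c_\lambda$ is continuous on $B^{\seq{\sigma}}_{r,s}$: indeed, the wavelet characterization of these spaces bounds $|c_\lambda|$ for $\lambda\in\Lambda_n$ by a constant (depending on $n$) times the $B^{\seq{\sigma}}_{r,s}$-norm. Consequently $c^{(m)}_\lambda\to c_\lambda$ for every dyadic cube $\lambda$, where $(c_\lambda)_\lambda$ are the wavelet coefficients of $f$.

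It then remains to check that $x$ is a valid witness for $f$. I would fix $n\in\N$ and a cube $\lambda=\lambda^{(i)}_{n,k'}\in\Lambda_n\cap\K_x(2^{m_0+1})$, which by definition means $\|k'/2^n-x\|_\infty<2^{m_0+1}/2^n$, a strict inequality. Since $x_m\to x$, for all $m$ large enough (depending on the fixed $\lambda$) one still has $\|k'/2^n-x_m\|_\infty<2^{m_0+1}/2^n$, that is $\lambda\in\K_{x_m}(2^{m_0+1})$, so that $|c^{(m)}_\lambda|\le C/\gamma_n^{(h)}$; letting $m\to\infty$ gives $|c_\lambda|\le C/\gamma_n^{(h)}$. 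As $n$ and $\lambda$ were arbitrary, $x$ witnesses $f\in F_{j,k}^{\alpha,C}(h)$, which shows the set is closed.

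The only genuinely delicate point is the interplay between $x_m\to x$ and the two cones $\K_{x_m}(2^{m_0+1})$ and $\K_x(2^{m_0+1})$: one must be sure that membership of a \emph{fixed} cube in the cone passes to the limit. This is precisely why the strict inequality in the definition of $\mathcal{C}_{x_0}(r)$ is essential — it is an open condition, so a cube lying in $\K_x(2^{m_0+1})$ automatically lies in $\K_{x_m}(2^{m_0+1})$ for $m$ large, and since for each fixed cube we only need to pass to the limit in $m$, no uniformity in $\lambda$ is required. Everything else (compactness of $E_{j,k}^\alpha$, continuity of the coefficient functionals, completeness of $B^{\seq{\sigma}}_{r,s}$) is routine.
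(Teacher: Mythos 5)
Your proof is correct and follows essentially the same route as the paper's: extract a convergent subsequence of witnesses using the compactness of $E_{j,k}^\alpha$, use the continuity of the wavelet-coefficient functionals on $B^{\seq{\sigma}}_{r,s}$ to get $c_\lambda^{(m)}\to c_\lambda$, and exploit the strict inequality defining the cone so that a fixed cube in $\K_x(2^{m_0+1})$ eventually lies in $\K_{x_m}(2^{m_0+1})$. Your direct passage to the limit for each fixed cube is in fact slightly cleaner than the paper's $(C+\delta)$-then-$\delta\to 0^+$ bookkeeping, but the ideas are the same.
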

\begin{proof}
 Let $(f_l)_l$ be a sequence of functions of $F_{j,k}^{\alpha,C}$ such that $f_l\to f$ in $B^{\seq{\sigma}}_{r,s}$ and denote by $c_\lambda^{(l)}$ (resp.\ $c_\lambda$) the wavelet coefficients of $f_l$ (resp.\ $f$). Since
 \[
  B^{\os(\seq{\sigma})+\gamma}_{r,s} \hookrightarrow B^{\seq{\sigma}}_{r,s} \hookrightarrow B^{\us(\seq{\sigma})-\gamma}_{r,s},
 \]
 for any $\gamma>0$ and as the application which associates to a function its wavelet coefficients is continuous in the Besov spaces, we have $c_\lambda^{(l)} \to c_\lambda$ for all $\lambda\in \Lambda$.

 For $l\in\N$, let $x_l \in E_{j,k}^\alpha$ be such that, for all $n\in\N$ and $\lambda\in \Lambda_n\cap \K_{x_l}(2^{m_0+1})$, we have $|c_\lambda^{(l)}|\le C/ \gamma_n^{(h)}$. As $E_{j,k}^\alpha$ is compact, we can suppose that the sequence $(x_j)_l$ converges to a point $x_0$ of $E_{j,k}^\alpha$. Now, let us fix $N\in \N$ and $\delta>0$; if $l$ is sufficiently large, we have $\K_{x_0}(2^{m_0+1}) \subset K_{x_l}(2^{m_0+1})$ and, for $n\le N$, we have, for $\lambda\in \Lambda_n\cap \K_{x_l}(2^{m_0+1})$, $|c_\lambda^{(l)}- c_\lambda|\le \delta/\gamma_n^{(h)}$ as $c_\lambda^{(l)}$ converges to $c_\lambda$. Also, we have $|c_\lambda^{(l)}|\le C/\gamma_n^{(h)}$ for $\lambda\in \Lambda_n\cap \K_{x_l}(2^{m_0+1})$. As a consequence, $\lambda\in \Lambda_n\cap \K_{x_0}(2^{m_0+1})$ implies
 \[
  |c_\lambda|\le (C+\delta)/\gamma_n^{(h)},
 \]
 for all $n\le N$. Taking the limit for $N\to \infty$ and $\delta\to 0^+$ leads to $f\in F_{j,k}^{\alpha,C}(h)$.
\end{proof}
Let us set
\[
 F_j^{\alpha,C}(h) := \bigcup_{k\in\{0,\ldots, 2^j-1\}^d} F_{j,k}^{\alpha,C}(h)
\]
and $F^{\alpha,C}(h):= \limsup_j F_j^{\alpha,C} (h)$.
All these sets are obviously Borel sets.
\begin{Prop}
 The set $F^{\alpha,C}(h)$ is a Haar-null Borel set.
\end{Prop}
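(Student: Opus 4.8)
The plan is to realise $F^{\alpha,C}(h)$ as a Haar-null set by testing it against a compactly supported probability measure built from the functions $g^{(1)},\dots,g^{(N)}$, where $N:=2^{dm_0}$; this is the usual transversality argument of \cite{Fraysse:06,Kreit:18}. Let $T\colon[0,1]^N\to B^{\seq{\sigma}}_{r,s}$ be the continuous linear map $T(t):=\sum_{n=1}^{N}t_n g^{(n)}$, well defined because each $g^{(n)}$ belongs to $B^{\seq{\sigma}}_{r,s}$, and let $\mu$ be the push-forward by $T$ of the Lebesgue measure $\leb$ on $[0,1]^N$. Since $[0,1]^N$ is compact and $T$ is continuous, $\mu$ is a compactly supported Borel probability measure on $B^{\seq{\sigma}}_{r,s}$, and $F^{\alpha,C}(h)$ is Borel, so it suffices to show $\mu(F^{\alpha,C}(h)+f)=0$ for every $f\in B^{\seq{\sigma}}_{r,s}$; that is, after replacing $f$ by $-f$, that
\[
 \leb\Big(\big\{t\in[0,1]^N: f+\sum_{n=1}^{N} t_n g^{(n)}\in F^{\alpha,C}(h)\big\}\Big)=0 .
\]
Writing $A_j:=\{t\in[0,1]^N: f+\sum_n t_n g^{(n)}\in F_j^{\alpha,C}(h)\}$ and using $F^{\alpha,C}(h)=\limsup_j F_j^{\alpha,C}(h)$, the set above equals $\limsup_j A_j$, so by the Borel--Cantelli lemma it is enough to prove $\sum_j\leb(A_j)<\infty$. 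As $F_j^{\alpha,C}(h)$ is the union of the $2^{jd}$ sets $F_{j,k}^{\alpha,C}(h)$ over $k\in\{0,\dots,2^j-1\}^d$, we have $\leb(A_j)\le 2^{jd}\max_k\leb(A_{j,k})$, $A_{j,k}$ being defined analogously; so everything hinges on an estimate for $\leb(A_{j,k})$.

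Fix $j,k$ and choose an intermediate scale $j'$ with $j\le j'\le\alpha j$ and $j'=\alpha j+O(1)$ (for instance $j'=\fl{\alpha j}$; the role of this choice and of $m_0$ being prescribed by \eqref{eq:def m0}). Let $\Lambda^*\in\Lambda_{j'}$ be the dyadic cube with lower corner $k/2^j$ — it is contained in $[0,1]^d$ — and for $n\in\{1,\dots,N\}$ let $\Lambda^{*,(n)}\in\Lambda_{j'+m_0}$ be its $n$-th selected subcube, in the sense of the construction of the $g^{(n)}$. Suppose $t\in A_{j,k}$: there is $x_0\in E_{j,k}^\alpha$ such that, at every scale $\nu$, each wavelet coefficient of $f+\sum_n t_n g^{(n)}$ whose cube lies in $\K_{x_0}(2^{m_0+1})$ has modulus at most $C/\gamma_\nu^{(h)}$. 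Because $\|x_0-k/2^j\|_\infty\le 2^{-\alpha j}\le 2^{-j'}$ and $k/2^j$ is a corner of $\Lambda^*$, every cube $\Lambda^{*,(n)}$ lies in $\K_{x_0}(2^{m_0+1})$; moreover, among $g^{(1)},\dots,g^{(N)}$ only $g^{(n)}$ has a nonzero wavelet coefficient at $\Lambda^{*,(n)}$, and that coefficient equals $(j')^{-a_0}2^{j'd/r}2^{-j(\Lambda^*)d/r}\sigma_{j'}^{-1}$, with $j(\Lambda^*)\le j$. Hence the cone bound at scale $\nu=j'+m_0$ forces the coordinate $t_n$ into an interval of length at most
\[
 L_j:=\frac{2C\,(j')^{a_0}\,2^{-(j'-j)d/r}\,\sigma_{j'}}{\gamma_{j'+m_0}^{(h)}} .
\]
The key point is that this interval depends only on $f$, $j$, $k$ (together with the fixed $j'$, $m_0$), not on $x_0$ nor on the other coordinates; hence $A_{j,k}$ is contained in a single product of $N$ intervals of $[0,1]$, each of length at most $\min(L_j,1)$, so $\leb(A_{j,k})\le\min(L_j,1)^N$.

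It remains to control $L_j$. Since $\seq{\gamma}^{(h)}/\seq{\sigma}$ is admissible with both Boyd indices equal to $\zeta(h)$, the relations recalled for admissible sequences give, for any $\epsilon>0$, $\sigma_{j'}/\gamma_{j'+m_0}^{(h)}\le C_\epsilon 2^{-j'(\zeta(h)-\epsilon)}$; combined with $j'=\alpha j+O(1)$ this yields $L_j\le C'(\alpha j)^{a_0}2^{-j[(\alpha-1)d/r+\alpha(\zeta(h)-\epsilon)]}$. Therefore, for $j$ large,
\[
 \leb(A_j)\le 2^{jd}L_j^{\,N}\le C''(\alpha j)^{Na_0}\,2^{\,j\,(d-N[(\alpha-1)d/r+\alpha(\zeta(h)-\epsilon)])},
\]
and \eqref{eq:def m0}, with $\epsilon<\epsilon_0$, is exactly what guarantees that the exponent $d-N[(\alpha-1)d/r+\alpha(\zeta(h)-\epsilon)]$ is strictly negative. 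Hence $\sum_j\leb(A_j)<\infty$, and Borel--Cantelli gives $\leb(\limsup_j A_j)=0$, as required.

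The only genuine obstacle is the geometric bookkeeping buried in the third paragraph: one must check that the cube $\Lambda^*$, placed at the carefully tuned scale $j'\approx\alpha j$, really lies in the cone $\K_{x_0}(2^{m_0+1})$ for \emph{every} admissible $x_0$ — this is precisely where $\alpha$-approximability of $x_0$ by $k/2^j$, rather than mere membership in $\lambda_{j,k}$, is used — and one must track the reduced dyadic scale $j(\lambda)$ governing the sizes of the coefficients of the $g^{(n)}$, so that the exponent in the last display becomes negative exactly through \eqref{eq:def m0}. The measure-theoretic and combinatorial parts are otherwise routine, and the particular choice $a_0=1+1/r+1/s$ enters only through the already-established fact that each $g^{(n)}$ lies in $B^{\seq{\sigma}}_{r,s}$.
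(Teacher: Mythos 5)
Your proposal is correct and follows essentially the same route as the paper: the same finite-dimensional probe spanned by $g^{(1)},\dots,g^{(2^{dm_0})}$, the same localization at the selected subcubes of the scale-$\fl{\alpha j}$ cube anchored at $k/2^j$ (which is where $\alpha$-approximability enters), the same lower bound on $|c'^{(n)}_{\lambda^{(n)}}|$ via $j(\lambda)\le j$, and the same count $2^{jd}\cdot(\text{length})^{2^{dm_0}}$ made summable by (\ref{eq:def m0}). Your phrasing of the key step as containment of $A_{j,k}$ in a product of intervals, combined with Borel--Cantelli, is only a cosmetic repackaging of the paper's diameter bound (\ref{eq:HNB}) followed by letting $J\to\infty$ in the union over $j\ge J$.
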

\begin{proof}
 Set $m_1 := 2^{m_0 d}$ and let us fix $j\in\N$ and $k\in \{0,\ldots, 2^j -1\}$; for $f\in B^{\seq{\sigma}}_{r,s}$, suppose that there exist two points of $\R^{m_1}$, $a^{(1)}=(a_1^{(1)},\ldots,a_{m_1}^{(1)})$ and $a^{(2)}=(a_1^{(2)},\ldots,a_{m_1}^{(2)})$, such that
 \[
  f_l := f+ \sum_{m=1}^{m_1} a_m^{(l)} g^{(m)}
 \]
 belongs to $F_{j,k}^{\alpha,C}(h)$ $(l\in \{1,2\})$. For $l\in\{1,2\}$, let us also denote by $c_\lambda^{(l)}$ the wavelet coefficient of $f_l$ associated to the dyadic cube $\lambda\in \Lambda$ and let $x_l$ be a point of $E_{j,k}^\alpha$ such that $\lambda \in \Lambda_{\fl{\alpha j}}\cap \K_{x_l}(2^{m_0+1})$ implies $|c^{(l)}_\lambda| \le C/\gamma_{\fl{\alpha j}}^{(h)}$.

 For $\lambda'\in \Lambda_{\fl{\alpha j}+m_0}$ satisfying $\lambda'\subset \lambda_{\fl{\alpha j},k}^{(i)}$, we have
 \[
  |c_{\lambda'}^{(l)}| \le C/ \gamma_{\fl{\alpha j}+m_0}^{(h)}.
 \]
 As a consequence, we get, by denoting $c'^{(m)}_\lambda$ the wavelet coefficient of $g^{(m)}$ associated to $\lambda$,
 \[
  |a^{(1)}_m - a^{(2)}_m|
  = |a^{(1)}_m - a^{(2)}_m|\, | c'^{(m)}_{\lambda^{(m)}} |/ |c'^{(m)}_{\lambda^{(m)}} |
  \le 2C/ (\gamma_{\fl{\alpha j}+m_0}^{(h)} | c'^{(m)}_{\lambda^{(m)}} |),
 \]
 for any $m\in\{1,\ldots, m_1\}$. On the other hand, for $j\ge j(\lambda)$, we have
 \begin{align*}
  |c'^{(m)}_{\lambda^{(n)}}|
  &= \fl{\alpha j}^{-a_0} 2^{\fl{\alpha j} d/q} 2^{- j(\lambda) d/q} \sigma_{\fl{\alpha j}}^{-1} \\
  &\ge C' \fl{\alpha j}^{-a_0} 2^{\fl{\alpha j} d/q} 2^{-\fl{\alpha j} d/\alpha q} \sigma_{\fl{\alpha j}}^{-1},
 \end{align*}
 so that there exists a constant $C''>0$ for which
 \begin{equation}\label{eq:HNB}
  \| a^{(1)}- a^{(2)}\|_\infty
  \le C'' \fl{\alpha j}^{-a_0} 2^{\fl{\alpha j}(d/\alpha q- d/q)} \sigma_{\fl{\alpha j}} /\gamma_{\fl{\alpha j}}^{(h)}.
 \end{equation}

 That being done, for $f\in B^{\seq{\sigma}}_{r,s}$, we have
 \begin{align*}
  \{a \in \R^{m_1} : f+ a g \in F^{\alpha,C}(h)\}
  &\subset \bigcup_{j\ge J} \{a \in \R^{m_1} : f+ a g \in F_j^{\alpha,C}(h)\} \\
  &\subset \bigcup_{j\ge J} \bigcup_{k\in \{0,\ldots, 2^j-1\}^d} \{a \in \R^{m_1} : f+ a g \in F_{j,k}^{\alpha,C}(h)\},
 \end{align*}
 for any $J\in \N$. Thus, from (\ref{eq:HNB}), we get
 \begin{align*}
  \lefteqn{\leb (\{a \in \R^{m_1} : f+ a g \in F^{\alpha,C}(h)\})} & \\
  &\le \sum_{j\ge J} 2^{jd} (C'' \fl{\alpha j}^{a_0} 2^{\fl{\alpha j} (d/\alpha q -d/q)} \sigma_{\fl{\alpha j}} /\gamma_{\fl{\alpha j}}^{(h)})^M \\
  &\le C''' \sum_{j\ge J} \fl{\alpha j}^{a_0 m_1} 2^{j(d- m_1\alpha (\zeta(h) - d/\alpha q -d/q -\epsilon_0))}.
 \end{align*}
 Letting $J$ going to $\infty$, (\ref{eq:def m0}) implies
 \[
  \leb (\{a \in \R^{m_1} : f+ a g \in F^{\alpha,C}(h)\})=0,
 \]
 hence the conclusion.
\end{proof}

\begin{Thm}
 Let $p,q,r,s\in [1,\infty]$ with $s\le q$, $\seq{\sigma}$ be an admissible sequence and $\seq{\gamma}^{(\cdot)}$ be a family of admissible sequences compatible with $\seq{\sigma}$. From the prevalence point of view, for almost every $f\in B^{\seq{\sigma}}_{r,s}$, $\ms_{p,q}$ is defined on $I=[\zeta^{-1}(-d/r), \zeta^{-1}(0)]$ and
 \[
  \ms_{p,q}(h) = d + r \zeta(h),
 \]
 for any $h\in I$.

 Moreover, for almost every $x_0\in \Rd$, we have $h_{p,q}(x_0)= \zeta^{-1}(0)$.
\end{Thm}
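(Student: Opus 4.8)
The inequality $\ms_{p,q}(h)\le d+r\zeta(h)$, together with the fact that $\ms_{p,q}$ can be finite only on $I$, is already in hand: it is the corollary stated above, which holds for \emph{every} $f\in B^{\seq{\sigma}}_{r,s}$ (recall that $\zeta$ is continuous, so $h_{\min}(r)=\zeta^{-1}(-d/r)$, and that $\zeta^{-1}(0)=h_*(1)$). What remains is to exhibit, on a prevalent set of functions, enough points realizing each value $h\in I$ and to check that no exponent exceeds $\zeta^{-1}(0)$. The bookkeeping device is the dyadic approximation rate $\tau(x_0):=\sup\{\alpha\ge 1:x_0\in E^\alpha\}$ of a point $x_0\in[0,1]^d$: the exponent $h_*(\alpha)$ is tuned so that $d+r\zeta(h_*(\alpha))=d/\alpha$, which is the Hausdorff dimension of the set of points with $\tau(x_0)=\alpha$ (an exact-order dyadic approximation set, whose dimension is a Jarn\'ik-type result), and as $\alpha$ runs through $[1,\infty]$ the value $h_*(\alpha)$ sweeps out $I$. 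The plan is therefore to prove that, for a prevalent $f$, one has $h_{p,q}(x_0)=h_*(\tau(x_0))$ for every $x_0\in[0,1]^d$ (the case of a general $x_0\in\Rd$ following by the same argument over a countable covering of $\Rd$ by unit cubes). This yields $\{x_0:h_{p,q}(x_0)=h\}\supset\{x_0:\tau(x_0)=\alpha(h)\}$ with $\alpha(h)$ determined by $h_*(\alpha(h))=h$, hence $\ms_{p,q}(h)\ge d/\alpha(h)=d+r\zeta(h)$ and, with the upper bound, equality; it also shows that $h_{p,q}$ never exceeds $h_*(1)=\zeta^{-1}(0)$, so $\ms_{p,q}$ is defined precisely on $I$. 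The final assertion is then immediate: $E^\alpha$ is Lebesgue-null for every $\alpha>1$ (Borel--Cantelli), so $\tau(x_0)=1$ and $h_{p,q}(x_0)=\zeta^{-1}(0)$ for Lebesgue-almost every $x_0$.

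For the upper bound $h_{p,q}(x_0)\le h_*(\tau(x_0))$, fix a countable dense set $A\subset[1,\infty)$. For $\alpha'<\alpha$ in $A$ and $C\in\n$, the set $F^{\alpha,C}(h_*(\alpha'))$ is a Haar-null Borel set by the proposition established above, whose hypothesis $\zeta(h_*(\alpha'))>d/(\alpha r)-d/r$ is exactly the condition $\alpha'<\alpha$. Since prevalence is stable under countable intersection, the set $G$ obtained by intersecting the complements of all these sets is prevalent. Let $f\in G$ and $x_0\in E^\alpha$ with $\alpha\in A$, and fix $\alpha'<\alpha$ in $A$. As $x_0\in E^\alpha=\limsup_j E_j^\alpha$, there is a large $j$ with $x_0\in E_{j,k}^\alpha$ for some $k$, and then $f\notin F_{j,k}^{\alpha,C}(h_*(\alpha'))$ for every $C$; by the definition of these sets this means that for each $C$ there are a scale $n$ and a cube $\lambda\in\Lambda_n\cap\K_{x_0}(2^{m_0+1})$ with $|c_\lambda|>C/\gamma_n^{(h_*(\alpha'))}$, and since the wavelet coefficients are finite these scales are unbounded as $C\to\infty$. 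Passing to the coarser scale $n-m_0-1$, at which $\lambda$ still meets $3\lambda_{n-m_0-1}(x_0)$, and using the admissibility of $\seq{\gamma}^{(h_*(\alpha'))}$, one gets $\gamma_{n-m_0-1}^{(h_*(\alpha'))}d_{n-m_0-1}^p(x_0)\ge c\,C$ for a constant $c>0$ independent of $C$ and $n$; hence $(\gamma_j^{(h_*(\alpha'))}d_j^p(x_0))_j$ is unbounded, in particular not in $\lp^q$, and Theorem~\ref{pro:wav1} gives $f\notin\ep^{\seq{\gamma}^{(h_*(\alpha'))}}_{p,q}(x_0)$, so $h_{p,q}(x_0)\le h_*(\alpha')$ by monotonicity of the family. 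Letting $\alpha'\uparrow\alpha$ and then $\alpha\uparrow\tau(x_0)$ through $A$, and using the continuity of $h_*$, yields $h_{p,q}(x_0)\le h_*(\tau(x_0))$ for all $x_0\in[0,1]^d$.

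The reverse inequality $h_{p,q}(x_0)\ge h_*(\tau(x_0))$ is the delicate half, and it is where genericity is really needed (a single cusp keeps $f$ in $B^{\seq{\sigma}}_{r,s}$ while lowering the exponent at an isolated point): one builds a further prevalent set on which the wavelet leaders $d_j^p(x_0)$ are small at \emph{most} scales whenever $\tau(x_0)\le\alpha$. The mechanism to make precise is that a cube contained in $3\lambda_n(x_0)$ and carrying a coefficient comparable to the maximal size permitted at scale $n$ by the $\seq{\sigma}$-Besov norm must have a small denominator, and $\tau(x_0)\le\alpha$ then confines such cubes to scales that are, up to a fixed multiplicative factor, multiples of that denominator's scale; between these distinguished scales the coefficients near $x_0$ are of generic size, far below the maximum. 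Summing the resulting estimates on $d_j^p(x_0)$ over $j$ and feeding them into the sufficient wavelet criterion (Theorem~\ref{pro:wav2}, applicable because $B^{\seq{\sigma}}_{r,s}$ embeds into some $B^\eta_{p,q}$, and whose logarithmic loss does not affect the exponent) yields $(\gamma_j^{(h)}d_j^p(x_0))_j\in\lp^q$ for every $h<h_*(\alpha)$, hence $h_{p,q}(x_0)\ge h_*(\alpha)$; taking $\alpha=\tau(x_0)$ and letting $h\uparrow h_*(\tau(x_0))$ finishes. Intersecting this prevalent set with $G$ gives $h_{p,q}(x_0)=h_*(\tau(x_0))$ for all $x_0$, and the spectrum formula and the almost-everywhere statement follow as above. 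I expect this last step --- quantifying how the Besov summability of the coefficients interacts with the arithmetic of the approximation rate so as to keep the leaders small off the distinguished scales --- to be the main obstacle; the remaining ingredients (the Haar-null estimate already obtained, the wavelet criteria of Section~3, the cone-of-influence proposition, and the classical dimension of the exact-order approximation sets) are then assembled routinely.
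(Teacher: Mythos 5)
Your upper-bound half reproduces the paper: the deterministic bound $\ms_{p,q}(h)\le d+r\zeta(h)$ from the corollary, and the prevalent bound $h_{p,q}(x_0)\le h_*(\alpha)$ for all $x_0\in E^\alpha$ obtained by intersecting the complements of the Haar-null sets $F^{\alpha,C}(h)$. The genuine gap is in the lower bound. You reduce it to proving $h_{p,q}(x_0)\ge h_*(\tau(x_0))$ at \emph{every} point of a further prevalent set, you give only a heuristic mechanism for this (``coefficients of generic size between the distinguished scales''), and you explicitly flag it as the main obstacle --- so the proof is not complete where it matters most. Moreover this pointwise statement is stronger than what is needed and is not what the paper proves: the deterministic estimate $\ha\{x_0:h_{p,q}(x_0)<h\}\le d+r\zeta(h)$ deliberately leaves room for a set of positive dimension on which the exponent drops below $h_*(\tau(x_0))$ for reasons unrelated to dyadic approximation, so there is no basis for expecting a prevalent $f$ to satisfy $h_{p,q}(x_0)=h_*(\tau(x_0))$ at literally every $x_0$.

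The paper's argument sidesteps the obstacle entirely and never bounds leaders from below at individual points. For each $\alpha$ it takes a measure $\mu_\alpha$ carried by $E^\alpha$, with $\mu_\alpha(E^\alpha)>0$, which annihilates every set of Hausdorff dimension $<d/\alpha$ (the Frostman/Jarn\'ik side of the dimension count you quote). The sets $F_n^\alpha=\{x_0:h_{p,q}(x_0)<h_*(\alpha)-1/n\}$ have dimension $<d/\alpha$ by the Proposition on $\Os^{\seq{\sigma}}_{p,r,q}$ (applicable since $B^{\seq{\sigma}}_{r,s}\subset B^{\seq{\sigma}}_{r,q}=\Os^{\seq{\sigma}}_{p,r,q}$ under the compatibility hypotheses), hence are $\mu_\alpha$-null; therefore $\mu_\alpha$-almost every point of $E^\alpha$ has exponent exactly $h_*(\alpha)$, and the mass distribution principle yields $\ms_{p,q}(h_*(\alpha))\ge d/\alpha=d+r\zeta(h_*(\alpha))$. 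Theorem~\ref{pro:wav2} is not needed at this stage, and the almost-everywhere statement comes from the case $\alpha=1$ with $\mu_1$ the Lebesgue measure, not from a pointwise identity $h_{p,q}=h_*(\tau(\cdot))$. To repair your write-up, replace the unproved pointwise lower bound by this measure-theoretic step; the rest of what you wrote then assembles correctly.
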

\begin{proof}
 We know that
 \[
  \{ f\in B^{\seq{\sigma}}_{r,s} : (\exists x_0\in E^\alpha: f\in \ep^{\seq{\sigma}^{(h)}}_{p,q}(x_0) \} \subset \bigcup_{l\in \N} F^{\alpha,l}(h), 
 \]
%
 so that, for any $\alpha\ge 1$ and any $h> h_*(\alpha)$, for almost every $f\in B^{\seq{\sigma}}_{r,s}$, we have, for every $x_0\in E^\alpha$, $h_{p,q}(x_0)\le h$.
 By countable intersection, we thus get that for almost every $f\in B^{\seq{\sigma}}_{r,s}$, we have, for every $x_0\in E^\alpha$, $h_{p,q}(x_0)\le h(\alpha)$. Let $f\in B^{\seq{\sigma}}_{r,s}$ be such that the preceding assertion holds.

 First, let us fix $\alpha\in (1,\infty)$; If $\seq{\alpha}$ is an increasing sequence of rational numbers converging to $\alpha$, the sequence $(E^{\alpha_n})_n$ is decreasing and $E^\alpha \subset \cup_n E^{\alpha_n}$. If $x_0$ belongs to $E^{\alpha_n}$, we have $h_{p,q}(x_0)\le h_*(\alpha_n)$ and thus $h_{p,q}(x_0)\le h_*(\alpha)$, for every $x_0\in E^\alpha$. Let $\mu_\alpha$ be a measure such that
 \begin{itemize}
  \item $\supp(\mu_\alpha) \subset E^\alpha$,
  \item $\mu_\alpha(E^\alpha) >0$,
  \item $\mu_\alpha(F)=0$ whenever $\ha(F)< d/\alpha$,
 \end{itemize}
 let us define
 \[
  F^\alpha := \{x_0\in [0,1]^d : h_{p,q}(x_0)< h_*(\alpha)\}
 \]
 and, for $n\in \N$,
 \[
  F_n^\alpha := \{x_0\in [0,1]^d : h_{p,q}(x_0)< h_*(\alpha)-1/n\}.
 \]
 For $n$ large enough, we have $h(\alpha)-1/n\ge -d/p$ and thus $\ha(F_n^\alpha) <d/\alpha$. Since $F^\alpha$ is included in a countable union of $\mu_\alpha$-measurable null sets, we have $\mu_\alpha(F^\alpha)=0$. As a consequence, we have
 \[
  \mu_\alpha(E^\alpha\setminus F^\alpha) \ge d + r \zeta (h_*(\alpha)).
 \]
 Since
 \[
  E^\alpha\setminus F^\alpha \subset \{ x_0\in [0,1]^d: h_{p,q}(x_0)=h_*(\alpha)\},
 \]
 we get
  \[
  \ms(h_*(\alpha)) = d + r \zeta (h_*(\alpha)).
 \]

 If $\alpha=\infty$, we know that $x_0\in E^\infty$ implies $h_{p,q}(x_0)\le h_*(\alpha_n)$ for any $n\in \N$ and thus $h_{p,q}(x_0)\le h_{\min}(r)$. As a consequence, the set
 \[
  \{ x_0\in [0,1]^d : h_{p,q}(x_0) = h_{\min}(r)\}
 \]
 is not empty.

 It remains to consider the case $\alpha=1$. In this case, $E^1=[0,1]^d$ and $\mu_1$ can be chosen to be the Lebesgue measure restricted on $[0,1]^d$. For $x_0\in E^1$, $h_{p,q}(x_0)\le h_*(1)$ and by the same argument as in the first case, we get
 \[
  \mu_1 (\{ x_0\in [0,1]^d : h_{p,q}(x_0) < h_*(1)\})=0,
 \]
 so that $E^1$ is equal to $E^1\setminus F^1$ almost everywhere.

 As the proof can be easily adapted to any translated of $[0,1]^d$, the conclusion follows by countable intersection.
\end{proof}

\begin{Thm}
 Let $p,q,r,s\in [1,\infty]$ with $s\le q$, $\seq{\sigma}$ be an admissible sequence and $\seq{\gamma}^{(\cdot)}$ be a family of admissible sequences compatible with $\seq{\sigma}$. Let $x_0$ be a point of $\Rd$; from the prevalence point of view, for almost every $f\in B^{\seq{\sigma}}_{r,s}$, we have $h_{p,q}(x_0)= \zeta^{-1}(-d/r)$.
\end{Thm}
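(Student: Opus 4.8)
The plan is to combine the universal lower bound for $h_{p,q}(x_0)$ with a Haar-null statement for the functions that are ``too regular'' at $x_0$. By the Corollary (equivalently the Remark) giving $h_{p,q}(x_0)\ge h_{\min}(r)$ on all of $B^{\seq{\sigma}}_{r,s}$, it suffices to prove that $A:=\{f\in B^{\seq{\sigma}}_{r,s}:h_{p,q}(x_0)>h_{\min}(r)\}$ is Haar-null, where $h_{\min}(r)=\zeta^{-1}(-d/r)$. Since the spaces $\ep^{\seq{\gamma}^{(h)}}_{p,q}(x_0)$ are nested in $h$, one may write $A=\bigcup_{h\in\Q,\,h>h_{\min}(r)}A_h$ with $A_h:=\{f\in B^{\seq{\sigma}}_{r,s}:f\in\ep^{\seq{\gamma}^{(h)}}_{p,q}(x_0)\}$, and each $A_h$ — hence $A$ — is a Borel set, by the continuity and semicontinuity arguments already used in the paper.

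Next I would exhibit a single probing function. Fix $a>1/s$ and, for each $j\ge 1$, a cube $\lambda_j\in\Lambda_j\cap\K_{x_0}(2)$ (for instance the dyadic cube of $\Lambda_j$ containing $x_0$), and let $g$ be the function whose only non-zero wavelet coefficients are
\[
 c_{\lambda_j}=2^{jd/r}\,\sigma_j^{-1}\,j^{-a}\qquad(j\ge 1),
\]
all coarse coefficients $C_k$ being zero. As exactly one coefficient is non-zero at each scale, the wavelet characterisation of $B^{\seq{\sigma}}_{r,s}$ gives $\big(\sum_{\lambda\in\Lambda_j}(\sigma_j2^{-jd/r}|c_\lambda|)^r\big)^{1/r}=j^{-a}$, so $g\in B^{\seq{\sigma}}_{r,s}$ because $\sum_jj^{-as}<\infty$; the compatibility hypotheses (notably $\us(\seq{\sigma})-d/r>-d/p$) moreover give $B^{\seq{\sigma}}_{r,s}\hookrightarrow\Lpl^p$, so $g\in\Lpl^p$. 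On the other hand, let $h>h_{\min}(r)$, so that $\zeta(h)>-d/r$. If $g$ belonged to $\ep^{\seq{\gamma}^{(h)}}_{p,q}(x_0)$, the Proposition controlling the cone-of-influence coefficients, applied to the admissible sequence $\seq{\gamma}^{(h)}$, would force
\[
 \Big(\gamma_j^{(h)}\big(\sum_{\lambda\in\Lambda_j\cap\K_{x_0}(2)}|c_\lambda|^p\big)^{1/p}\Big)_j\in\lp^q;
\]
but the only non-zero contribution to the inner sum is $|c_{\lambda_j}|^p$, so the $j$-th term equals $(\gamma_j^{(h)}/\sigma_j)\,2^{jd/r}\,j^{-a}$. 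Since $\seq{\gamma}^{(h)}/\seq{\sigma}$ is admissible with both Boyd indices equal to $\zeta(h)$, for $0<\epsilon<\zeta(h)+d/r$ there is $c>0$ with $\gamma_j^{(h)}/\sigma_j\ge c\,2^{j(\zeta(h)-\epsilon)}$, whence the $j$-th term is $\ge c\,2^{j(\zeta(h)+d/r-\epsilon)}j^{-a}\to\infty$, a contradiction. Hence $g\notin\ep^{\seq{\gamma}^{(h)}}_{p,q}(x_0)$ for every $h>h_{\min}(r)$, i.e.\ $g$ itself has $h_{p,q}(x_0)=h_{\min}(r)$.

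Finally I would run the standard one-parameter prevalence argument. Let $\mu$ be the image of the Lebesgue measure on $[0,1]$ under $t\mapsto tg$; it is a compactly supported probability measure on $B^{\seq{\sigma}}_{r,s}$. For any $f\in B^{\seq{\sigma}}_{r,s}$ and any rational $h>h_{\min}(r)$,
\[
 \mu(A_h+f)=\leb\{t\in[0,1]:\ tg-f\in\ep^{\seq{\gamma}^{(h)}}_{p,q}(x_0)\};
\]
if $t_1\neq t_2$ both lay in this set then, $\ep^{\seq{\gamma}^{(h)}}_{p,q}(x_0)$ being a vector space (Definition~\ref{def:gensps}), the difference $(t_1-t_2)g$ — and therefore $g$ — would belong to it, which is impossible. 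So this set has at most one element, $\mu(A_h+f)=0$, and by countable subadditivity $\mu(A+f)=0$ for every $f$. Thus $A$ is Haar-null and its complement, which by the lower bound equals $\{f:\ h_{p,q}(x_0)=h_{\min}(r)\}$, is prevalent; since $h_{\min}(r)=\zeta^{-1}(-d/r)$, this is the claim.

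The step I expect to demand the most care is the construction of $g$: its cone-of-influence coefficients must be placed at the critical size $2^{jd/r}/\sigma_j$, damped only by an $\lp^s$-summable factor $j^{-a}$, and one has to check at once that this keeps $g$ inside $B^{\seq{\sigma}}_{r,s}\hookrightarrow\Lpl^p$ — which is exactly where $\us(\seq{\sigma})-d/r>-d/p$ enters — and that it is incompatible with $\ep^{\seq{\gamma}^{(h)}}_{p,q}(x_0)$ for every $h$ with $\zeta(h)>-d/r$; everything afterward is the routine Haar-null machinery already employed in the paper. One also tacitly uses that ``$h>h_{\min}(r)$'' entails ``$\zeta(h)>-d/r$'', which holds because $\zeta$ is, as for the natural families of admissible sequences, strictly increasing, so that $\zeta^{-1}(-d/r)=h_{\min}(r)$ is unambiguous.
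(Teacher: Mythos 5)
Your proof is correct in substance but follows a genuinely different route from the paper's. The paper builds, for each $n$, a probe $g^{(n)}$ whose cone-of-influence coefficients are $\theta^{(n)}_j\sim 1/(\gamma_j^{(h_n)}(j+1)^{1+1/s})$ with $h_n=\zeta^{-1}(-d/r)+1/n$, covers $\{f: f\in \ep^{\seq{\gamma}^{(h_n)}}_{p,q}(x_0)\}$ by the \emph{closed} sets $F_{n_0}$ defined by explicit coefficient bounds on $\Lambda_j\cap\K_{x_0}(2)$, and gets ``at most one translate'' from the quantitative separation $|a-a'|\le 2n_0/j\to 0$; it then lets $n\to\infty$. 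You instead use a single probe $g$ whose cone coefficients sit at the critical Besov size $2^{jd/r}\sigma_j^{-1}j^{-a}$, show via the cone Proposition that $g\notin\ep^{\seq{\gamma}^{(h)}}_{p,q}(x_0)$ for \emph{every} $h>h_{\min}(r)$ at once, and derive ``at most one $t$'' from the linearity of $\ep^{\seq{\gamma}^{(h)}}_{p,q}(x_0)$. This is more economical (one probe, no $\theta^{(n)}$ bookkeeping), but it shifts the measurability burden: the paper's $F_{n_0}$ are closed by construction, whereas you apply the Haar-null definition directly to $A_h=\{f: f\in\ep^{\seq{\gamma}^{(h)}}_{p,q}(x_0)\}$ and only assert its Borel-ness. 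This is repairable without changing your argument --- replace $A_h$ by $\bigcup_{l}\{f:\forall j,\ \gamma_j^{(h)}(\sum_{\lambda\in\Lambda_j\cap\K_{x_0}(2)}|c_\lambda|^p)^{1/p}\le l\}$, a countable union of closed sets containing $A_h$ by the necessary wavelet condition, on which the same linearity-plus-unboundedness argument gives at most one $t$ --- but as written it is a gap of rigor. Finally, you are right to flag that ``$h>h_{\min}(r)\Rightarrow\zeta(h)>-d/r$'' needs $\zeta$ strictly increasing near $\zeta^{-1}(-d/r)$ (with merely non-decreasing $\zeta$ your contradiction degenerates when $\zeta(h)=-d/r$, since then the cone sequence is $\asymp j^{-a}\in\lp^q$); the paper's proof tacitly makes the same assumption through its use of $\zeta^{-1}$.
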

\begin{proof}
 Given $n\in\N$, let us define the admissible sequence $\seq{\theta}^{(n)}$ by
 \[
  \theta^{(n)}_j
  := \frac{1}{\gamma^{(\zeta^{-1}(-d/r)+1/n)}_j} \frac{1}{(j+1)^{1+1/s}},
 \]
 $j\in \N$. We can now define the function $g^{(n)}$ which is a function whose wavelet coefficients are
 \[
  c^{(n)}_\lambda := \left\{\begin{tabular}{ll}
          $\theta^{(n)}_j$ & if $\lambda\in \Lambda_j$ and $\lambda = \lambda_j(x_0)$ \\
          $0$ & if $\lambda\in \Lambda_j$ and $\lambda \not= \lambda_j(x_0)$
         \end{tabular}\right..
 \]
 Since, for $n\in \N$, there exists $C_n>0$ such that
 \[
  (\sum_{\lambda \in \Lambda_j} (\sigma_j 2^{j d/r} |c^{(n)}_\lambda |)^r)^{1/r}
  \le C_n /(j+1)^{1+1/s},
 \]
 $g^{(n)}$ belongs to $B^{\seq{\sigma}}_{r,s}$.
 
 Let us fix $n_0\in \N$ and define
 \[
  F_{n_0} := \{ f\in B^{\seq{\sigma}}_{r,s} : \forall j\in \N \forall \lambda\in \Lambda_j\cap \K_{x_0}(2), |c_\lambda| \le n_0 \theta^{(n)}_j/j \}.
 \]
 As shown before, $F_{n_0}$ is a Borel set. For $f\in B^{\seq{\sigma}}_{r,s}$ and $a,a'\in \R$ satisfying $f+a g^{(n)}\in F_{n_0}$ and $f+a' g^{(n)}\in F_{n_0}$, we get
 \[
  |a-a'| \le 2n_0/j,
 \]
 so that the Lebesgue measure of $\{a\in \R : f + a g^{(n)}\in F_{n_0}\}$ vanishes, implying that $F_{n_0}$ is Haar-null. As we have
 \[
  \{ f\in B^{\seq{\sigma}}_{r,s} : f\in \ep^{\seq{\theta}^{(n)}}_{p,q}(x_0)\} \subset \bigcup_{l\in \N} F_{l},
 \]
 for almost every $f\in B^{\seq{\sigma}}_{r,s}$, we have $h_{p,q}(x_0)\le \zeta^{-1}(-d/r)+1/n$, which leads to the conclusion.
\end{proof}

\bibliography{pbes}{}
\bibliographystyle{plain}

\end{document}